\documentclass[11pt]{amsart}

\usepackage{latexsym, fancyhdr, enumitem}
\usepackage{amsmath, amsfonts, amssymb}
\usepackage[english]{babel}
\usepackage{graphicx}
\usepackage[breaklinks=true]{hyperref}

\usepackage{fullpage}

\newtheorem{prop}{Proposition}[section]
\newtheorem{lem}[prop]{Lemma}
\newtheorem{fol}[prop]{Corollary}
\newtheorem{theo}[prop]{Theorem}

\theoremstyle{definition}
\newtheorem{defi}[prop]{Definition}

\theoremstyle{remark}
\newtheorem{rem}[prop]{Remark}
\newtheorem{remdef}[prop]{Definition and Remark}
\newtheorem{exmp}[prop]{Example}

\newcommand{\field}[1]{\mathbb{#1}}

\newcommand{\ZZ}{\field{Z}}

\newcommand{\RR}{\field{R}}
\newcommand{\NN}{\field{N}}

\newcommand{\KKK}{\mathcal{K}}
\newcommand{\PP}{\mathcal{P}}

\newcommand{\SSSS}{\mathcal{S}}
\newcommand{\AAA}{\mathcal{A}}

\DeclareMathOperator*{\codim}{\mathrm{codim}}

\DeclareMathOperator{\id}{\mathrm{id}}

\DeclareMathOperator{\supp}{\mathrm{supp}}

\DeclareMathOperator*{\Hom}{\mathrm{Hom}}

\DeclareMathOperator*{\GL}{\mathrm{GL}}

\DeclareMathOperator*{\Aut}{\mathrm{Aut}}

\DeclareMathOperator*{\Sym}{\mathrm{Sym}}

\DeclareMathOperator*{\Rad}{\mathrm{Rad}}

\DeclareMathOperator*{\Cham}{\mathrm{Cham}}

\newcommand{\ol}[1]{\overline{#1}}

\newcommand{\re }{^\mathrm{re}}
\newcommand{\rer }[1]{(R\re)^{#1}}
\newcommand{\rsC }{\mathcal{R}}
\newcommand{\rfl }{\rho }
\newcommand{\Cm }{C}

\newcommand{\Cc }{\mathcal{C}}
\newcommand{\cm }{c}
\newcommand{\s }{\sigma }
\newcommand{\Wg }{\mathcal{W}}
\newcommand{\Ob }{\mathrm{Ob}}

\title[On the Tits cone of a Weyl groupoid]
{On the Tits cone of a Weyl groupoid}

\author{M.~Cuntz}
\address{Michael Cuntz,
Institut f\"ur Algebra, Zahlentheorie und Diskrete Mathematik,
Fakult\"at f\"ur Mathematik und Physik,
Leibniz Universit\"at Hannover,
Welfengarten 1,
D-30167 Hannover, Germany}
\email{cuntz@math.uni-hannover.de}

\author{B.~M\"uhlherr}
\address{Bernhard M\"uhlherr,
Mathematisches Institut, Arndtstra{\ss}e 2, 35392 Gie{\ss}en, Germany}
\email{bernhard.muehlherr@math.uni-giessen.de}

\author{C.~J.~Weigel}
\address{Christian J. Weigel,
Mathematisches Institut, Arndtstra{\ss}e 2, 35392 Gie{\ss}en, Germany}
\email{christian.j.weigel@math.uni-giessen.de}

% MSC2010  : 20F55 (Primary) 17B22, 52C35 (Secondary)
% keywords : simplicial arrangement, Coxeter group, Tits cone, Weyl groupoid

\begin{document}

\begin{abstract}
We translate the axioms of a Weyl groupoid with (not necessarily finite) root system in terms of arrangements.
The result is a correspondence between Weyl groupoids permitting a root system and Tits arrangements satisfying an integrality condition which we call the crystallographic property.
\end{abstract}

\maketitle

\section{Introduction}

A large part of the (yet achieved) classification of finite dimensional Nichols algebras relies on a symmetry structure called the Weyl groupoid.
This was constructed first in \cite{p-H-06} for Nichols algebras of diagonal type, and then in \cite{p-AHS-08} in a very general setting.
Recently Heckenberger and Vendramin \cite{MR3605018}, \cite{MR3656477} exploited the Weyl groupoid of a Nichols algebra further and classified all finite dimensional Nichols algebras of semisimple Yetter-Drinfeld modules of rank greater than one over finite nonabelian groups, thus in other words those Nichols algebras of interest for the classification of pointed Hopf algebras which define a non trivial Weyl groupoid.

The fact that a Nichols algebra is finite dimensional translates into the finiteness of the sets of real roots of the corresponding Weyl grou\-poid. These `finite Weyl groupoids' were completely classified in a series of papers by Heckenberger and the first author culminating in \cite{p-CH10}.
If one is interested in classifying arbitrary Nichols algebras defining a Weyl groupoid with root system, it is thus very natural to understand Weyl groupoids admitting a root system in general, i.e.\ with possibly infinitely many roots.

In the course of the classification of finite Weyl groupoids it was observed that they correspond to so-called crystallographic arrangements, which was finally proven in \cite{Cu11}.
The main purpose of this paper is to generalize the result of \cite{Cu11} from finite to arbitrary Weyl groupoids permitting a root system:
a finite Weyl groupoid defines a (finite) simplicial arrangement, an arbitrary Weyl groupoid (with root system) defines a so-called Tits arrangement, see \cite{CMW} for a definition.
The integrality property called ``crystallographic'' in \cite{Cu11} may be transfered to Tits arrangements without trouble although one has to be very careful with the details. Thus our main result is divided into two parts: every crystallographic arrangement defines a Weyl groupoid (see Section \ref{sec_cryprop}), and every Weyl groupoid (with root system) defines a crystallographic arrangement (see Section \ref{geom.real}).

\begin{theo}[Cor.\ \ref{cor:cor}]
There exists a one-to-one correspondence between connected, simply connected Cartan graphs permitting a root system and crystallographic Tits arrangements with reduced root system.

Under this correspondence, equivalent Cartan graphs correspond to combinatorially equivalent Tits arrangements and vice versa, giving rise to a one-to-one correspondence between the respective equivalence classes.
\end{theo}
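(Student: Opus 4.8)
The plan is to assemble the corollary from the two constructions already established: Section~\ref{geom.real} assigns to a connected, simply connected Cartan graph $\mathcal{C}$ permitting a root system its geometric realization, a Tits arrangement $\mathcal{A}(\mathcal{C})$ that is crystallographic with reduced root system; call this assignment $\Phi$. Section~\ref{sec_cryprop} assigns to a crystallographic Tits arrangement $\mathcal{A}$ with reduced root system a Cartan graph $\mathcal{C}(\mathcal{A})$ that is connected and simply connected and permits a root system; call it $\Psi$. The corollary is the statement that $\Phi$ and $\Psi$ descend to mutually inverse bijections between the two sets of equivalence classes, so the work is to verify the two round trips and the compatibility with the respective equivalence relations.

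First I would check $\Psi \circ \Phi \simeq \id$. Starting from $\mathcal{C}$, the objects index the chambers of $\mathcal{A}(\mathcal{C})$, the walls bounding a chamber correspond to the simple roots at the associated object, and the Cartan entries are read back off the reflection formulas together with the crystallographic labels attached to the hyperplanes. The simply connected hypothesis is exactly what makes the object-to-chamber assignment a bijection: without it, $\mathcal{A}(\mathcal{C})$ would already be the realization of the universal cover of $\mathcal{C}$, and $\Psi$ would return that cover rather than $\mathcal{C}$ itself. For the opposite composite $\Phi \circ \Psi \simeq \id$, I would start from $\mathcal{A}$, form $\mathcal{C}(\mathcal{A})$, and realize it geometrically; here the reduced hypothesis guarantees that each hyperplane carries a single pair $\pm\alpha$ of roots, so that the normals reconstructed from $\mathcal{C}(\mathcal{A})$ recover exactly the hyperplanes of $\mathcal{A}$ and the resulting arrangement is combinatorially equivalent to $\mathcal{A}$.

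For the statement on equivalence classes I would argue that an equivalence of Cartan graphs is a relabelling of the simple roots at the objects together with an isomorphism of the underlying groupoid, and that under $\Phi$ such data induce a linear isomorphism carrying chambers, walls and crystallographic labels of one arrangement onto those of the other, i.e.\ a combinatorial equivalence; the same correspondence run through $\Psi$ gives the reverse implication. Combined with the two round-trip identities this yields the asserted bijection between equivalence classes. The step I expect to be the main obstacle is the bookkeeping in the two round trips: one must confirm that the base object, the reflections, and in particular the integrality of the recovered Cartan entries are preserved on the nose and not merely up to some coarser notion, and that the twin hypotheses ``reduced'' and ``simply connected'' are invoked at precisely the two places where the naive assignment would otherwise fail to be injective --- the non-uniqueness of a root on a hyperplane, and the collapse of the groupoid onto a proper cover.
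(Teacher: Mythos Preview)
Your proposal is correct and matches the paper's approach: the corollary is stated without explicit proof, being intended as an immediate consequence of the two constructions developed in Sections~\ref{sec_cryprop} and~\ref{geom.real} together with the remarks on equivalence that precede it. Your sketch simply makes explicit the two round-trip verifications and the descent to equivalence classes that the paper leaves implicit; the role you assign to simple connectedness (via Lemma~\ref{bKbbij}) and to reducedness (axiom~(R2)) is exactly how these hypotheses enter the constructions in the text.
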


As a result, most of the Nichols algebras define Tits cones. An approach to classify arbitrary Nichols algebras could now be to start with those algebras defining a nice cone. For example, if the Tits cone is a halfspace, we call the Nichols algebra `affine'.
A first classification result of affine Nichols algebras of diagonal type is \cite{C16}.

This paper is organized as follows.
Section \ref{sec_titsarr} recalls the relevant notions introduced in \cite{CMW}.
In Section \ref{sec_cryprop}, we discuss the crystallographic property and how to obtain a Weyl groupoid from a crystallographic arrangement. The geometric realization of a connected simply connected Weyl groupoid is given in Section \ref{geom.real}. Section \ref{subarr} discusses arrangements induced by crystallographic arrangements by subspaces.

\medskip

\textbf{Acknowledgement.}
Some of the results and ideas of this note were achieved during a Mini-Workshop on Nichols algebras and Weyl groupoids at the Mathematisches Forschungsinstitut Oberwolfach in October 2012, and during meetings in Gie{\ss}en, Hannover, and Kaiserslautern supported by the Deutsche Forschungsgemeinschaft within the priority programme 1388.

\section{Tits arrangements}\label{sec_titsarr}

\subsection{Hyperplane arrangements}

In this section we recall the definitions and properties regarding hyperplane arrangements and Tits arrangements. For some basic examples and the proofs of the statements see \cite{CMW}. Our notation follows this paper as well.

Note that all topological notations we use refer to the standard topology in $\RR^r$, in particular for $X \subset \RR^r$ we denote by $\ol{X}$ the closure of $X$.

\begin{defi}
Let $\AAA$ be a set of linear hyperplanes in $V = \RR^r$, and $T$ an open convex cone in $V$. We say that \textit{$\AAA$ is locally finite in $T$} if for every $x \in T$ there exists a neighbourhood $U_x \subset T$ of $x$, such that $\{H \in \AAA \mid H \cap U_x \neq \emptyset\}$ is a finite set.

A \textit{hyperplane arrangement (of rank $r$)} is a pair $(\AAA, T)$, where $T$ is a convex open cone in $V$, and $\AAA$ is a (possibly infinite) set of linear hyperplanes such that 
\begin{enumerate}
	\item $H \cap T \neq \emptyset$ for all $H \in \AAA$,
	\item $\AAA$ is locally finite in $T$.
\end{enumerate}
If $T$ is unambiguous from the context, we also call the set $\AAA$ a hyperplane arrangement.

Let $X \subset \ol{T}$. Then the \textit{localisation at $X$ (in $\AAA$)} is defined as
$$\AAA_X := \{H \in \AAA \mid X \subset H\}.$$
If $X= \{x\}$ is a singleton, we write $\AAA_x$ instead of $\AAA_{\{x\}}$ and call $(\AAA_x,T)$ the \textit{parabolic subarrangement at $x$}. A \textit{parabolic subarrangement of $(\AAA,T)$} is a subarrangement $(\AAA',T)$, such that $\AAA' = \AAA_x$ for some $x \in \ol{T}$. For the purpose of this paper we call the set $$\sec_\AAA(X) := \bigcup_{x \in X} \AAA_x = \{H \in \AAA \mid H \cap X \neq \emptyset\}$$ the \textit{section of $X$ (in $\AAA$)}. We will omit $\AAA$ when there is no danger of confusion.

The \textit{support of $X$ (in $\AAA$)} is defined to be the subspace $\supp_\AAA(X) = \bigcap_{H \in \AAA_X} H$.

The connected components of $T \setminus \bigcup_{H \in \AAA} H$ are called \textit{chambers}, denoted with $\mathcal{K}(\AAA,T)$ or just $\mathcal{K}$, if $(\AAA,T)$ is unambiguous.

Let $K \in \KKK(\AAA,T)$. Define the \textit{walls of $K$} to be the elements of
$$W^K := \{H \leq V \mid H \text{ hyperplane, } \langle H \cap \overline{K} \rangle = H, H \cap K^\circ = \emptyset\}.$$

The \textit{radical of $\AAA$} is the subspace $\Rad(\AAA) := \bigcap_{H \in \AAA} H = \supp_\AAA(0)$. We call the arrangement \textit{non-degenerate} if $\Rad(\AAA) = 0$, and \textit{degenerate} otherwise. A hyperplane arrangement is \textit{thin} if $W^K \subset \AAA$ for all $K \in \KKK$.
\end{defi}

We recall a basic consequence of the notion of local finiteness.

\begin{lem}[{\cite[Lemma 2.3]{CMW}}]
Let $(\AAA, T)$ be a hyperplane arrangement. Then for every point $x \in T$ there exists a neighbourhood $U_x$ such that $\AAA_x = \sec(U_x)$.
Furthermore the set $\sec(X)$ is finite for every compact set $X \subset T$.
\label{compfin}
\end{lem}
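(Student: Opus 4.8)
The plan is to establish the two assertions in turn, deriving the compactness statement from the first by a covering argument. Throughout I would work with open balls $B_\varepsilon(x) = \{\, y \in V \mid \norm{y-x} < \varepsilon \,\}$ as neighbourhoods, using that $T$ is open so that $B_\varepsilon(x) \subset T$ for all sufficiently small $\varepsilon$, and relying on the Euclidean metric $d(x,H) = \inf_{y \in H}\norm{x-y}$ of the standard topology. A useful preliminary observation is that $\AAA_x \subseteq \sec_\AAA(U)$ holds for \emph{every} neighbourhood $U$ of $x$: if $x \in H$ then $x \in H \cap U$, so $H \in \sec_\AAA(U)$. Thus only the reverse inclusion will require work.

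For the first assertion I would fix $x \in T$ and invoke local finiteness to obtain $\delta > 0$ with $B_\delta(x) \subset T$ and $\sec_\AAA(B_\delta(x))$ finite (this set is exactly $\{H \in \AAA \mid H \cap B_\delta(x) \neq \emptyset\}$). The task then reduces to shrinking $B_\delta(x)$ so as to discard precisely the hyperplanes meeting it that miss $x$. To this end I would consider the finite set $\cF := \sec_\AAA(B_\delta(x)) \setminus \AAA_x$. Each $H \in \cF$ is closed with $x \notin H$, so $d(x,H) > 0$; as $\cF$ is finite I may pick $\varepsilon$ with $0 < \varepsilon < \delta$ and $\varepsilon < d(x,H)$ for all $H \in \cF$ (any $\varepsilon \in (0,\delta)$ if $\cF = \emptyset$). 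Setting $U_x := B_\varepsilon(x)$, the inclusion $U_x \subseteq B_\delta(x)$ gives $\sec_\AAA(U_x) \subseteq \sec_\AAA(B_\delta(x))$, while no $H \in \cF$ can meet $U_x$, since a point of $H \cap U_x$ would force $d(x,H) < \varepsilon$. Hence $\sec_\AAA(U_x) \subseteq \sec_\AAA(B_\delta(x)) \setminus \cF = \AAA_x$, and with the preliminary inclusion this yields $\AAA_x = \sec_\AAA(U_x)$. I expect this shrinking step to be the only point of real substance; its success hinges entirely on local finiteness, which guarantees that just finitely many hyperplanes — each bounded away from $x$ — must be avoided.

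For the second assertion I would take a compact $X \subset T$ and apply the first part: every $x \in X$ admits an open neighbourhood $U_x \subset T$ with $\sec_\AAA(U_x) = \AAA_x$ finite. The family $(U_x)_{x \in X}$ covers $X$, so compactness yields a finite subcover $U_{x_1}, \dots, U_{x_n}$. Since any hyperplane meeting $X$ meets at least one $U_{x_i}$, I obtain $\sec_\AAA(X) \subseteq \bigcup_{i=1}^{n} \sec_\AAA(U_{x_i})$, a finite union of finite sets, whence $\sec_\AAA(X)$ is finite.
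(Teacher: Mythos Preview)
Your argument is correct. The paper itself does not supply a proof of this lemma: it is quoted verbatim from \cite[Lemma~2.3]{CMW} and used as a black box, so there is no in-paper proof to compare against. That said, your two-step approach---shrink a locally-finite neighbourhood to exclude the finitely many hyperplanes missing $x$, then pass to a finite subcover for compact $X$---is the standard way to establish such a statement and is exactly what one would expect the cited proof to do.
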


\subsection{Simplicial cones and Tits arrangements}

\begin{defi}
In the following let $V = \RR^r$, $r \geq 1$. For a linear form $\alpha \in V^\ast$, define
\begin{align*}
\alpha^\perp &:= \ker \alpha,\\
\alpha^+ &:= \alpha^{-1}(\RR_{>0}),\\
\alpha^- &:= \alpha^{-1}(\RR_{<0}).
\end{align*}
Let $B$ be a basis of $V^\ast$, then the \textit{open simplicial cone (associated to $B$)} is 
$$
K^B := \bigcap_{\alpha \in B} \alpha^+.
$$
\end{defi}

\begin{remdef}
With notation as above we find
\begin{align*}
\ol{\alpha^+} &= \alpha^{-1}(\RR_{\geq 0}) = \alpha^\perp \cup \alpha^+,\\
\ol{\alpha^-} &= \alpha^{-1}(\RR_{\leq 0}) = \alpha^\perp \cup \alpha^-.
\end{align*}
Let $B$ be a basis of $V^\ast$. We can then define the \textit{closed simplicial cone (associated to $B$)} as 
$$
\ol{K^B} = \bigcap_{\alpha \in B}\ol{\alpha^+}.
$$

We call a cone \textit{simplicial} if it is open simplicial or closed simplicial.

A simplicial cone can also be defined using bases of $V$. Let $C$ be a basis of $V$, then the open simplicial cone associated to $C$ is 
$$
K^C = \{\sum_{v \in C}\lambda_v v \mid \lambda_v >0\}
$$
and the closed simplicial cone associated to $C$ is 
$$
\ol{K^C} = \{\sum_{v \in C}\lambda_v v \mid \lambda_v \geq 0\}.
$$

Both concepts are equivalent, and it is immediate from the definition that if $B \subset V^\ast$ and $C \subset V$ are bases, then $K^B = K^C$ if and only if $B$ is, up to positive scalar multiples and permutation, dual to $C$.

A simplicial cone $K$ associated to $B$ carries a natural structure of a simplex, to be precise:

$$\SSSS_K := \{\ol{K} \cap \bigcap_{\alpha \in B'} \alpha^\perp \mid B' \subset B\}$$
is a poset with respect to inclusion, which is isomorphic to $\PP(B)$ with inverse inclusion. If $C$ is the basis of $V$ dual to $B$, we find $\SSSS_K$ to be the set of all convex combinations of subsets of $C$, and $\SSSS_K$ is also isomorphic to $\PP(C)$. Moreover, $\{\RR_{\geq 0}c \mid c \in C\}$ is the vertex set of the simplex $\SSSS_K$.

For a simplicial cone $K$, we denote with $B_K \subset V^\ast$ a basis of $V^\ast$ such that $K^{B_K} = K$. The basis $B_K$ is uniquely determined by $K$ up to permutation and positive scalar multiples.
\label{rem:scs}
\end{remdef}

\begin{lem}
Let $K = K^C = K^B$ for a basis $C$ of $V$ and a basis $B$ of $V^\ast$. Let $\beta \in V^\ast$. Then $\beta(v) \geq 0$ for all $v \in C$ if and only if $\beta \in \sum_{\alpha \in B} \RR_{\geq 0} \alpha$. Likewise $\beta(v) \leq 0$ for all $v \in C$ if and only if $\beta \in -\sum_{\alpha \in B} \RR_{\geq 0} \alpha$.\label{purelc}
\end{lem}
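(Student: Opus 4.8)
The plan is to reduce the whole statement to a sign computation in a suitably normalised dual basis. By Remark \ref{rem:scs}, the hypothesis $K^B = K^C$ forces $B$ to be dual to $C$ up to permutation and positive scalar multiples. Concretely, writing $C = \{c_1, \dots, c_r\}$ I would relabel $B = \{\alpha_1, \dots, \alpha_r\}$ so that $\alpha_i(c_j) = \mu_i \delta_{ij}$ for suitable scalars $\mu_i > 0$. This is the only place where the assumption $K = K^C = K^B$ is used, and it is the structural heart of the argument; everything afterwards is elementary linear algebra.

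Next I would expand an arbitrary $\beta \in V^\ast$ in the basis $B$, say $\beta = \sum_{i=1}^r \lambda_i \alpha_i$ with uniquely determined $\lambda_i \in \RR$. Evaluating on the vectors of $C$ and using $\alpha_i(c_j) = \mu_i \delta_{ij}$ yields $\beta(c_j) = \lambda_j \mu_j$ for each $j$. Since each $\mu_j$ is strictly positive, the sign of $\beta(c_j)$ coincides with the sign of $\lambda_j$; this decoupling is exactly what makes the two conditions equivalent coordinate by coordinate.

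Both implications then follow at once. If $\beta(v) \geq 0$ for all $v \in C$, then $\lambda_j \mu_j = \beta(c_j) \geq 0$, whence $\lambda_j \geq 0$ for every $j$, so $\beta \in \sum_{\alpha \in B} \RR_{\geq 0}\alpha$. Conversely, if $\beta = \sum_i \lambda_i \alpha_i$ with all $\lambda_i \geq 0$, then $\beta(c_j) = \lambda_j \mu_j \geq 0$ for every $c_j$, i.e.\ $\beta(v) \geq 0$ for all $v \in C$. The assertion about $\beta(v) \leq 0$ follows by applying the equivalence just proven to $-\beta$, noting that $-\sum_{\alpha \in B}\RR_{\geq 0}\alpha = \{-\gamma \mid \gamma \in \sum_{\alpha \in B}\RR_{\geq 0}\alpha\}$. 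I expect no genuine obstacle here; the only point requiring care is tracking the positive rescaling factors $\mu_i$ and confirming that they leave the sign conditions unaffected.
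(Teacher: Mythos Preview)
Your proof is correct. The paper states Lemma~\ref{purelc} without proof, presumably regarding it as an immediate consequence of the duality observation in Definition and Remark~\ref{rem:scs}; your argument spells out precisely that consequence, and there is nothing to add or compare.
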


\begin{defi}
Let $T \subseteq V$ be a convex open cone and $\AAA$ a set of linear hyperplanes in $V$. We call a hyperplane arrangement $(\AAA, T)$ a \textit{simplicial arrangement (of rank $r$)}, if every $K \in \mathcal{K}(\AAA)$ is an open simplicial cone.

The cone $T$ is the \textit{Tits cone} of the arrangement. A simplicial arrangement is a \textit{Tits arrangement} if it is thin.
\end{defi}

\begin{defi}
A simplicial arrangement $(\AAA,T)$ is \textit{spherical} if $T = \RR^r$. We say $(\AAA,T)$ is \textit{affine} if $T = \gamma^+$ for some $0 \neq \gamma \in V^\ast$. For an affine arrangement we call $\gamma$ an \textit{imaginary root} of the arrangement.
\end{defi}

\subsection{Root systems}

\begin{defi}
Let $V = \RR^r$, a \textit{root system} is a set $R \subset V^\ast$ such that
\begin{enumerate}[label=\arabic*)]
  \item $0 \notin R$,
	\item $-\alpha \in R$ for all $\alpha \in R$,
	\item there exists a Tits arrangement $(\AAA,T)$ such that $\AAA = \{\alpha^\perp \mid \alpha \in R\}$. 
\end{enumerate}
If $R$ is a root system and $(\AAA,T)$ as in 3), we say that the Tits arrangement $(\AAA, T)$ is \textit{associated to the root system $R$}.
We call a root system $R$ \textit{reduced} if $R \cap \langle \alpha \rangle = \{\pm\alpha\}$ for all $\alpha\in R$.
\label{def:rs}
\end{defi}

\begin{rem}
Most of the time we consider reduced root systems in this paper. However, root systems which are not reduced appear naturally when considering restrictions in Section \ref{subarr}.
\end{rem}

\begin{defi}
Let $(\AAA, T)$ be a Tits arrangement associated to $R$. Let $K$ be a chamber. The \textit{root basis of $K$} is the set 
$$B^K := \{\alpha \in R \mid \alpha^\perp \in W^K, \alpha(x) > 0 \text{ for all } x \in K\}.$$
\end{defi}

\begin{rem}
If $(A,T)$ is a Tits arrangement associated to $R$ and $K \in \KKK$, then
$$
W^K = \{\alpha^\perp \mid \alpha \in B^K\}.
$$

Also, as a simplicial cone $K \subset \RR^r$ has exactly $r$ walls, the set $B^K$ is a basis of $V^\ast$. Notice that $K^{B^K} = K$.
\end{rem}

\begin{lem}[{\cite[Lemma 3.16]{CMW}}]
Let $(\AAA,T)$ be a Tits arrangement associated to $R$, $K$ a chamber. Then $R \subset \pm \sum_{\alpha \in B^K} \RR_{\geq 0}\alpha$. In other words, every root is a non-negative or non-positive linear combination of $B^K$.
\label{posorneg}
\end{lem}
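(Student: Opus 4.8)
The plan is to reduce the statement to Lemma \ref{purelc} by showing that each root has constant sign on the chamber $K$. First I would recall from the preceding remark that $K = K^{B^K}$, so if $C \subset V$ denotes the basis of $V$ dual to $B^K$ (the vertex directions of $K$), then $K = K^C$ and $\ol{K} = \{\sum_{v \in C} \lambda_v v \mid \lambda_v \geq 0\}$; in particular every $c \in C$ lies in $\ol{K}$. This sets up exactly the situation $K = K^C = K^B$ with $B = B^K$ required to invoke Lemma \ref{purelc}.

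Next, fix a root $\beta \in R$. Since $(\AAA, T)$ is associated to $R$, the hyperplane $\beta^\perp$ belongs to $\AAA$, and because $K$ is by definition a connected component of $T \setminus \bigcup_{H \in \AAA} H$, we have $K \cap \beta^\perp = \emptyset$. Thus $\beta$ never vanishes on the connected set $K$, so by continuity it has a single strict sign there: either $\beta(x) > 0$ for all $x \in K$, or $\beta(x) < 0$ for all $x \in K$.

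Suppose $\beta > 0$ on $K$; the other case is identical after replacing $\beta$ by $-\beta$, which again lies in $R$ by Definition \ref{def:rs}. Since $\beta$ is a (continuous) linear form, the weak inequality $\beta \geq 0$ then holds on the closure $\ol{K}$, and in particular $\beta(c) \geq 0$ for every $c \in C$. Lemma \ref{purelc}, applied with $B = B^K$, now yields $\beta \in \sum_{\alpha \in B^K} \RR_{\geq 0}\, \alpha$; symmetrically, the case $\beta < 0$ gives $\beta \in -\sum_{\alpha \in B^K} \RR_{\geq 0}\, \alpha$. As $\beta$ was arbitrary, this proves $R \subset \pm \sum_{\alpha \in B^K} \RR_{\geq 0}\, \alpha$.

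I do not expect a serious obstacle in this argument; the only point that requires a little care is the passage from \emph{constant strict sign on the open chamber $K$} to \emph{a weak sign condition on the vertices $c \in C$}, obtained by taking closures, and the subsequent translation of that sign condition into a conic membership statement, which is precisely the bridge that Lemma \ref{purelc} is designed to cross. The essential geometric input, namely that a chamber meets no hyperplane of the arrangement, is immediate from the definition of chamber, and the local-finiteness machinery plays no role in this particular statement.
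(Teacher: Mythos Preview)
Your argument is correct. The paper does not supply its own proof of this lemma---it is quoted verbatim from \cite{CMW} as Lemma~3.16 there---so there is nothing in the present paper to compare against; that said, your approach (constant sign of $\beta$ on the connected chamber $K$, pass to the closure, then invoke Lemma~\ref{purelc}) is exactly the natural one and is essentially how such a statement is proved in the cited source.
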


\begin{defi}
Let $(\AAA,T)$, $(\AAA', T')$ be Tits arrangements associated to $R$,$R'$. Then $(\AAA,T)$ and $(\AAA',T')$ are called \textit{combinatorially equivalent} if there exists an $g \in \GL(V)$ such that $g(\AAA) = \AAA'$, $g\ast R = R'$, $g(T) = T'$. Here $\ast$ denotes the dual action of $GL(V)$ on $V^\ast$, defined by $g \ast \alpha = \alpha \circ g^{-1}$.
\end{defi}

\begin{remdef}
Let $(\AAA, T)$ be a simplicial hyperplane arrangement. The set of chambers $\mathcal{K}$ gives rise to a poset
$$
\mathcal{S}(\AAA,T) := \left\{\ol{K} \cap \bigcap_{H \in \AAA'}H \mid K \in \KKK, \AAA' \subseteq W^K\right\} = \bigcup_{K \in \KKK}\SSSS_K,$$
with set-wise inclusion giving a poset-structure. Note that we do not require any of these intersections to be in $T$. By construction they are contained in the closure of $T$, as every $K$ is an open subset in $T$. We write $\mathcal{S}$ instead of $\mathcal{S}(\AAA,T)$ if $(\AAA, T)$ is uniquely determined from the context.
\end{remdef}

We recall the properties of $\SSSS$ from \cite{CMW}. Note that we will not recall definitions and properties of simplicial complexes in this paper. As a reference please consult the Appendix of \cite{CMW}.

\begin{prop}[{\cite[Proposition 3.26]{CMW}}]
Let $(\AAA,T)$ be a simplicial arrangement. The complex $\SSSS: = \SSSS(\AAA,T)$ is a chamber complex of rank $r$ with
$$\Cham(\SSSS) = \{\ol{K} \mid K \in \KKK\}.$$
The complex $\SSSS$ is gated and strongly connected. Furthermore there exists a type function $\tau: \SSSS \to I$ of $\SSSS$, where $I = \{1, \dots, r\}$. The complex $\SSSS$ is thin if and only if $(\AAA, T)$ is thin, and $\SSSS$ is spherical if and only if $(\AAA,T)$ is spherical.
\label{prop-Sproperties}
\end{prop}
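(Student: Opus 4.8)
The plan is to build the complex from its local pieces and then verify the global properties in order of increasing difficulty. First I would record the local structure: by Remark and Definition \ref{rem:scs}, each $\SSSS_K$ is isomorphic to $\PP(B_K)$ with reverse inclusion, i.e. an $(r-1)$-dimensional simplex whose $r$ vertices are the rays $\RR_{\geq 0}c$ with $c$ in the basis $C_K$ of $V$ dual to $B_K$. Thus every $\ol K$ is a maximal face of rank $r$. To see that $\SSSS = \bigcup_{K}\SSSS_K$ is an abstract simplicial complex I would show that for chambers $K, K'$ the intersection $\ol K \cap \ol{K'}$ is a common face of $\SSSS_K$ and $\SSSS_{K'}$. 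This is where Lemma \ref{posorneg} and Lemma \ref{purelc} enter: relative to $B^K$ every root is sign-definite, so the set of walls separating $K$ from $K'$ is well defined, and $\ol K \cap \ol{K'} = \ol K \cap \bigcap_{H} H$ with $H$ ranging over these separating walls, which is a face of both. Downward closure of faces is immediate from the definition, and the maximal faces are exactly the $\ol K$, all of rank $r$, giving $\Cham(\SSSS) = \{\ol K \mid K \in \KKK\}$.

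Next I would establish gallery connectedness, upgrading $\SSSS$ to a chamber complex. Given chambers $K, K'$, pick interior points $x \in K$, $x' \in K'$; since $T$ is convex and open the segment $[x,x']$ lies in $T$, and after a generic perturbation it avoids all codimension-$2$ faces and crosses the hyperplanes of $\AAA$ transversally and one at a time. By Lemma \ref{compfin} the section of the compact set $[x,x']$ is finite, so the segment crosses only finitely many hyperplanes; reading off the chambers traversed yields a finite gallery in which consecutive chambers share a panel. For strong connectedness I would argue that the residue of a face $F$ is governed by the localisation $\AAA_{F}$, which descends to a finite central, hence spherical, simplicial arrangement on $V/\supp_\AAA(F)$; applying the same segment argument inside this localisation, or inducting on the rank, shows every residue is gallery connected.

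The substantive combinatorial content lies in gatedness and the type function, which I expect to be the main obstacle. For gatedness, given a chamber $K$ and the residue of a face $F$, I would define the gate as the chamber of the residue lying on the same side as $K$ of every hyperplane of $\AAA$ not containing $F$ (existence and uniqueness follow from local finiteness together with the sign description of Lemma \ref{posorneg}), and then verify that it lies on a minimal gallery from $K$ to every chamber of the residue by a hyperplane-counting argument: a separating hyperplane can never be avoided, and the gate crosses no superfluous ones. For the type function $\tau \colon \SSSS \to I$ I would fix a base chamber, label its $r$ vertices bijectively by $I$, and propagate the labelling along galleries; independence of the chosen gallery follows because $T$ is convex, hence $\SSSS$ is simply connected, so any two galleries with the same endpoints are connected by elementary homotopies across codimension-$2$ faces, across which the labelling is manifestly preserved. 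Compatibility of $\tau$ with the simplicial structure is then local and immediate.

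Finally the two equivalences are comparatively direct. For thinness, a panel $P$ of $\ol K$ spans a unique hyperplane $H = \langle P \rangle \in W^K$; by local finiteness there is a chamber on the opposite side of $H$ having $P$ as a panel precisely when $H \in \AAA$, so the condition ``$W^K \subseteq \AAA$ for all $K$'' is equivalent to ``every panel lies in exactly two chambers'', i.e. $\SSSS$ is thin iff $(\AAA,T)$ is thin. For sphericity, if $T = \RR^r$ then, since every $H \in \AAA$ is linear and hence passes through $0$, local finiteness at $0$ forces $\AAA$ to be finite; the resulting finite central arrangement triangulates the sphere, so $\SSSS$ is spherical, and conversely a spherical, that is finite, complex cannot leave a proper subcone as Tits cone, forcing $T = \RR^r$.
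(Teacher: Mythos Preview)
The paper does not give a proof of this proposition: it is quoted from \cite[Proposition~3.26]{CMW} without argument, and the supporting material on chamber complexes, gatedness, and type functions is likewise deferred to the appendix of \cite{CMW} (see for instance the later reference to \cite[Theorem~B15]{CMW}). There is thus no proof in the present paper to compare your proposal against.

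As a standalone sketch your outline is reasonable and follows the standard route. One technical slip: you appeal to Lemma~\ref{posorneg} (and implicitly to root bases $B^K$) when arguing that $\ol K \cap \ol{K'}$ is a common face, but that lemma is stated only for Tits arrangements associated to a root system, whereas Proposition~\ref{prop-Sproperties} concerns arbitrary simplicial arrangements, which need not be thin and hence need not admit a root system in the sense of Definition~\ref{def:rs}. The geometric fact you actually need --- that no $H \in \AAA$ meets any chamber, and that the intersection of two closed chambers is a common face of both --- should be argued directly from the definition of chambers as connected components of $T \setminus \bigcup_{H \in \AAA} H$ and from the structure of simplicial cones, without invoking roots. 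The remaining steps (the segment argument for gallery connectedness via Lemma~\ref{compfin}, localisation for strong connectedness, hyperplane counting for gatedness, label propagation plus simple connectedness of $T$ for the type function, and the two final equivalences) are the expected ones.
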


\begin{defi}
A simplicial arrangement $(\AAA,T)$ of rank $r$ is called \textit{$k$-spherical} for $k \in \NN_0$ if every simplex $S$ of $\mathcal{S}$, such that $\codim(S) = k$, meets $T$. We say $(\AAA, T)$ is \textit{locally spherical} if it is $r-1$-spherical.
\label{def:spherical}
\end{defi}

\begin{rem}
An equivalent condition for $(\AAA,T)$ to be $k$-spherical, which we will use often, is that every $(r-k-1)$-simplex meets $T$. This uses just the fact that simplices of codimension $k$ are exactly $(r-k-1)$-simplices.
\end{rem}

\section{The crystallographic property}\label{sec_cryprop}

\subsection{Crystallographic arrangements}

With respect to Lemma \ref{posorneg} we can make the following definition:

\begin{defi}
Let $(\AAA,T)$ be a Tits arrangement associated to $R$. We call $(\AAA,T)$ \textit{crystallographic (with respect to $R$)} if it satisfies
$$
R \subset \pm\sum_{\alpha \in B^K} \NN_0 \alpha
$$
for all $K \in \KKK$.
\label{defcry}
\end{defi}

From now on, let $(\AAA,T)$ be a crystallographic Tits arrangement with respect to $R$.

We will now take a closer look at the relations between the bases of adjacent chambers. The proof of the following lemma is exactly as in the spherical case.

\begin{lem}[{\cite[Lemma 2.8]{Cu11}}]
Let $K,L \in \KKK$ be adjacent chambers. Assume $\ol{K} \cap \ol{L} \subset \alpha_1^\perp$ for $\alpha_1 \in B^K$.
Then
\[ B^L \subseteq \{-\alpha_1\} \cup \sum_{\alpha \in B^K}\NN_0 \alpha. \]
\label{adjcham1}
\end{lem}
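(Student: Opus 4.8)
The plan is to evaluate the elements of $B^L$ at an interior point of the shared wall and read off the sign forced by the crystallographic property. To set up, write $B^K = \{\alpha_1,\dots,\alpha_r\}$ and let $C=\{v_1,\dots,v_r\}$ be the dual basis of $V$, so that $\alpha_i(v_j)=\delta_{ij}$ and $\ol{K}=\{\sum_i\lambda_i v_i : \lambda_i\ge 0\}$ by Definition and Remark~\ref{rem:scs}. Since $\alpha_1^\perp\in W^K$ and $K,L$ are adjacent across it, their common panel is the full facet $F:=\ol K\cap\ol L=\ol K\cap\alpha_1^\perp$, whose relative interior consists of the points $p=\sum_{i=2}^r\lambda_i v_i$ with all $\lambda_i>0$; any such $p$ satisfies $\alpha_1(p)=0$ and $\alpha_j(p)=\lambda_j>0$ for $j\ge 2$. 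As $L$ lies on the side $\alpha_1<0$ of the wall $\alpha_1^\perp$, I also record that $\alpha_1<0$ on $L$.

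Next I would fix an arbitrary $\beta\in B^L$. Since $\beta>0$ on the open chamber $L$, continuity gives $\beta\ge 0$ on $\ol L\supseteq F$, hence $\beta(p)\ge 0$ for every $p$ in the relative interior of $F$. By Lemma~\ref{posorneg} together with the crystallographic property (Definition~\ref{defcry}), I may write $\beta=\sum_i c_i\alpha_i$ with $c_i\in\ZZ$ and either all $c_i\ge 0$ or all $c_i\le 0$. If all $c_i\ge 0$, then $\beta\in\sum_{\alpha\in B^K}\NN_0\alpha$ and $\beta$ already lies in the target set. If all $c_i\le 0$, then evaluating at $p$ yields $0\le\beta(p)=\sum_{i=2}^r c_i\lambda_i\le 0$ for all positive $\lambda_i$, which forces $c_i=0$ for every $i\ge 2$. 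Thus $\beta=c_1\alpha_1$ with $c_1\le -1$ (it is nonzero, and the sign is consistent with $\alpha_1<0$ on $L$); since the root system is reduced, $R\cap\langle\alpha_1\rangle=\{\pm\alpha_1\}$ forces $c_1=-1$, i.e. $\beta=-\alpha_1$. In either case $\beta\in\{-\alpha_1\}\cup\sum_{\alpha\in B^K}\NN_0\alpha$, which is the claim.

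I expect the crux to be the case where all $c_i\le 0$. The essential geometric input is that $F$ is the \emph{entire} facet $\ol K\cap\alpha_1^\perp$, so that an interior point has all coordinates $\lambda_2,\dots,\lambda_r$ strictly positive; this is precisely what makes the inequality $\beta(p)\ge 0$ collapse the support of $\beta$ onto $\alpha_1$. Once that is done, reducedness of $R$ is the only extra ingredient needed to upgrade a general negative multiple of $\alpha_1$ to $-\alpha_1$ itself, and care is required there since without reducedness one could only conclude $\beta\in -\NN_{\ge 1}\alpha_1$.
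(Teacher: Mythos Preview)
Your proof is correct and follows the standard argument that the paper defers to \cite[Lemma~2.8]{Cu11}: write $\beta\in B^L$ as a sign-consistent integral combination of $B^K$ via the crystallographic property, then evaluate at a relative interior point of the shared facet $\ol K\cap\alpha_1^\perp$ to kill the coefficients $c_2,\dots,c_r$ in the non-positive case. The only remark worth making is that your appeal to reducedness to force $c_1=-1$ can equivalently be phrased as: $-\alpha_1\in B^L$ (since $\alpha_1^\perp\in W^L$ and $-\alpha_1>0$ on $L$), and $B^L$ is a basis of $V^\ast$, so the linearly dependent elements $\beta=c_1\alpha_1$ and $-\alpha_1$ of $B^L$ must coincide; this is how the spherical argument is usually stated, but it amounts to the same hypothesis.
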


We recall the notion of compatible indexing of root bases, compare \cite[Def.\ and Rem.\ 3.28]{CMW}.

\begin{remdef}
Assume for $K \in \KKK$ that $B^K$ is indexed in some way, i.e. $B^K = \{\alpha_1, \dots, \alpha_r\}$.
For any set $I$, define the map $\kappa_I: \mathcal{P}(I) \to \mathcal{P}(I)$ by $\kappa(J) = I \setminus J$. Set $\kappa := \kappa_{\{1, \dots, r\}}$.

For every simplex $F \subset \ol{K}$ there exists a description of the form $F = \ol{K} \cap \bigcap_{\alpha \in B_F}\alpha^\perp$ for some $B_F \subset B^K$ by Remark \ref{rem:scs}, which gives an index set $J_F = \{i \mid \alpha_i \in B_F\}$.

This gives rise to a type function of $\ol{K}$ in $\mathcal{S}$, by taking the map $\tau_{\ol{K}}: F \mapsto \kappa(J_F)$. By \cite[Theorem B15]{CMW} the map $\tau_{\ol{F}}$ yields a unique type function $\tau$ of the whole simplicial complex $\SSSS$. So let $L \in \KKK$ be another chamber, then the restriction $\tau|_{\ol{L}}$ is a type function of $\ol{L}$ as well. Assume $B^L = \{\beta_1, \dots, \beta_r\}$, this yields a second type function of $\ol{L}$ in the same way we acquired a type function of $\ol{K}$ before,
$$\tau_{\ol{L}}: F \mapsto \kappa(\{i \mid F \subset \beta_i^\perp\}).
$$
We now call the indexing of $B^L$ \textit{compatible with $B^K$}, if $\tau_{\ol{L}} = \tau|_{\ol{L}}$.

Since the type function $\tau$ is unique, there is a unique indexing of $B^L$ compatible with $B^K$. 
\end{remdef}

Proposition \ref{adjcham2} can also be found in \cite{Cu11}. The argument used there can be modified by the following Lemma, which makes use of compatibility and can be found in \cite{CMW}.

\begin{lem}[{\cite[Lemma 3.31]{CMW}}]
Assume that $(\AAA, T)$ is a Tits arrangement associated to $R$. Let $K, L \in \KKK$ be adjacent chambers and choose an indexing $B^K = \{\alpha_1, \dots, \alpha_r\}$. Let the indexing of $B^L = \{\beta_1, \dots, \beta_r\}$ be compatible with $B^K$. Assume $\ol{K} \cap \ol{L} \subset \alpha_k^\perp$ for some $1 \leq k \leq r$. Then $\beta_i \in \langle \alpha_i, \alpha_k \rangle$.
\label{adjcham15}
\end{lem}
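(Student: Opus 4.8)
The plan is to run everything through the uniqueness of the type function $\tau$ on $\SSSS$ (Proposition \ref{prop-Sproperties}) together with the explicit combinatorial description of the faces of a simplicial cone from Remark and Definition \ref{rem:scs}. Write $F := \ol{K}\cap\ol{L}$ for the common panel of the two adjacent chambers; since $F\subset\alpha_k^\perp$ and $F$ is a panel of $K$, it is the panel $\ol{K}\cap\alpha_k^\perp$ and $\langle F\rangle = \alpha_k^\perp$. Viewed inside $\ol K$ with the indexing $B^K=\{\alpha_1,\dots,\alpha_r\}$, the face $F$ has type $\tau(F)=\kappa(\{k\})=\{1,\dots,r\}\setminus\{k\}$. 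The compatibility hypothesis means precisely that the type function $\tau_{\ol L}$ built from $B^L$ coincides with the restriction of the global $\tau$ to $\ol L$, so I can read off types of shared faces indifferently from either chamber.

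First I would dispose of the case $i=k$. The unique panel of $L$ of type $\{1,\dots,r\}\setminus\{k\}$ is, by the description of $\tau_{\ol L}$, the one lying in $\beta_k^\perp$; but that panel is $F$ itself. Hence $\langle F\rangle=\beta_k^\perp$, so $\alpha_k^\perp=\beta_k^\perp$ and $\beta_k\in\langle\alpha_k\rangle\subseteq\langle\alpha_i,\alpha_k\rangle$.

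Now fix $i\neq k$ and consider the codimension-two face $G:=\ol{K}\cap\alpha_i^\perp\cap\alpha_k^\perp$. Using the basis $C$ of $V$ dual to $B^K$, Remark and Definition \ref{rem:scs} identifies $G$ with the sub-simplex spanned by $\{c_j\mid j\neq i,k\}$, whence $\langle G\rangle=\alpha_i^\perp\cap\alpha_k^\perp$ has codimension two and $G\subseteq F$. Because $F$ is a common face of $\ol K$ and $\ol L$, its subfaces agree in both chambers, so $G$ is also a face of $\ol L$; by uniqueness of $\tau$ it carries the same type $\tau(G)=\kappa(\{i,k\})=\{1,\dots,r\}\setminus\{i,k\}$ computed from either side.

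Finally I would translate this type into the $\beta$-coordinates. Compatibility gives $\tau(G)=\tau_{\ol L}(G)=\kappa(\{j\mid G\subset\beta_j^\perp\})$, forcing $\{j\mid G\subset\beta_j^\perp\}=\{i,k\}$ and in particular $G\subset\beta_i^\perp$. Thus the hyperplane $\beta_i^\perp$ contains the codimension-two subspace $\langle G\rangle=\alpha_i^\perp\cap\alpha_k^\perp$, whose annihilator in $V^\ast$ is exactly $\langle\alpha_i,\alpha_k\rangle$; therefore $\beta_i\in\langle\alpha_i,\alpha_k\rangle$, as claimed. I expect the only delicate point to be the bookkeeping of the type function: one must verify that $G$ is genuinely a common face of the two simplices and that the single global $\tau$ may be evaluated on it from either chamber. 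This is exactly where the uniqueness statement of Proposition \ref{prop-Sproperties} and the definition of compatible indexing carry the argument, after which the conclusion is pure linear algebra.
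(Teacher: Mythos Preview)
The paper does not supply its own proof of this lemma; it is simply quoted from \cite[Lemma 3.31]{CMW}. Your argument is correct and is exactly the one the surrounding machinery is set up to produce: the definition of compatible indexing is phrased in terms of the global type function $\tau$, and your proof unwinds that definition on the codimension-two face $G=\ol{K}\cap\alpha_i^\perp\cap\alpha_k^\perp\subset F$, reading its type $\kappa(\{i,k\})$ from the $L$-side to force $G\subset\beta_i^\perp$ and hence $\beta_i\in\mathrm{Ann}(\alpha_i^\perp\cap\alpha_k^\perp)=\langle\alpha_i,\alpha_k\rangle$. The one point you flag as delicate---that $G$ is genuinely a face of $\ol L$---is fine: $G$ is a subface of the common panel $F$, and the face poset of $F$ is intrinsic, so $G\in\SSSS_L$ and $\tau_{\ol L}$ is defined on it.
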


\begin{prop}
Let $(\AAA, T)$ be a crystallographic arrangement with respect to $R$ and let $K,L \in \KKK$ be adjacent chambers. Choose an indexing $B^K = \{\alpha_1, \dots, \alpha_r\}$ and let the indexing of $B^L = \{\beta_1, \dots, \beta_r\}$ be compatible with $B^K$. Assume $\ol{K} \cap \ol{L} \subset \alpha_k^\perp$ for some $1 \leq k \leq r$.

Then there exist $c_i \in \ZZ$ for $i = 1, \dots, r$ such that $\beta_i = c_i\alpha_k + \alpha_i$. Furthermore, $c_k = -2$ and $c_i \in \NN_0$ for $i \neq k$.
\label{adjcham2}
\end{prop}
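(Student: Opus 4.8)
The plan is to combine the two recalled lemmas—Lemma \ref{adjcham15}, which confines each $\beta_i$ to the plane $\langle \alpha_i, \alpha_k\rangle$, and Lemma \ref{adjcham1}, which controls the signs and integrality of the coefficients—and then to exploit the symmetry between the two chambers to pin down the one coefficient that these one-sided estimates leave undetermined.

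First I would treat the wall root. Under compatible indexing the common wall $\ol K \cap \ol L \subset \alpha_k^\perp$ is also the $k$-th wall of $L$, so $\beta_k^\perp = \alpha_k^\perp$ and hence $\beta_k \in \langle\alpha_k\rangle$. Since $\alpha_k$ is positive on $K$ and the chambers $K, L$ lie on opposite sides of this wall, $\beta_k$ is a \emph{negative} multiple of $\alpha_k$. But Lemma \ref{adjcham1} (with $\alpha_1$ replaced by $\alpha_k$) gives $B^L \subseteq \{-\alpha_k\} \cup \sum_{\alpha\in B^K}\NN_0\alpha$, and a negative multiple of $\alpha_k$ cannot lie in the second set; therefore $\beta_k = -\alpha_k$, which is exactly $\beta_k = c_k\alpha_k + \alpha_k$ with $c_k = -2$.

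Next, for $i \neq k$ the element $\beta_i$ is distinct from $\beta_k = -\alpha_k$, so Lemma \ref{adjcham1} forces $\beta_i \in \sum_{\alpha\in B^K}\NN_0\alpha$. Intersecting this with the constraint $\beta_i \in \langle\alpha_i,\alpha_k\rangle$ from Lemma \ref{adjcham15} and using that $B^K$ is a basis, I obtain $\beta_i = m_i\alpha_i + m_k\alpha_k$ with $m_i, m_k \in \NN_0$. It remains to show $m_i = 1$, which is the crux of the argument. Here I would invoke the symmetry of the hypotheses: compatibility of indexings is governed by the single global type function of $\SSSS$, so $B^K$ is compatibly indexed with $B^L$ as well, and the shared wall is $\beta_k^\perp = \alpha_k^\perp$. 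Applying the same two lemmas with the roles of $K$ and $L$ exchanged yields $\alpha_i = m_i'\beta_i + m_k'\beta_k$ with $m_i', m_k' \in \NN_0$. Substituting $\beta_i = m_i\alpha_i + m_k\alpha_k$ and $\beta_k = -\alpha_k$ and comparing coefficients in the basis $B^K$ gives $m_i'm_i = 1$ and $m_i'm_k = m_k'$; the first equation, over the non-negative integers, forces $m_i = m_i' = 1$. Setting $c_i := m_k \in \NN_0$ then gives $\beta_i = c_i\alpha_k + \alpha_i$, completing the proof.

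I expect the main obstacle to be precisely this verification that the coefficient of $\alpha_i$ equals $1$ rather than an arbitrary positive integer; the naive one-sided estimates only bound it from below, and it is the reversal argument—made legitimate by the uniqueness of the type function underlying compatible indexing—that pins it down. A secondary point requiring care is checking that compatible indexing really identifies the shared wall with index $k$ on both sides, so that $\beta_k^\perp = \alpha_k^\perp$; this follows from tracing the definitions of $\tau_{\ol K}$ and $\tau_{\ol L}$ through the codimension-one face $\ol K \cap \ol L$.
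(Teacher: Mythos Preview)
Your argument is correct and in fact slightly more elementary than the paper's. The paper assembles the change-of-basis map $\alpha_i \mapsto \beta_i$ into a single matrix in $\GL_r(\ZZ)$, observes (via Lemma~\ref{adjcham1} applied to both $K$ and $L$) that the lower-right $(r-1)\times(r-1)$ block $A$ and its inverse both have nonnegative integer entries, and then invokes an external result (Berman--Plemmons) to conclude that $A$ is a permutation matrix; only at the end does it appeal to Lemma~\ref{adjcham15} to force that permutation to be the identity. You instead apply Lemma~\ref{adjcham15} first, which immediately confines each $\beta_i$ to the plane $\langle\alpha_i,\alpha_k\rangle$, and then the symmetry argument reduces to the scalar equation $m_i m_i' = 1$ in $\NN_0$. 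Your route trades the global matrix picture for a coordinate-wise computation and thereby avoids the cited fact about nonnegative monomial matrices; the paper's route, conversely, makes the structure of the transition map more visible as a whole. Both are valid, and the point you flag as the crux---pinning the $\alpha_i$-coefficient to $1$ via the reversal---is exactly where the paper's argument also does its work, just packaged differently.
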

\begin{proof}
Without loss of generality we can assume $k=1$, then $\beta_1 = -\alpha_1 = \alpha_1 - 2\alpha_1$. Consider the linear transformation $\sigma$, mapping $\alpha_i$ to $\beta_i$. This is an element in $\GL_r(\RR)$ with entries in $\ZZ$, since the arrangement is crystallographic and $B^K$, $B^L$ are bases. By symmetry the inverse has also entries in $\ZZ$, so $\sigma \in \GL_r(\ZZ)$ holds. Hence the matrix of $\sigma$ with respect to bases $B^K$ and $B^L$ is
$$
\begin{pmatrix}
	-1 & c_2 & \dots & c_r \\
	0 &&&\\
	\vdots & &A&\\
	0 &&&
\end{pmatrix},
$$
where by Lemma \ref{adjcham1} the $c_i$ are in $\NN$ and $A \in GL_{r-1}(\ZZ)$ with nonnegative entries. Since the matrix of $\sigma^{-1}$ is of the same form, $A^{-1} \in \GL_{r-1}(\ZZ)$ with nonnegative entries. It is well known (see Theorem 4.6 in Chapter 3 in \cite{BP79} for example) that this implies that $A$ is monomial, and since its entries are in $\ZZ$, $A$ is a permutation matrix. We know therefore that $\beta_i = \sigma(\alpha_i) = c_i\alpha_1 + \alpha_{\pi(i)}$ for some permutation $\pi$. It remains to show that $A$ is in fact the identity matrix. This is a consequence of Lemma \ref{adjcham15}, as $\beta_k = \lambda_1\alpha_1 + \lambda_k \alpha k$ for $\lambda_1, \lambda_k \in \RR$. Using that $B^K$, $B^L$ are bases we find $\pi = \id_{\{1, \dots, r\}}$.
\end{proof}

\subsection{Cartan graphs and Weyl groupoids}\label{CSWG}

We recall the notion of a Weyl groupoid which was introduced by Heckenberger and Yamane \cite{HY08} and reformulated in \cite{p-CH09a}.

\begin{defi}
Let $I:=\{1,\ldots,r\}$ and
$\{\alpha_i\,|\,i\in I\}$ the standard basis of $\ZZ ^I$.
A {\it generalized Cartan matrix}
$\Cm =(\cm _{ij})_{i,j\in I}$
is a matrix in $\ZZ ^{I\times I}$ such that
\begin{enumerate}
\item[(M1)] $\cm _{ii}=2$ and $\cm _{jk}\le 0$ for all $i,j,k\in I$ with
  $j\not=k$,
\item[(M2)] if $i,j\in I$ and $\cm _{ij}=0$, then $\cm _{ji}=0$.
\end{enumerate}
\end{defi}

\begin{defi}
Let $A$ be a non-empty set, $\rfl _i : A \to A$ a map for all $i\in I$,
and $\Cm ^a=(\cm ^a_{jk})_{j,k \in I}$ a generalized Cartan matrix
in $\ZZ ^{I \times I}$ for all $a\in A$. The quadruple
\[ \Cc = \Cc (I,A,(\rfl _i)_{i \in I}, (\Cm ^a)_{a \in A})\]
is called a \textit{Cartan graph} if
\begin{enumerate}
\item[(C1)] $\rfl _i^2 = \id$ for all $i \in I$,
\item[(C2)] $\cm ^a_{ij} = \cm ^{\rfl _i(a)}_{ij}$ for all $a\in A$ and
  $i,j\in I$.
\end{enumerate}

\end{defi}

\begin{defi}
Let $\Cc = \Cc (I,A,(\rfl _i)_{i \in I}, (\Cm ^a)_{a \in A})$ be a
Cartan graph. For all $i \in I$ and $a \in A$ define $\s _i^a \in
\Aut(\ZZ ^I)$ by
\begin{align}
\s _i^a (\alpha_j) = \alpha_j - \cm _{ij}^a \alpha_i \qquad
\text{for all $j \in I$.}
\label{fwg_eq:sia}
\end{align}
The \textit{Weyl groupoid of} $\Cc $
is the category $\Wg (\Cc )$ such that $\Ob (\Wg (\Cc ))=A$ and
the morphisms are compositions of maps
$\s _i^a$ with $i\in I$ and $a\in A$,
where $\s _i^a$ is considered as an element in $\Hom (a,\rfl _i(a))$.
The cardinality of $I$ is the \textit{rank of} $\Wg (\Cc )$.
\end{defi}
\begin{defi}
A Cartan graph is called \textit{standard} if $C^a = C^b$ for all $a,b \in A$.
A Cartan graph is called \textit{connected} if its Weyl grou\-poid
is connected, that is if for all $a,b\in A$ there exists $w\in \Hom (a,b)$.
The Cartan graph is called \textit{simply connected}
if $\Hom (a,a)=\{\id ^a\}$ for all $a\in A$.

Two Cartan graphs
$$\Cc = \Cc (I,A,(\rfl _i)_{i \in I}, (\Cm ^a)_{a \in A}), \quad \Cc' = \Cc (I',A',(\rfl' _i)_{i \in I}, ({\Cm'} ^a)_{a \in A'})$$
are called \textit{equivalent} if there are bijections $\varphi_0: I \to I'$, $\varphi_1: A \to A'$ such that $\varphi_1(\rfl_i(a)) = \rfl'_{\varphi_0(i)}(\varphi_1(a))$ and $c_{\varphi_0(i)\varphi_0(j)}^{\varphi_1(a)} = c_{ij}^a$ for all $i,j \in I$, $a \in A$.
\end{defi}

Let $\Cc $ be a Cartan graph. For all $a\in A$ let
\[ \rer a=\{ \id ^a \s _{i_1}\cdots \s_{i_k}(\alpha_j)\,|\,
k\in \NN _0,\,i_1,\dots,i_k,j\in I\}\subseteq \ZZ ^I.\]
The elements of the set $\rer a$ are called \emph{real roots} (at $a$).
The pair $(\Cc ,(\rer a)_{a\in A})$ is denoted by $\rsC \re (\Cc )$.
A real root $\alpha\in \rer a$, where $a\in A$, is called positive
(resp.\ negative) if $\alpha\in \NN _0^I$ (resp.\ $\alpha\in -\NN _0^I$).

\begin{defi}
Let $\Cc =\Cc (I,A,(\rfl _i)_{i\in I},(\Cm ^a)_{a\in A})$ be a Cartan
graph. For all $a\in A$ let $R^a\subseteq \ZZ ^I$, and define
$m_{i,j}^a= |R^a \cap (\NN_0 \alpha_i + \NN_0 \alpha_j)|$ for all $i,j\in
I$ and $a\in A$. We say that
\[ \rsC = \rsC (\Cc , (R^a)_{a\in A}) \]
is a \textit{root system of type} $\Cc$ if it satisfies the following
axioms.
\begin{enumerate}
\item[(R1)]
$R^a=R^a_+\cup - R^a_+$, where $R^a_+=R^a\cap \NN_0^I$, for all
$a\in A$.
\item[(R2)]
$R^a\cap \ZZ\alpha_i=\{\alpha_i,-\alpha_i\}$ for all $i\in I$, $a\in A$.
\item[(R3)]
$\s _i^a(R^a) = R^{\rfl _i(a)}$ for all $i\in I$, $a\in A$.
\item[(R4)]
If $i,j\in I$ and $a\in A$ such that $i\not=j$ and $m_{i,j}^a$ is
finite, then
$(\rfl _i\rfl _j)^{m_{i,j}^a}(a)=a$.
\end{enumerate}
\end{defi}

The root system $\rsC $ is called \textit{finite} if for all $a\in A$ the
set $R^a$ is finite. By \cite[Prop.\,2.12]{p-CH09a},
if $\rsC $ is a finite root system
of type $\Cc $, then $\rsC =\rsC \re $, and hence $\rsC \re $ is a root
system of type $\Cc $ in that case.
Roots which are not real roots are called \emph{imaginary roots}.

\begin{rem}
If $\Cc $ is a Cartan graph and there exists a root system of type $\Cc $,
then $\Cc $ satisfies
\begin{itemize}
  \item [(C3)] If $a,b\in A$ and $\id \in \Hom (a,b)$, then $a=b$.
\end{itemize}
\end{rem}

\begin{defi}
Let $\Cc =\Cc (I,A,(\rfl _i)_{i\in I},(\Cm ^a)_{a\in A})$ be a Cartan graph with Weyl group\-oid $\Wg (\Cc )$. Let $b \in A$, $J \subseteq I$ and $\Pi_J = \langle \rfl_j \mid j \in J \rangle \leq \Sym(A)$. For $a \in A$, let $\Cm_J^a$ be the restriction of $\Cm^a$ to the indices $J$, i.e.\ $\Cm_J^a := (c_{ij}^a)_{i,j \in J}$. Define the \textit{$J$-residue of $\Cc$ (containing $b$)} as
$$\Cc^b_J := (J,\Pi_J(b), (\rfl _j)_{j\in J}, (\Cm_J^a)_{a \in \Pi_J(b)}).
$$
\end{defi}

\begin{lem}
With notation as above, $\Cc_J^b$ is connected. If $\Cc$ is simply connected, then so is $\Cc_J^b$. If $\Cc$ admits a root system, then so does $\Cc_J^b$, and if furthermore $\Pi_J(b)$ is finite, $\Cc_J^b$ admits a finite real root system.
\label{residues}
\end{lem}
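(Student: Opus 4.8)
The statement bundles four assertions about the residue $\Cc_J^b$: connectedness, inheritance of simple connectedness, inheritance of a root system, and finiteness of the real root system when $\Pi_J(b)$ is finite. I would treat them in that order, since each one leans on the structure already set up by the previous. The guiding principle throughout is that $\Cc_J^b$ is built by forgetting all directions outside $J$, so every groupoid-theoretic or root-theoretic property should descend by restricting morphisms to compositions of the $\s_j$ with $j \in J$ and projecting roots onto the coordinate subspace $\ZZ^J \subseteq \ZZ^I$.

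\emph{Connectedness.} By construction the object set of $\Cc_J^b$ is the orbit $\Pi_J(b) = \langle \rfl_j \mid j \in J\rangle(b)$, and every element of this orbit is reached from $b$ by a word in the $\rfl_j$, $j \in J$. Lifting each such word to the corresponding composition of morphisms $\s_j^a$ (which by definition are morphisms of $\Wg(\Cc_J^b)$) exhibits, for every $a \in \Pi_J(b)$, a morphism in $\Hom(b,a)$; composing and inverting then gives a morphism between any two objects. Hence $\Cc_J^b$ is connected.

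\emph{Simple connectedness.} Assume $\Cc$ is simply connected, i.e.\ $\Hom(a,a) = \{\id^a\}$ in $\Wg(\Cc)$ for all $a$. The point is that a morphism of $\Wg(\Cc_J^b)$ is in particular a morphism of $\Wg(\Cc)$: the generators $\s_j^a$ and their action (\ref{fwg_eq:sia}) are literally the same maps, the only difference being that in the residue we only allow indices in $J$ and only fix the submatrix $\Cm_J^a$. Thus any endomorphism of an object $a$ in $\Wg(\Cc_J^b)$ is also an endomorphism of $a$ in $\Wg(\Cc)$, hence the identity. This gives $\Hom_{\Wg(\Cc_J^b)}(a,a) = \{\id^a\}$ and so $\Cc_J^b$ is simply connected. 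The subtlety I would flag here is that the generators $\s_j^a$ of $\Wg(\Cc_J^b)$ act on $\ZZ^J$ rather than $\ZZ^I$, so to run this comparison cleanly one either checks that restricting to $\ZZ^J$ loses no information about whether a word is the identity, or one works directly with the words and the axiom (C2) ensuring $\Cm_J^a$ only depends on the $J$-reflections applied.

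\emph{Root system.} Suppose $\rsC(\Cc,(R^a)_a)$ is a root system of type $\Cc$. The natural candidate for the residue is $R_J^a := R^a \cap \ZZ^J$ for $a \in \Pi_J(b)$. I would verify the axioms (R1)--(R4) for $(R_J^a)_a$ one at a time: (R1) and (R2) are immediate from intersecting the corresponding statements for $R^a$ with $\ZZ^J$; (R3) requires checking $\s_j^a(R_J^a) = R_J^{\rfl_j(a)}$ for $j \in J$, which follows because $\s_j^a$ preserves $\ZZ^J$ (it only modifies the $\alpha_j$-coordinate, $j \in J$) and restricts to the ambient (R3); and (R4) is the delicate one, since the numbers $m_{i,j}^a$ are defined by intersecting with $\NN_0\alpha_i + \NN_0\alpha_j$, and for $i,j \in J$ this intersection already lies in $\ZZ^J$, so $m_{i,j}^a$ computed in the residue agrees with the ambient one, and the relation $(\rfl_i\rfl_j)^{m_{i,j}^a}(a)=a$ transfers verbatim. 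The only genuine point to confirm is that this common value of $m_{i,j}^a$ really is unchanged, i.e.\ that no roots with $\alpha_i,\alpha_j$-support in $J$ are lost by the restriction; since $\NN_0\alpha_i+\NN_0\alpha_j \subseteq \ZZ^J$ for $i,j\in J$, this is automatic.

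\emph{Finiteness when $\Pi_J(b)$ is finite.} Here the expected main obstacle lies. One must upgrade ``admits a root system'' to ``admits a \emph{finite real} root system'' under the extra hypothesis $|\Pi_J(b)| < \infty$. The plan is to invoke the cited result that a finite root system of type $\Cc_J^b$ must coincide with the real root system $\rsC\re(\Cc_J^b)$; so it suffices to show the $R_J^a$ are finite. The finiteness of $\Pi_J(b)$ controls the object set, but one still needs a bound on the number of positive roots. I would argue that in a simply connected, connected Cartan graph with a root system and finitely many objects in the residue, the positive roots at $a$ are in bijection with a set governed by the (finite) groupoid — concretely, each positive real root arises as $\s_{j_1}\cdots\s_{j_{k-1}}(\alpha_{j_k})$ where the corresponding sequence of objects gives a reduced word in the finite groupoid $\Wg(\Cc_J^b)$, and a finite groupoid has only finitely many morphisms, hence finitely many reduced words, hence finitely many positive roots. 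The care needed is to rule out imaginary roots: one invokes that for a finite root system $\rsC = \rsC\re$, and then the length-function/Coxeter-type argument bounds the reduced words. I would expect the cleanest route to be: finitely many objects and the axioms force $\Wg(\Cc_J^b)$ to be a finite groupoid, finiteness of the groupoid bounds the number of real roots, and the coincidence of finite with real root systems then certifies that the real root system is itself a finite root system of type $\Cc_J^b$.
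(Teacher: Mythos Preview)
Your treatment of connectedness and simple connectedness matches the paper's. For the existence of a root system you take a different but equally valid route: the paper works directly with the real roots of the residue,
\[
\{\id^a\s_{i_1}\cdots\s_{i_k}(\alpha_j) \mid k\in\NN_0,\ i_1,\dots,i_k,j \in J\},
\]
noting that these sit inside the real roots of $\Cc$ at $a$ and so inherit (R1)--(R4), whereas you intersect the given root system with $\ZZ^J$ and verify the axioms by hand. Your verification is more explicit; the paper's choice, however, pays off in the final step.

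For the finiteness claim the paper's argument is one line: each real root of the residue has the form $w(\alpha_j)$ for some $j \in J$ and some morphism $w \in \Hom(c,a)$ with $c \in \Pi_J(b)$, and since (under simple connectedness) each $\Hom(c,a)$ is a singleton, there are at most $|J|\cdot|\Pi_J(b)|$ real roots. You circle close to this --- your observation that each positive real root is $\s_{j_1}\cdots\s_{j_{k-1}}(\alpha_{j_k})$ is the right starting point --- but then detour through finiteness of the groupoid and reduced-word counts rather than stating the bound directly. More importantly, your declared subgoal ``it suffices to show the $R_J^a$ are finite'' is stronger than what is asked and not what your subsequent argument establishes: $R_J^a = R^a \cap \ZZ^J$ may contain roots that are not real for $\Cc_J^b$, and your groupoid argument only bounds the real ones. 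The paper avoids this mismatch by never bringing $R^a \cap \ZZ^J$ into the finiteness part. (Both the paper's count and your sketch tacitly use simple connectedness of $\Cc_J^b$ here; without it $\Hom(c,a)$ need not be a singleton and the bound fails --- an affine rank-two example with a single object already shows this.)
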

\begin{proof}
The Cartan graph $\Cc_J^b$ is connected since $\Pi_J$ is transitive on $\Pi_J(b)$. Since the morphisms of $\Wg(\Cc_J^b)$ are generated by a subset of those generating $\Wg(\Cc)$, $\Cc_J^b$ is simply connected if $\Cc$ is.

If $\rsC$ is a root system for $\Cc$ , then for $a \in \Pi_J(b)$ the set
$$
\rer a=\{ \id ^a \s _{i_1}\cdots \s_{i_k}(\alpha_j)\,|\,
k\in \NN _0,\,i_1,\dots,i_k,j\in J\}
$$
is a subset of the real roots at $a$ in $\Cc$, hence $\rer a$ satisfies (R1) - (R4). Assume that $\Pi_J(b)$ is finite, then for all $a \in \Pi_J(b)$ the set $\rer a$ contains at most $|J||\Pi_J(b)|$ elements, and is hence finite.
\end{proof}

\subsection{Cartan graphs and crystallographic arrangements}

In the following section we point out one part of the correspondence between reduced root systems of crystallographic Tits arrangements and real roots of a Cartan graph. We will use some results for subarrangements, which will be provided in Section \ref{subarr}.

Proposition \ref{adjcham2} allows us to make the following definition:

\begin{defi}
Let $(\AAA, T)$ be a crystallographic Tits arrangement with respect to $R$. Let $K$ be a chamber and $B^K = \{\alpha_1, \dots, \alpha_r\}$. Let $K^i$ be the chamber $i$-adjacent to $K$, i.e.\ $\ol{K} \cap \ol{K^i} \subset \alpha_i^\perp$. Let $B^{K^i} = \{\beta_1^i, \dots, \beta_r^i\}$ be indexed compatibly, then by Proposition \ref{adjcham2} we find that $\beta_j^i = c_j^i\alpha_i + \alpha_j$ with $c_j^i \in \NN$ for $i \neq j$ and $c_i^i = -2$. We will call the matrix $C^K  = (-c_j^i)_{1\leq i,j \leq r}$ the \textit{Cartan matrix at $K$}.

Furthermore, in the above setting we denote by $\varphi_{K^i,K}$ the linear extension of the map $\alpha_j \mapsto \beta_j^i$ for all $j = 1, \dots, r$.
\end{defi}

The following lemma shows that the notion of a Cartan matrix at a chamber $K$ is justified, as it is indeed a generalized Cartan matrix. For the sake of brevity, we omit the fact that it should be called the ``generalized Cartan matrix at a chamber''. In this work we will not define what a (non-generalized) Cartan matrix is.

\begin{lem}
Let $K \in \KKK$, $C^K$ the Cartan matrix at $K$. Then $C^K$ is a generalized Cartan matrix.
\label{CKGCM}
\end{lem}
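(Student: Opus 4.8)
The plan is to verify directly that $C^K=(-c_j^i)_{1\le i,j\le r}$ satisfies the two defining conditions of a generalized Cartan matrix. Condition (M1) is immediate from Proposition \ref{adjcham2}: the diagonal entries are $-c_i^i=-(-2)=2$, the off-diagonal entries are $-c_j^i$ with $c_j^i\in\NN_0$, hence $\le 0$, and all entries lie in $\ZZ$ because the $c_j^i$ do. The whole content therefore sits in (M2). Unravelling the definition $\beta_j^i=c_j^i\alpha_i+\alpha_j$ (so that $c_j^i=0$ if and only if $\beta_j^i=\alpha_j$), condition (M2) is exactly the symmetry statement
$$ c_j^i=0 \quad\Longleftrightarrow\quad c_i^j=0 \qquad (i\neq j). $$

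First I would reduce this to the rank-2 case. The constants $c_j^i$ and $c_i^j$ only involve $\alpha_i,\alpha_j$ and the chambers $K^i,K^j$ adjacent to $K$ across the walls $\alpha_i^\perp,\alpha_j^\perp$; moreover by Lemma \ref{adjcham15} the roots $\beta_j^i$ and $\beta_i^j$ both lie in $\langle\alpha_i,\alpha_j\rangle$. Thus all relevant data is concentrated at the codimension-$2$ face $F=\ol{K}\cap\alpha_i^\perp\cap\alpha_j^\perp$, whose support is $U=\alpha_i^\perp\cap\alpha_j^\perp$, and the localisation $\AAA_F$ consists precisely of the hyperplanes $\gamma^\perp$ with $\gamma\in R\cap\langle\alpha_i,\alpha_j\rangle$. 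Passing to the quotient $V/U\cong\RR^2$, the forms $\alpha_i,\alpha_j$ descend to a basis of the dual, the image of $K$ is the chamber with root basis $\{\alpha_i,\alpha_j\}$, and the results announced for Section \ref{subarr} furnish a rank-$2$ crystallographic Tits arrangement in which the transition constants $c_j^i,c_i^j$ are unchanged. Hence it suffices to treat $r=2$.

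In rank $2$, assume $c_j^i=0$, i.e.\ $\beta_j^i=\alpha_j$, so that $B^{K^i}=\{-\alpha_i,\alpha_j\}$. Since $K^{B^K}=K$ and $K^{B^{K^i}}=K^i$, the chambers are the full open quadrants $K=\{\alpha_i>0,\alpha_j>0\}$ and $K^i=\{\alpha_i<0,\alpha_j>0\}$, both contained in $T$. I claim $\alpha_i^\perp$ and $\alpha_j^\perp$ are the only hyperplanes of $\AAA$: if $\gamma^\perp\in\AAA$ were a further wall, then writing $\gamma=a\alpha_i+b\alpha_j$ via Lemma \ref{posorneg} we would have $a,b\neq 0$ of the same sign, say $a,b>0$; but then $\gamma^\perp$ contains a point with $\alpha_i<0,\ \alpha_j>0$, and so meets the open chamber $K^i$, contradicting $K^i\subseteq T\setminus\bigcup_{H\in\AAA}H$. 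Consequently $K^i$ and $K^j=\{\alpha_i>0,\alpha_j<0\}$ are quadrants with root bases $\{-\alpha_i,\alpha_j\}$ and $\{\alpha_i,-\alpha_j\}$, whence $c_j^i=c_i^j=0$. Exchanging the roles of $i$ and $j$ gives the converse implication, and (M2) follows.

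The main obstacle is the rank-$2$ reduction of the second paragraph: one must check carefully that localising the crystallographic Tits arrangement at the codimension-$2$ face $F$ and quotienting by its support really yields a rank-$2$ crystallographic Tits arrangement, that the image of $K$ is a chamber with root basis $\{\alpha_i,\alpha_j\}$, and that the constants $c_j^i,c_i^j$ are genuinely preserved. This is exactly where the subarrangement machinery of Section \ref{subarr} is needed, and it is the one new ingredient relative to the spherical case of \cite{Cu11}. By contrast, the rank-$2$ computation itself is elementary, using only the simplicial (Tits) structure together with Lemma \ref{posorneg}, and in particular applies uniformly to finite and infinite root systems.
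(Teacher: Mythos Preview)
Your proof is correct, but it takes a different route from the paper's. The paper argues directly in rank $r$: assuming $c_i^j=0$ (so $\beta_i^j=\alpha_i\in B^{K^j}$) and $c_j^i>0$, one computes that the root $\beta_j^i=c_j^i\alpha_i+\alpha_j=c_j^i\,\beta_i^j-\beta_j^j$ has mixed-sign coordinates in the basis $B^{K^j}$, contradicting Lemma~\ref{posorneg}. (The paper phrases this via a vector $v=-c_j^i\alpha_j^\vee+\alpha_i^\vee$ on which $\beta_i^j,\beta_j^j$ are positive while $\beta_j^i$ vanishes, and appeals to ``the simplicial structure of $\SSSS$''; the content is the same.) This needs only Proposition~\ref{adjcham2} and Lemma~\ref{posorneg}.

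Your approach instead passes to the rank-$2$ parabolic at the codimension-$2$ face $F=\ol{K}\cap\alpha_i^\perp\cap\alpha_j^\perp$, invoking the subarrangement machinery of Section~\ref{subarr}, and then shows the localized arrangement has only the two hyperplanes $\alpha_i^\perp,\alpha_j^\perp$. This is geometrically transparent, but the detour through localization is not needed: the heart of your rank-$2$ computation---that if $B^{K^i}_x=\{-\alpha_i,\alpha_j\}$ then any root $\gamma=a\alpha_i+b\alpha_j$ with $a,b>0$ would have mixed signs relative to $B^{K^i}$---is exactly Lemma~\ref{posorneg} applied to $K^i$ in the original arrangement, with no quotient required. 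The paper's argument is thus shorter and self-contained, while yours makes the underlying $A_1\times A_1$ picture explicit at the cost of a forward reference.
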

\begin{proof}
The matrix $C^K$ satisfies (M1) from the definition by Proposition \ref{adjcham2}. So assume $c_i^j = 0$. This implies $\beta_i^j = \alpha_i$. By construction $\beta_j^i = c_j^i\alpha_i + \alpha_j$. Assume $c_j^i > 0$ and let $v = -c_j^i\alpha_j^\vee + \alpha_i^\vee \in V$. Then $\beta_i^j(v) = \alpha_i(v) = 1$, $\beta_j^j(v) = -\alpha_j(v) = c_j^i$ and therefore $\beta_j^i(v) = 0$. The last equality means $v \in (\beta_j^i)^\perp$, which contradicts the simplicial structure of $\mathcal{S}$. So (M2) holds.
\end{proof}

\begin{prop}
Let $(\AAA, T)$ be a crystallographic Tits arrangement with respect to $R$ and assume the $B^K$ are indexed compatibly for all $K\in \KKK$. Set $I := \{1, \dots, r\}$, $A := \KKK$, $C^K$ the generalized Cartan matrix at $K$, and for $i \in I$ let $\rho_i: A \to A$, $K \mapsto K^i$, where $K^i$ is the chamber $i$-adjacent to $K$.
Then $\Cc := \Cc(I, A, (\rho_i)_{i\in I}, (C^a)_{a \in A})$ is a connected Cartan graph.
\label{CAtoCS}
\end{prop}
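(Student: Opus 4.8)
The plan is to check the three defining conditions of a Cartan graph for $\Cc$ — that each $C^a$ is a generalized Cartan matrix, axiom (C1), and axiom (C2) — and then to verify connectedness. Two of the ingredients are essentially at hand. First, $C^K$ is a generalized Cartan matrix for every chamber $K$ by Lemma \ref{CKGCM}. Second, each $\rho_i$ is a well-defined self-map of $A = \KKK$: by Proposition \ref{prop-Sproperties} the complex $\SSSS$ is a thin chamber complex, so the panel $\ol{K} \cap \alpha_i^\perp$ of $K$ is a face of exactly one chamber besides $K$, namely $K^i$; hence $K^i \in \KKK$ exists and is unique, and $\rho_i(K) = K^i$ is well defined. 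Throughout I use that all $B^K$ are indexed by the single global type function $\tau$, so that the ``compatible indexing'' of Proposition \ref{adjcham2} relative to any adjacent chamber agrees with the global indexing.

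For (C1) I would first record the relation $\beta_i^i = -\alpha_i$, which is immediate from Proposition \ref{adjcham2} since $c_i^i = -2$ gives $\beta_i^i = -2\alpha_i + \alpha_i$. Consequently $(\beta_i^i)^\perp = \alpha_i^\perp$, so the panel $\ol{K} \cap \ol{K^i} \subset \alpha_i^\perp$ shared by $K$ and $K^i$ is precisely the $i$-th wall of $K^i$. By the uniqueness of the chamber across that wall (thinness again), the chamber $i$-adjacent to $K^i$ is $K$ itself, i.e. $\rho_i(\rho_i(K)) = \rho_i(K^i) = K$. This is (C1).

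The key point is (C2), which I would deduce from the fact that passing to the $i$-adjacent chamber and back is the identity. Applying Proposition \ref{adjcham2} to the pair $K, K^i$ gives $\beta_j^i = c_j^i \alpha_i + \alpha_j = \alpha_j - C^K_{ij}\alpha_i$. Applying it instead to the pair $K^i, (K^i)^i = K$ — using (C1), and the fact that the globally indexed basis $B^{(K^i)^i}$ is exactly $\{\alpha_1, \dots, \alpha_r\}$ — yields $\alpha_j = \hat c_j^i \beta_i^i + \beta_j^i$ with $C^{K^i}_{ij} = -\hat c_j^i$. Substituting $\beta_i^i = -\alpha_i$ and the expression for $\beta_j^i$ into this relation gives $\alpha_j = (c_j^i - \hat c_j^i)\alpha_i + \alpha_j$, so comparing coefficients forces $\hat c_j^i = c_j^i$ and therefore $C^{K^i}_{ij} = C^K_{ij}$, which is exactly (C2). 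The one delicate step is the assertion that $B^{(K^i)^i}$, compatibly indexed, coincides with the original $B^K = \{\alpha_j\}$ including its indexing; this is where the global consistency of the compatible indexing (via the uniqueness of the type function $\tau$) is essential, and I expect this to be the main point to argue with care.

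Finally, for connectedness I would invoke Proposition \ref{prop-Sproperties}, by which the chamber complex $\SSSS(\AAA,T)$ is strongly connected. Hence any two chambers $K, L \in \KKK$ are joined by a gallery, and each step of the gallery crosses a panel of some type $i$, i.e. replaces a chamber by its $i$-adjacent neighbour. Translating this gallery into the corresponding composition of the generators $\sigma_i$ produces a morphism in $\Hom(K, L)$, so $\Wg(\Cc)$ is connected and $\Cc$ is a connected Cartan graph.
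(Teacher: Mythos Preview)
Your proof is correct and follows essentially the same route as the paper: invoke Lemma~\ref{CKGCM} for the Cartan matrices, obtain (C1) from thinness and the compatible indexing, derive (C2) by applying Proposition~\ref{adjcham2} to the pair $(K,K^i)$ and then to $(K^i,K)$ and comparing coefficients, and deduce connectedness from the gallery-connectedness of $\SSSS$ in Proposition~\ref{prop-Sproperties}. Your explicit remark that the global type function $\tau$ is what guarantees $B^{(K^i)^i}$ recovers the original indexing of $B^K$ is a point the paper leaves implicit, but otherwise the arguments coincide.
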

\begin{proof}
By Lemma \ref{CKGCM}, for $K \in \KKK$ the Cartan matrix $C^K$ at $K$ is a generalized Cartan matrix.

The maps $\rho_i$ are well defined, as for $K \in \KKK$, there exists a unique chamber $K^i$ which is $i$-adjacent to $K$. Since $K$ is then also $i$-adjacent to $K^i$, as the indexing of the root basis is compatible, $\rho_i$ is an involution. Thus $\Cc$ satisfies (C1).

It remains to check (C2). So let $K, L$ be $i$-adjacent with $B^K = \{\alpha_1, \dots, \alpha_r\}$, $B^L = \{\beta_1, \dots, \beta_r\}$. So we find $\rho_i(K) = L$. Let $i,j\in I$. If $i=j$, $c_{ii}^K = 2 = c_{jj}^L$, so assume $i \neq j$.

We find the $i,j$-th entry of $C^K$ to be the number $-c$ such that $\beta_j = c\alpha_i+\alpha_j$. The $i,j$-th entry of $C^{K^i}$ is the number $-d$ defined by
$\alpha_j = d\beta_i + \beta_j$. We obtain $\alpha_j = -d \alpha_i + c\alpha_i+\alpha_j$, and therefore $c = d$ by using the linear independence of $\alpha_i, \alpha_j$. Therefore $\Cc$ satisfies (C2) and is a Cartan graph.

The Cartan graph $\Cc$ is connected: Since $\SSSS$ is a chamber complex, we can find a gallery between two chambers $K$ and $K'$. Let $(K = K_0, \dots, K_m=K')$ be such a gallery. Assume that $K_{j-1}$ and $K_{j}$ are $i_j$-adjacent. Then the map $\sigma^{K_{m-1}}_{i_j} \cdots \sigma^{K_0}_{i_1}$ is in $\Hom(K_0, K_m) = \Hom(K, K')$. 
\end{proof}

\begin{defi}
Given a crystallographic Tits arrangement $(\AAA, T)$ with respect to $R$, we will denote the Cartan graph defined in Proposition \ref{CAtoCS} by $\Cc(\AAA,T,R)$.

Let $K \in \KKK$ and let $\phi_K: V^\ast \to \RR^r$ be the coordinate map of $V^\ast$ with respect to the basis $B^K$. As $R$ is crystallographic, $\phi_K(R) \subset \ZZ^r$. Furthermore let $R^K := \phi_K(R)$ for $K \in \KKK$.
\end{defi}

\begin{lem}
Let $K,K'$ be $i$-adjacent chambers. Then
\begin{eqnarray*}
\phi_{K'} \circ \varphi_{K', K} &=& \phi_K,\\
\phi_{K'} &=& \sigma_i^K \circ \phi_K.
\end{eqnarray*}
\end{lem}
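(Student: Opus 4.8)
The plan is to verify both identities by evaluating the relevant linear maps on the root basis $B^K = \{\alpha_1, \dots, \alpha_r\}$, since two linear maps agreeing on a basis coincide. Throughout I write $e_1, \dots, e_r$ for the standard basis of $\RR^r$, so that by definition of the coordinate maps $\phi_K(\alpha_j) = e_j$ and $\phi_{K'}(\beta_j^i) = e_j$, and I recall that $\sigma_i^K$ acts on $\RR^r \cong \ZZ^I$ by $\sigma_i^K(e_j) = e_j - c_{ij}^K e_i$, where $c_{ij}^K$ is the $(i,j)$-entry of the Cartan matrix at $K$.

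For the first identity I would simply compute $\phi_{K'}(\varphi_{K', K}(\alpha_j))$. By the definition of $\varphi_{K', K}$ we have $\varphi_{K', K}(\alpha_j) = \beta_j^i$, whence $\phi_{K'}(\varphi_{K', K}(\alpha_j)) = \phi_{K'}(\beta_j^i) = e_j = \phi_K(\alpha_j)$ for every $j$. As both sides are linear and agree on $B^K$, the identity $\phi_{K'} \circ \varphi_{K', K} = \phi_K$ follows at once.

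For the second identity the only real work is to express each $\alpha_j$ in the basis $B^{K'} = \{\beta_1^i, \dots, \beta_r^i\}$, so that $\phi_{K'}(\alpha_j)$ can be read off. Here I would invert the relations $\beta_j^i = c_j^i \alpha_i + \alpha_j$ coming from the definition of the Cartan matrix at $K$ (with $c_j^i = -c_{ij}^K$ and $c_i^i = -2$). From $\beta_i^i = -\alpha_i$ I obtain $\alpha_i = -\beta_i^i$, hence $\phi_{K'}(\alpha_i) = -e_i$; substituting this into $\alpha_j = \beta_j^i - c_j^i \alpha_i = \beta_j^i + c_{ij}^K \alpha_i = \beta_j^i - c_{ij}^K \beta_i^i$ for $j \neq i$ gives $\phi_{K'}(\alpha_j) = e_j - c_{ij}^K e_i$. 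In both cases this equals $\sigma_i^K(e_j) = \sigma_i^K(\phi_K(\alpha_j))$, using $\sigma_i^K(e_i) = e_i - 2 e_i = -e_i$ for the diagonal case. Thus the two linear maps agree on $B^K$, and $\phi_{K'} = \sigma_i^K \circ \phi_K$ follows.

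The only genuine pitfall is the bookkeeping with index and sign conventions: one must remember that the Cartan matrix entry $c_{ij}^K$ equals $-c_j^i$, that $\sigma_i^K$ operates on the coordinate space $\RR^r$ (not on $V^\ast$) through the identification of $B^K$ with the standard basis via $\phi_K$, and that the inversion $\alpha_i = -\beta_i^i$ is precisely what feeds the correct coefficient into the off-diagonal case. Once these conventions are pinned down, both verifications are immediate linear-algebra computations.
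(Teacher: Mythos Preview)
Your proof is correct and is precisely the ``straightforward calculation using the definition of $\phi_K$ and $\sigma_i^K$'' that the paper alludes to without spelling out; you have carried out exactly that computation on the basis $B^K$, with the sign and index conventions handled correctly.
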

\begin{proof}
This is a straight forward calculation using the definition of $\phi_k$ and $\sigma_i^K$.
\end{proof}

Using induction on the above expressions immediately yields:

\begin{fol}
Let $K_0,K_m$ be arbitrary chambers, $(K_0, \dots, K_m)$ be a gallery such that $K_{j-1}$, $K_j$ are $i_j$ adjacent, $j=1,\ldots,m$. Then
\begin{eqnarray*}
\phi_{K_m} \circ (\varphi_{K_m, K_{m-1}} \cdots \varphi_{K_1,K_0}) &=& \phi_{K_0},\\
\phi_{K_m} &=& (\sigma_{i_{m}}^{K_{m-1}} \cdots \sigma_{i_1}^{K_{0}})\circ \phi_{K_0}.
\end{eqnarray*}
\label{mapconc}
\end{fol}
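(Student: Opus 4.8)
The plan is to proceed by induction on the length $m$ of the gallery, using the preceding lemma as the single-step case. For $m=0$ the gallery consists of the single chamber $K_0$, both products are empty and hence equal to the identity, so each asserted identity reduces to $\phi_{K_0}=\phi_{K_0}$; the case $m=1$ is precisely the preceding lemma applied to the $i_1$-adjacent pair $K_0,K_1$. Note that at every edge of the gallery $K_j$ is by hypothesis the $i_j$-adjacent chamber, i.e.\ $K_j=\rho_{i_j}(K_{j-1})$, so the lemma is indeed applicable at each step.

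For the inductive step on the first identity I would peel off the leftmost factor. Since $K_{m-1}$ and $K_m$ are $i_m$-adjacent, the lemma gives $\phi_{K_m}\circ\varphi_{K_m,K_{m-1}}=\phi_{K_{m-1}}$. Substituting this into the left-hand side yields
$$\phi_{K_m}\circ(\varphi_{K_m,K_{m-1}}\circ\varphi_{K_{m-1},K_{m-2}}\circ\cdots\circ\varphi_{K_1,K_0}) = \phi_{K_{m-1}}\circ(\varphi_{K_{m-1},K_{m-2}}\circ\cdots\circ\varphi_{K_1,K_0}),$$
which is exactly the expression attached to the truncated gallery $(K_0,\dots,K_{m-1})$ and equals $\phi_{K_0}$ by the induction hypothesis.

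Symmetrically, for the second identity the lemma gives $\phi_{K_m}=\sigma_{i_m}^{K_{m-1}}\circ\phi_{K_{m-1}}$. Composing on the left with $\sigma_{i_m}^{K_{m-1}}$ and applying the induction hypothesis to $\phi_{K_{m-1}}$ produces the claimed product $\sigma_{i_m}^{K_{m-1}}\circ\cdots\circ\sigma_{i_1}^{K_0}\circ\phi_{K_0}$, which closes the induction.

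There is no genuine obstacle here: the statement is designed to follow immediately from the single-step lemma. The only point requiring a moment's care is the bookkeeping of the composition order, namely grouping the product of the $\varphi$'s (respectively the $\sigma$'s) so that the one-step lemma applies at the correct end of the chain; once this grouping is fixed, both induction steps are a one-line substitution.
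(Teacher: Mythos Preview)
Your proof is correct and follows exactly the approach indicated in the paper, which simply states that the corollary follows by induction on the identities of the preceding lemma. Your write-up merely makes explicit the bookkeeping that the paper leaves implicit.
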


\begin{prop}
The Cartan graph $\Cc(\AAA,T,R)$ is simply connected.
\end{prop}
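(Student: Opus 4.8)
The plan is to unwind the definition of simple connectedness: I must show that $\Hom(K,K) = \{\id^K\}$ for every chamber $K \in \KKK = A$. Since every morphism of $\Wg(\Cc(\AAA,T,R))$ is by definition a composition of the generators $\sigma_i^a$, and each $\sigma_i^a$ realizes the transition from a chamber to its $i$-adjacent chamber, an element of $\Hom(K,K)$ is exactly the linear map attached to a closed gallery $(K = K_0, K_1, \dots, K_m = K)$ in which $K_{j-1}$ and $K_j$ are $i_j$-adjacent. So the task reduces to showing that the composite $w := \sigma_{i_m}^{K_{m-1}} \cdots \sigma_{i_1}^{K_0}$ associated to any such closed gallery is the identity automorphism of $\ZZ^I$.

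The main tool will be Corollary \ref{mapconc}, which for any gallery records the transport formula
$$\phi_{K_m} = (\sigma_{i_m}^{K_{m-1}} \cdots \sigma_{i_1}^{K_0}) \circ \phi_{K_0}.$$
First I would apply this to a closed gallery, so that $K_m = K_0$ and hence $\phi_{K_m} = \phi_{K_0}$; this yields $\phi_{K_0} = w \circ \phi_{K_0}$. Next, since $\phi_{K_0}$ is the coordinate map of $V^\ast$ with respect to the basis $B^{K_0}$, it is a linear isomorphism $V^\ast \to \RR^r$, in particular surjective, so composing on the right with its inverse forces $w = \id$ as a map on $\RR^r$, and hence on $\ZZ^I$. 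As $w \in \Hom(K_0,K_0)$ and the identity self-map of a chamber is precisely $\id^{K_0}$, we conclude $w = \id^{K_0}$, which establishes $\Hom(K_0, K_0) = \{\id^{K_0}\}$ and thus simple connectedness.

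The one point that needs care, and the step I would treat as the main obstacle, is the initial reduction: verifying that every morphism in $\Hom(K,K)$ genuinely arises from a closed gallery, so that Corollary \ref{mapconc} applies. This is where the definition of the maps $\rho_i$ through $i$-adjacency enters. A word $\sigma_{i_m}^{K_{m-1}} \cdots \sigma_{i_1}^{K_0}$ is composable precisely when $K_j = \rho_{i_j}(K_{j-1}) = K_{j-1}^{i_j}$ for each $j$, which is exactly the statement that $(K_0, \dots, K_m)$ is a gallery; and membership in $\Hom(K,K)$ forces $K_m = K_0$, i.e. the gallery is closed. Once this identification is in place, the remainder of the argument is purely formal, relying only on the invertibility of the coordinate maps $\phi_K$ together with the transport formula of Corollary \ref{mapconc}.
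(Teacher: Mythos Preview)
Your proposal is correct and follows essentially the same approach as the paper: take an arbitrary $w \in \Hom(K,K)$, write it as the word in the $\sigma_i$'s coming from a closed gallery, and apply Corollary \ref{mapconc} to conclude $\phi_K = w \circ \phi_K$, whence $w = \id$ on $\ZZ^r$ since $\phi_K$ is a linear isomorphism. The paper's proof is simply a terser version of exactly this argument.
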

\begin{proof}
Let $K \in \KKK$ and $w \in \Hom(K,K)$ such that
$$w = \id^K\sigma_{i_{m}^{K_{m-1}}} \cdots \sigma_{i_1^{K}},$$
so in particular $K = K_0, K_1, \dots, K_m = K$ is a gallery from $K$ to $K$.
By Corollary \ref{mapconc}, $w$ is the identity on $\ZZ^r$.
\end{proof}

\begin{prop}
Let $(\AAA,T)$ be a crystallographic arrangement of rank $r$ with respect to $R$ and $\Cc = \Cc(\AAA,T,R)$.  Then the sets $R^K$ are exactly the real roots of $\Cc$ at $K$, and $\rsC = \rsC(\Cc, (R^K)_{K\in \KKK})$ is a root system of type $\Cc$.
\end{prop}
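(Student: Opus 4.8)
The plan is to establish the two assertions separately: first the identification $R^K=\rer{K}$, and then the four root-system axioms (R1)--(R4). The workhorse for the identification is Corollary \ref{mapconc}: along a gallery $(K_0,\dots,K_m)$ with $K_{j-1},K_j$ being $i_j$-adjacent, the morphism $w=\sigma_{i_m}^{K_{m-1}}\cdots\sigma_{i_1}^{K_0}\in\Hom(K_0,K_m)$ satisfies $\phi_{K_m}=w\circ\phi_{K_0}$. For $\rer{K}\subseteq R^K$ I would note that a real root at $K$ has the form $w(\alpha_j)$ for a morphism $w\in\Hom(L,K)$; since the $j$-th element $\alpha_j^L$ of $B^L$ lies in $R$ and $\phi_L(\alpha_j^L)=\alpha_j$, the transformation law gives $w(\alpha_j)=\phi_K(\alpha_j^L)\in\phi_K(R)=R^K$. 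For the converse $R^K\subseteq\rer{K}$ I would first show that every hyperplane of $\AAA$ is a wall of some chamber: given $\gamma\in R$, local finiteness (Lemma \ref{compfin}) lets me choose a point of $\gamma^\perp\cap T$ lying on no other hyperplane of the section of a small neighbourhood, so the two chambers adjacent there have $\gamma^\perp$ as a wall, whence $\pm\gamma\in B^L$ for some $L$. Using Proposition \ref{adjcham2} to pass to the $l$-adjacent chamber (which replaces a simple root by its negative) I may assume $\gamma\in B^L$, so $\phi_L(\gamma)$ is a standard basis vector, and transporting to $K$ along a gallery exhibits $\phi_K(\gamma)=w(\alpha_l)\in\rer{K}$; here reducedness of $R$ guarantees that the wall root at $\gamma^\perp$ is exactly $\pm\gamma$.

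Granting the identification, axioms (R1)--(R3) are short. (R1) follows from Lemma \ref{posorneg} together with the crystallographic property: every $\gamma\in R$ lies in $\pm\sum_{\alpha\in B^K}\NN_0\alpha$, so $\phi_K(\gamma)\in\NN_0^I\cup-\NN_0^I$, and the symmetry $-R=R$ yields $R^K=R^K_+\cup-R^K_+$. (R2) is reducedness: the only roots proportional to $\alpha_i\in B^K$ are $\pm\alpha_i$, whence $R^K\cap\ZZ\alpha_i=\{\pm\alpha_i\}$. (R3) is immediate from the identity $\phi_{\rho_i(K)}=\sigma_i^K\circ\phi_K$ of the (unlabelled) lemma preceding Corollary \ref{mapconc}, since $\sigma_i^K(R^K)=\sigma_i^K(\phi_K(R))=\phi_{\rho_i(K)}(R)=R^{\rho_i(K)}$.

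The crux is (R4). Fixing $i\neq j$ and writing $m:=m_{ij}^K=|R^K\cap(\NN_0\alpha_i+\NN_0\alpha_j)|$, assumed finite, I must show $(\rho_i\rho_j)^m(K)=K$. The roots counted by $m$ are precisely the roots of $R$ in the $2$-dimensional cone $\NN_0\alpha_i+\NN_0\alpha_j$, i.e. the positive roots of the rank-$2$ configuration surrounding the codimension-$2$ flat $\alpha_i^\perp\cap\alpha_j^\perp$. The plan is to reduce to this rank-$2$ situation with the restriction/subarrangement techniques of Section \ref{subarr}: finiteness of $m$ should force the slice transverse to $\alpha_i^\perp\cap\alpha_j^\perp$ to be spherical, hence a finite crystallographic rank-$2$ arrangement whose chambers sit cyclically around the flat and whose positive roots number exactly $m$. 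In that finite rank-$2$ picture the chambers obtained by alternately crossing the $i$- and $j$-walls close up after $2m$ steps, so $\rho_i\rho_j$ applied $m$ times returns to $K$; the object-level conclusion then follows because a chamber is determined by its root basis ($K^{B^K}=K$).

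I expect the main obstacle to be exactly this rank-$2$ reduction, together with the equivalence ``$m_{ij}^K<\infty\iff$ the codimension-$2$ face of $\ol{K}$ cut out by $\alpha_i^\perp,\alpha_j^\perp$ meets $T$'', which is what makes the transverse slice spherical and brings the classical finite rank-$2$ dihedral computation into force. Some care is also required so that the morphism realizing $(\rho_i\rho_j)^m$ is anchored along the correct gallery, so that Corollary \ref{mapconc} and the determination of a chamber by its basis can be combined to deduce $(\rho_i\rho_j)^m(K)=K$ rather than merely that the composite acts trivially on $\ZZ^I$.
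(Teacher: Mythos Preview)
Your proposal is correct and follows essentially the same route as the paper: Corollary~\ref{mapconc} for the identification $R^K=\rer{K}$, direct checks for (R1)--(R3), and a rank-$2$ localisation at a point of the codimension-$2$ face for (R4), where the paper uses Corollary~\ref{Axfin} to supply precisely the equivalence you flag (finiteness of $m_{ij}^K$ forces the face to meet $T$, making the local arrangement spherical with exactly $2m_{ij}$ chambers). Your final worry is unnecessary: $(\rho_i\rho_j)^{m}(K)=K$ is a purely object-level statement that follows directly from the cyclic arrangement of those $2m_{ij}$ chambers around the flat, without invoking Corollary~\ref{mapconc} again.
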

\begin{proof}
We show that $\rsC$ is a root system of type $\Cc$.
Lemma \ref{purelc} and the crystallographic property imply (R1), and (R2) holds since $R$ is reduced.
To show (R3), assume $K, K'$ are $i$-adjacent. In particular this means $\rho_i(K) = K'$. Now
$$\sigma_i^K(R^K) = \sigma_i^K\phi_K(R) = \phi_{K'}(R) = R^{K'} = R^{\rho_i(K)}$$ 
by Corollary \ref{mapconc}, so (R3) holds.

Let $i \neq j \in I$ and $K \in \KKK$, such that $m_{ij} = |R^K \cap (\NN_0\alpha_i + \NN_0 \alpha_j)|$ is finite.
Assume $B^K = \{\beta_1, \dots, \beta_r\}$, then this is equivalent to $m_{ij} = |R \cap \langle \beta_i, \beta_j \rangle|$, as $\phi_K$ maps $\beta_k$ to $\alpha_k$ for $k \in I$. Take the simplex $F \in \SSSS$, $F \subset \ol{K}$, such that the type of $F$ is $\{i,j\}$, in particular $F$ is a $2$-simplex and $F = \alpha_i^\perp \cap \alpha_j^\perp \cap \ol{K}$. Take $x \in F$ such that $\AAA_x \cap W^K = \{\alpha_i^\perp, \alpha_j^\perp\}$, and consider the arrangement $(\AAA_x, R^x)$. Then $\AAA_x$, $R^x$ has exactly $m_{ij}$ elements, by \ref{Axfin} $(\AAA_x^\pi, R^x)$ is a spherical arrangement, and $\KKK^x$ consists of $2m_{ij}$ chambers. The induced simplicial complex has a unique induced type function by $\{i,j\}$. Therefore $(\rho_i\rho_j)^{m_{ij}}$ corresponds to a unique gallery $(K=K_0, K_1, \dots, K_n)$ of length $2m_{ij}$. Thus we obtain $K_n=K$ and $(\rho_i\rho_j)^{m_{ij}}(K) = K$.

It remains to show that $R^K$ are actually the real roots at $K$. Since $\varphi_{K',K}$ maps roots to roots, we have $(R\re)^K \subset R^K$ by Corollary \ref{mapconc},
so we need to check the other inclusion.
Let $\beta \in R$, and set $\beta_K = \phi_K(\beta)$. Let $K' \in \KKK_0$, such that $\beta \in B^{K'}$, and let $(K = K_0, K_1, \dots, K_m = K')$ be a gallery from $K$ to $K'$ with $K_{i-1}, K_i$ being $j_i$-adjacent.
Let $\alpha \in B^K$ such that 
$$\varphi_{K,K_1}\circ \cdots, \circ\varphi_{K_{m-1},K'}(\beta) = \alpha.$$
Then by Corollary \ref{mapconc}
$$
\beta_K = \phi_K(\beta)= \sigma_{j_1}^{K_1} \circ \cdots \circ \sigma_{j_m}^{K_m}\phi_{K'}(\beta),
$$
where we used the fact that by (C2), $\sigma_i^{K^i}\sigma_i^K = \id_{\ZZ^r}$ if $K^i$ is $i$-adjacent to $K$. Now $\beta \in K'$ yields that $\phi_{K'}(\beta)$ is in the standard basis, which proves $\beta_K \in (R\re)^K$. Hence $R^K = (R\re)^K$, which proves our assumption.
\end{proof}

\begin{rem}
It is easy to see that combinatorially equivalent crystallographic Tits arrangements $(\AAA, T)$ (w.r.t.\ $R$) and $(\AAA',T')$ (w.r.t.\ $R'$) yield equivalent Cartan graphs $\Cc(\AAA,T,R)$ and $\Cc(\AAA',T',R')$. Choosing a different type function of the simplicial complex $\SSSS$ also gives rise to equivalent Cartan graphs, which only differ by a permutation of $I$.
\end{rem}

\subsection{The additive property}

In this section we will discuss the additive property of root systems. Here we will also use some results for subarrangements, which we will prove later in Section \ref{subarr}.

\begin{defi}
Let $(\AAA,T)$ be a Tits arrangement associated to $R$, and fix $K \in \KKK$. Set
\begin{align*}
R^+ &:= R \cap \sum_{\alpha \in B^K} \RR_{\geq0} \alpha, \\
R^- &:= R \cap \sum_{\alpha \in B^K} \RR_{\leq0} \alpha,
\end{align*}
and call $R^+$ the \textit{positive roots (w.r.t.\ $K$)} and $R^-$ the \textit{negative roots (w.r.t.\ $K$)}.
\end{defi}

\begin{lem}
If $(\AAA,T,R)$ is a simplicial arrangement, then $R = R^+\ \dot\cup\ R^-$ for every $K \in \KKK$.
\label{Rposneg}
\end{lem}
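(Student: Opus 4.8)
The plan is to prove the asserted equality by splitting it into the set-theoretic union $R = R^+ \cup R^-$ and the disjointness $R^+ \cap R^- = \emptyset$. First I would note that $R^+ \cup R^- \subseteq R$ is trivial, since both $R^+$ and $R^-$ are by definition intersections of $R$ with a subset of $V^\ast$. For the reverse inclusion $R \subseteq R^+ \cup R^-$, I would simply invoke Lemma \ref{posorneg}, whose content is precisely that $R \subset \pm \sum_{\alpha \in B^K} \RR_{\geq 0}\alpha$. Thus any $\gamma \in R$ lies either in $\sum_{\alpha \in B^K}\RR_{\geq 0}\alpha$, giving $\gamma \in R^+$, or in $-\sum_{\alpha \in B^K}\RR_{\geq 0}\alpha = \sum_{\alpha \in B^K}\RR_{\leq 0}\alpha$, giving $\gamma \in R^-$. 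This settles $R = R^+ \cup R^-$.

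For disjointness I would use that $B^K$ is a basis of $V^\ast$, as recorded in the remark following the definition of the root basis. Consequently every $\gamma \in V^\ast$ admits a unique expansion $\gamma = \sum_{\alpha \in B^K} c_\alpha \alpha$ with $c_\alpha \in \RR$. Membership of $\gamma$ in $R^+$ forces $c_\alpha \geq 0$ for all $\alpha$, while membership in $R^-$ forces $c_\alpha \leq 0$ for all $\alpha$; by uniqueness of the coordinates, $\gamma \in R^+ \cap R^-$ would imply $c_\alpha = 0$ for every $\alpha \in B^K$, that is $\gamma = 0$. Since $0 \notin R$ by axiom 1) of Definition \ref{def:rs}, no such $\gamma$ exists, so the union is disjoint and $R = R^+\ \dot\cup\ R^-$.

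I do not anticipate any genuine obstacle here: the substantive input is entirely contained in Lemma \ref{posorneg}, and disjointness is a purely formal consequence of $B^K$ being a basis together with the exclusion of $0$ from a root system. The only points to state with care are the identification $-\sum_{\alpha \in B^K}\RR_{\geq 0}\alpha = \sum_{\alpha \in B^K}\RR_{\leq 0}\alpha$ and the appeal to the uniqueness of coordinates with respect to $B^K$.
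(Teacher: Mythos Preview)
Your proposal is correct and follows essentially the same approach as the paper: invoke Lemma~\ref{posorneg} for $R \subseteq R^+ \cup R^-$, and obtain disjointness from the fact that the two cones $\sum_{\alpha \in B^K}\RR_{\geq 0}\alpha$ and $\sum_{\alpha \in B^K}\RR_{\leq 0}\alpha$ meet only in $\{0\}$, together with $0 \notin R$. The paper merely states this intersection equals $\{0\}$ without further comment, whereas you spell out the uniqueness-of-coordinates justification, but the argument is the same.
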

\begin{proof}
Let $\alpha \in R$, then $\alpha \in R^+$ or $\alpha \in R^-$ by Lemma \ref{posorneg}.
The sets $R^+$, $R^-$ are disjoint since $\sum_{\alpha \in B^K} \RR_{\geq 0} \alpha \cap \sum_{\alpha \in B^K} \RR_{\leq 0} \alpha = \{0\}$, and $0 \notin R$.
\end{proof}

\begin{defi}
Let $(\AAA,T)$ be a Tits arrangement associated to $R$, and let $K \in \KKK$. We say that $B^K$ \textit{satisfies the additive property}, or shorter that \textit{$B^K$ is additive} if for
all $\alpha \in R^+$ either $\alpha \in B^K$ or $\alpha = \alpha_1 + \alpha_2$ with $\alpha_1, \alpha_2 \in R^+$.

If $B^K$ is additive for all $K \in \KKK$, then $(\AAA,T)$ is said to be \textit{additive (w.r.t.\ $R$)}.
\end{defi}

\begin{rem}
\begin{enumerate}
\item If $(\AAA,T)$ is additive, then $(\AAA,T)$ is also crystallographic. This is just a consequence from the definition.
\item In \cite[Corollary 3.8]{CH11} Cuntz and Heckenberger showed that spherical crystallographic arrangements in dimension 2 are additive. This statement is used in \cite[Theorem 2.10]{CH12} to show that every crystallographic spherical arrangement is additive, thus for spherical arrangements the additive property and the crystallographic property are equivalent. Note that both formulations above actually refer to Weyl groupoids.
\item An example of an affine crystallographic arrangement which is not additive in the above sense is the root system of $\tilde{A}_1$, which is
$$
R(\tilde{A}_1) = \{\alpha_1 + k \gamma, \alpha_2+k\gamma \mid k\in \ZZ\},
$$
where $\{\alpha_1, \alpha_2\}$ is a basis of $(\RR^2)^\ast$ and $\gamma = \alpha_1 + \alpha_2$. There is a chamber $K$ such that $B^K = \{\alpha_1, \alpha_2\}$, but $2\alpha_1 + \alpha_2$ is neither in $B^K$ nor a sum of two positive roots.
\end{enumerate}
\end{rem}

We now give a criterion for a crystallographic arrangement to be additive. The idea for the proof of the following statement is based on \cite[Theorem 2.10]{CH12}, but adapted to our notation.

\begin{prop}
Assume that $(\AAA,T)$ is a crystallographic Tits arrangement with respect to $R$ of rank $r \geq 3$. If $(\AAA,T)$ is $2$-spherical, then it is additive with respect to $R$.
\end{prop}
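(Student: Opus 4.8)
The plan is to fix a chamber $K$ with $B^K=\{\alpha_1,\dots,\alpha_r\}$ and to verify the additive property at $K$; since the argument will be uniform in $K$ this proves that $(\AAA,T)$ is additive. Let $\beta\in R^+$ with $\beta\notin B^K$. By Lemma \ref{Rposneg} and the crystallographic property we may write $\beta=\sum_{j}n_j\alpha_j$ with $n_j\in\NN_0$ and $\sum_j n_j\geq 2$, and we must produce $\delta_1,\delta_2\in R^+$ with $\beta=\delta_1+\delta_2$. Put $J:=\supp(\beta)=\{j\mid n_j\neq 0\}$, so $|J|\geq 2$ and $\beta$ has full support when viewed inside the subspace $\langle\alpha_j\mid j\in J\rangle$. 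The strategy, following \cite[Theorem 2.10]{CH12}, is to locate $\beta$ inside a rank $2$ subsystem which is \emph{spherical}, in which $\beta$ is a non-simple positive root, and then to invoke the rank $2$ additivity result \cite[Corollary 3.8]{CH11}.

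Concretely, fix any $i\in J$ and set $P:=R\cap\langle\beta,\alpha_i\rangle$, together with $C:=\sum_{j}\RR_{\geq 0}\alpha_j$, $C_J:=\sum_{j\in J}\RR_{\geq 0}\alpha_j$, and the pointed sector $S:=\langle\beta,\alpha_i\rangle\cap C_J$. Since $\{\alpha_j\}$ is a basis of $V^\ast$ one checks $\langle\alpha_j\mid j\in J\rangle\cap C=C_J$, and hence, using $R=R^+\,\dot\cup\,R^-$ and the pointedness of $C$, the set $P^+:=P\cap C_J=P\cap R^+$ is a positive system of the rank $2$ system $P$, whose two simple roots are the extreme rays of $S$; moreover $P^+\subseteq R\cap C=R^+$. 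Because $\beta$ has full support in $J$ it lies in the relative interior of $C_J$, hence in the relative interior of $S$, so $\beta$ is \emph{not} simple in $P$. Granting that $P$ is a spherical crystallographic root system of rank $2$, \cite[Corollary 3.8]{CH11} applies and yields $\beta=\delta_1+\delta_2$ with $\delta_1,\delta_2\in P^+\subseteq R^+$, which is exactly what is required.

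The step I expect to be the main obstacle is the sphericity of $P$, and this is precisely the point where the hypothesis of $2$-sphericity and the subarrangement machinery of Section \ref{subarr} enter. The hyperplanes of $P$ are exactly those of the localisation of $\AAA$ at the codimension $2$ subspace $Z:=\beta^\perp\cap\alpha_i^\perp$, so $P$ is spherical as soon as $Z$ supports a codimension $2$ simplex meeting $T$: localising at an interior point of $T$ lying on $Z$ then gives a spherical rank $2$ arrangement, exactly as in the argument showing that $\rsC$ satisfies (R4). The delicate issue is that $\beta^\perp$ and $\alpha_i^\perp$ need not be walls of a common chamber, so $Z$ is not manifestly a face support; one must show, from $(\AAA,T)$ being $2$-spherical, that \emph{every} codimension $2$ intersection of root hyperplanes supports a simplex meeting $T$ (equivalently, that every rank $2$ subsystem is spherical) and that the resulting $P$ is crystallographic. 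This is exactly the content I would extract from Section \ref{subarr}; once it is available, the full-support reduction above together with the rank $2$ result \cite[Corollary 3.8]{CH11} completes the proof without any further induction.
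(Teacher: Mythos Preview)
Your strategy of slicing by $P=R\cap\langle\beta,\alpha_i\rangle$ with $\alpha_i\in B^K$ has a genuine gap, and the difficulty is precisely the one you flag but do not resolve. The definition of $2$-spherical only guarantees that codimension~$2$ \emph{simplices of $\SSSS$} meet $T$; it says nothing about an arbitrary intersection $\beta^\perp\cap\alpha_i^\perp$, which need not be the support of any face. Worse, even when $P$ is finite, your argument that $\beta$ is not simple in $P^+$ is incorrect: you identify the simple roots of $P^+$ with the extreme rays of $S=\langle\beta,\alpha_i\rangle\cap C_J$, but the cone spanned by $P^+$ can be strictly smaller than $S$ (the extreme rays of $S$ need not contain roots), and $\beta$ can sit on its boundary. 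Both failures already occur in $\tilde A_2$: with simple roots $\alpha_1,\alpha_2,\alpha_3$ and $\delta=\alpha_1+\alpha_2+\alpha_3$, take $\beta=\alpha_1+\delta=2\alpha_1+\alpha_2+\alpha_3$. For $i=1$ one has $\delta\in\langle\beta,\alpha_1\rangle$, so $P$ is an infinite $\tilde A_1$-subsystem and rank~$2$ additivity is unavailable (this is exactly the paper's $\tilde A_1$ counterexample). For $i=2$ or $i=3$ the set $P$ is finite, namely $\{\pm\alpha_i,\pm\beta,\pm(\beta+\alpha_i)\}$, and $\beta$ is \emph{simple} in $P^+$; the other extreme ray of $S$ is $\RR_{\ge0}(2\alpha_1+\alpha_{5-i})$, which contains no root. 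Thus no choice of $i\in J$ rescues the argument, although of course $\beta=(\alpha_1+\alpha_2)+(\alpha_1+\alpha_3)$ is additive in $R$.

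The paper's proof avoids both obstructions by choosing the rank~$2$ slice at a different chamber: one first walks along a minimal gallery to a chamber $K$ with $\beta\in B^{K}$, and slices with another wall $\beta_i\in B^{K}$ of that same chamber (namely the last wall crossed by the gallery). Then $\beta^\perp\cap\beta_i^\perp\cap\ol K$ is a genuine codimension~$2$ simplex, so $2$-sphericity applies directly and the localisation is spherical. To recover the positive system relative to the original chamber $K_0$ one uses that the star of this face is gated and takes the gate $G$ from $K_0$; the minimality of the gallery together with the gate property then forces $\beta\notin B^G$, so $\beta$ is non-simple in the induced rank~$2$ system and \cite[Corollary 3.8]{CH11} finishes the job. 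The moral is that the rank~$2$ reduction must be anchored at a chamber where $\beta$ is a wall, not at $K_0$.
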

\begin{proof}
Let $K_0 \in \KKK$ and $\beta \in R^+$ w.r.t.\ $K_0$. Let $K \in \KKK$, such that $\beta \in B^{K}$ and assume $d(K_0,K)= m$. Fix a minimal gallery $\gamma = (K_0, K_1, \dots, K_m=K)$. Let $B^{K} = \{\beta_1, \dots, \beta_r\}$, where $\beta = \beta_1$. If $m = 0$, $\beta$ is already in $B^{K_0}$ and we are done.
So let $m \geq 1$ and assume $\beta \notin B^{K_0}$.

Now assume $K$ and $K_{m-1}$ are $i$-adjacent. As $\beta \in R^+$, we find $D_{\beta^\perp}(K_0) = D_{\beta^\perp}(K)$, hence $i \neq 1$ and a minimal gallery between them can not cross $\beta^\perp$. So let $F := \beta_1^\perp \cap \beta_i^\perp \cap \ol{K}$, then $F$ is an $(n-3)$-simplex by construction.

As $(\AAA,T)$ is $2$-spherical, $F \cap T$ is not empty. Let $x \in F \cap T$ such that $\AAA_x \cap W^K = \{\beta_1^\perp, \beta_i^\perp\}$.
Then $R_x$ is contained in $\langle \beta_1, \beta_i \rangle$, $\KKK_x$ corresponds to the chambers of the star $St(F)$ of $F$. Now $St(F)$ is a gated set by \cite[Proposition 2.27]{CMW}, so let $G \in \KKK_x$ be the unique gate from $K_0$ to $\KKK_x$. Then $B^G \cap R_x = \{\alpha_1, \alpha_2\}$ for some $\alpha_1, \alpha_2 \in \langle \beta_1, \beta_i \rangle$. Let $H_i = \alpha_i^\perp$ for $i=1,2$. By construction of $G$ we find $D_{H_i}(G) = D_{H_i}(K)$ for $i=1,2$ and therefore $K \subset \alpha_1^+ \cap \alpha_2^+$, by Lemma \ref{purelc} the roots $\alpha_1, \alpha_2$ are positive with respect to $K$.

By Corollary \ref{cryst:globtoloc} $R_x$ itself is a crystallographic root system in dimension 2, and $\{\alpha_1, \alpha_2\}$ is a root basis. By construction $G \subset \beta^+$, as $\beta_1 \in R_x$, and again by Lemma \ref{purelc} we obtain that $\beta$ is a positive linear combination of $\alpha_1, \alpha_2$. From \cite[Corollary 3.8]{CH11} it follows that $R_x$ is additive, so $\beta$ is either in $\{\alpha_1, \alpha_2\}$ or sum of two positive roots $\alpha_1', \alpha_2'$ in $R^+ \cap \langle \alpha_1, \alpha_2 \rangle$. In the latter case we are done, as $\alpha_1', \alpha_2'$ are also positive with respect to $K$, since they are positive linear combinations of $\alpha_1, \alpha_2$, which are positive w.r.t.\ $K$.

So it remains to check that $\beta \neq \alpha_1, \alpha_2$. As $G$ is the gate from $K$ to $\KKK_x$, we can assume that there exists an index $0 \leq j \leq m$ such that $K_j = G$ in the above gallery. Assume $\beta = \alpha_1$, then $\beta \in B^G$ and the minimality of the gallery yields $j=m$. But we assumed $K_{m-1}$ and $G$ are $i$-adjacent, which means that $K_{m-1} \in \KKK_x$, a contradiction to the gate property.
So $\beta \neq \alpha_1, \alpha_2$ and we are done.
\end{proof}

\section{The geometric realisation of a connected simply connected Weyl groupoid}
\label{geom.real}

In the previous section, we constructed a Cartan graph from a given crystallographic simplicial arrangement. The aim of this section is to give a canonical crystallographic Tits arrangement associated to a given connected simply connected Cartan graph with real roots.

For this section, assume $\Cc = \Cc(I, A, (\rho_i)_{i\in I}, (C^a)_{a \in A})$ to be a connected simply connected Cartan graph of rank $r$ with real roots $\rsC\re = \rsC(\Cc, ((R\re)^a)_{a\in A})$, and fix some $a \in A$. Furthermore, assume that $\rsC\re$ is a root system of type $\Cc$.
By \cite[Proposition 2.9]{p-CH09a}, this is equivalent to the existence of a root system of type $\Cc$.

\begin{defi}
Let $V = \RR^r$, $I = \{1, \dots, r\}$ and let $B := \{\alpha_i \mid i \in I\}$ be the standard basis of $\ZZ^r$. Assume $\{\beta_i \mid i \in I\}$ is a basis of $V^\ast$. Let $\psi: \ZZ^r \to V$ be the unique $\ZZ$-linear map given by $\alpha_i \mapsto \beta_i$.

Define $R := \psi((R\re)^a)$ and $\AAA := \{r^\perp \mid r \in R\}$. For $b \in A$ with $\Hom(a,b) = \{w\}$, define the map $\psi_b: (R\re)^b \to R$ as $\psi_b = \psi w^{-1}$. In particular, $\psi_a = \psi|_{(R\re)^a}$.
Further let
$$
B^{b} := \psi_b(B)
$$
for all $b \in A$.
Given $B^b$, set 
$$
K^b := \bigcap_{\beta \in B^b} \beta^+,
$$
and let $\KKK = \{K^b \mid b \in A\}$. Note that $B^b$, $K^b$ are defined (and well defined) for all $b\in A$, since $\Cc$ is connected (and simply connected).
The \textit{walls of $K^b$} are the elements of
$$
W^b := \{\alpha^\perp \mid \alpha \in K^b\}.
$$
Let $w \in \Hom(a,b)$ and $i \in I$. We call $K^b \neq K^{b'}$ \textit{$i$-adjacent} if $$\langle\ol{K^b} \cap \ol{K^{b'}} \rangle = \psi_b(\alpha_i)^\perp.$$
We say $K^b$ and $K^{b'}$ are \textit{adjacent} if they are $i$-adjacent for some $i \in I$.
\label{def:CStoSA}
\end{defi}

\begin{fol}
With the above definitions,
$$
\psi_b = \psi_{b'}w
$$
for arbitrary $b,b' \in A$ and $w \in \Hom(b,b')$.
\label{mapconc2}
\end{fol}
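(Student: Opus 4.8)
The plan is to reduce everything to the uniqueness of morphisms in a connected, simply connected groupoid, together with the fact that, in the conventions of this paper, composition in $\Wg(\Cc)$ is realised by composition of the associated elements of $\Aut(\ZZ^r)$. First I would record that, since $\Cc$ is connected and simply connected, $\Hom(c,d)$ is a singleton for every pair $c,d\in A$: if $w_1,w_2\in\Hom(c,d)$, then $w_2^{-1}w_1\in\Hom(c,c)=\{\id^c\}$, so $w_1=w_2$. This is exactly what makes the notation $\psi_b=\psi\,w^{-1}$ of Definition \ref{def:CStoSA} well defined, and it lets me write $\omega_c$ for the unique morphism in $\Hom(a,c)$, so that $\psi_c=\psi\,\omega_c^{-1}$ for every $c\in A$ (for $c=a$ this recovers $\psi_a=\psi|_{(R\re)^a}$, since $\omega_a=\id^a$).

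Next I would fix $b,b'\in A$ and the unique $w\in\Hom(b,b')$, and observe that the composite $w\circ\omega_b$ lies in $\Hom(a,b')$; by uniqueness it must equal $\omega_{b'}$. Here I use that groupoid composition agrees with composition of the underlying linear maps in the same orientation as in Corollary \ref{mapconc} (and as in the relation $\sigma_i^K(R^K)=R^{\rho_i(K)}$ used to establish (R3)), so that $\omega_{b'}=w\,\omega_b$ holds as an identity of elements of $\Aut(\ZZ^r)$ as well. The statement then follows by a direct computation using $(w\,\omega_b)^{-1}=\omega_b^{-1}w^{-1}$:
\[
\psi_{b'}\,w \;=\; \psi\,\omega_{b'}^{-1}\,w \;=\; \psi\,(w\,\omega_b)^{-1}\,w \;=\; \psi\,\omega_b^{-1}\,w^{-1}w \;=\; \psi\,\omega_b^{-1} \;=\; \psi_b .
\]

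I would then close by confirming that this is an equality of maps $(R\re)^b\to R$: by (R3) the morphism $w$ sends $(R\re)^b$ to $(R\re)^{b'}$, while $\psi_{b'}$ is defined on $(R\re)^{b'}$, so both $\psi_{b'}\,w$ and $\psi_b$ have domain $(R\re)^b$ and image in $R=\psi((R\re)^a)$. The algebra is routine; the only step requiring genuine care — and the main potential obstacle — is the bookkeeping of the composition conventions, i.e.\ making sure the abstract identity $w\circ\omega_b=\omega_{b'}$ translates into the correct order of the linear maps and that $w$ acts in the direction $(R\re)^b\to(R\re)^{b'}$ rather than the reverse. Once this orientation is pinned down consistently with the earlier sections, the corollary is immediate.
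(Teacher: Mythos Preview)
Your argument is correct and is exactly the natural unfolding of Definition~\ref{def:CStoSA}: the paper states this corollary without proof, treating it as immediate from the definition of $\psi_b=\psi w^{-1}$ together with the uniqueness of morphisms in a connected simply connected groupoid. Your computation $\psi_{b'}w=\psi\,\omega_{b'}^{-1}w=\psi\,(w\omega_b)^{-1}w=\psi\,\omega_b^{-1}=\psi_b$ is precisely what the paper leaves implicit, and your care about composition conventions is well placed but ultimately unnecessary here since the paper's conventions (cf.\ Corollary~\ref{mapconc}) match the standard ones you use.
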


\begin{lem}
For $b \in A$, $K^b$ is a simplicial cone, and $H$ does not meet $K^b$ for all $H \in \AAA$.
\label{WG:simpcones}
\end{lem}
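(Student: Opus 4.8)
The plan is to prove the two assertions separately, both of which reduce to the single fact that $\psi_b$ is a linear isomorphism which carries the sign structure of the real roots over to $V^\ast$.

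First I would check that $K^b$ is simplicial. Since $\Cc$ is connected and simply connected, $\Hom(a,b) = \{w\}$ is a singleton, so $\psi_b = \psi w^{-1}$ is well defined. The map $\psi$ sends the standard basis $B = \{\alpha_i\}$ of $\ZZ^r$ to the basis $\{\beta_i\}$ of $V^\ast$, hence extends to an $\RR$-linear isomorphism $\RR^r \to V^\ast$; composing with the automorphism $w^{-1}$ of $\ZZ^r$ shows that $\psi_b$ is again an $\RR$-linear isomorphism $\RR^r \to V^\ast$. Therefore $B^b = \psi_b(B)$ is a basis of $V^\ast$, and by Remark \ref{rem:scs} the cone $K^b = \bigcap_{\beta \in B^b} \beta^+ = K^{B^b}$ is an open simplicial cone.

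Next I would fix $\gamma \in R$ and show $\gamma^\perp \cap K^b = \emptyset$. By (R3) the map $w^{-1}$ carries $(R\re)^b$ bijectively onto $(R\re)^a$ and $\psi$ carries $(R\re)^a$ bijectively onto $R$, so $\psi_b \colon (R\re)^b \to R$ is a bijection; set $\delta := \psi_b^{-1}(\gamma) \in (R\re)^b$. Writing $\beta_i^b := \psi_b(\alpha_i)$, so that $B^b = \{\beta_i^b \mid i \in I\}$, axiom (R1) gives $\delta = \sum_{i \in I} n_i \alpha_i$ with either all $n_i \geq 0$ or all $n_i \leq 0$, and applying $\psi_b$ yields $\gamma = \sum_{i \in I} n_i \beta_i^b$. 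For $x \in K^b$ we have $\beta_i^b(x) > 0$ for every $i$, so in the first case $\gamma(x) = \sum_{i} n_i \beta_i^b(x) \geq 0$; since $0 \notin R$ some $n_j > 0$, forcing $\gamma(x) > 0$, and the second case is symmetric with $\gamma(x) < 0$. Hence $\gamma$ has constant strict sign on $K^b$, so $\gamma^\perp$ misses $K^b$; as $\AAA = \{\gamma^\perp \mid \gamma \in R\}$, this proves the second claim.

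The argument is largely bookkeeping with the maps $\psi$, $w$ and $\psi_b$, and the only step needing genuine care is the final sign estimate: Lemma \ref{purelc} alone would yield only the nonstrict inequality $\gamma(x) \geq 0$, so the strictness must be extracted from the openness of $K^b$ together with $\gamma \neq 0$. I expect this to be the main (though minor) obstacle.
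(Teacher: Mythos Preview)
Your proof is correct and follows essentially the same route as the paper: both establish that $B^b$ is a basis (so $K^b$ is simplicial) and then use (R1) together with $\psi_b((R\re)^b) = R$ to write every $\gamma \in R$ as a purely non-negative or non-positive combination of $B^b$, concluding that $\gamma^\perp$ misses the open cone. The only difference is cosmetic: the paper phrases the last step via Lemma~\ref{purelc} and the vertices of the dual simplex, whereas you evaluate $\gamma$ directly on points of $K^b$ and make the strictness explicit using $\gamma \neq 0$.
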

\begin{proof}
The set $K^b$ is a simplicial cone by definition, as the sets $B^b$ are bases by construction. Let $w \in \Hom(a,b)$, then $w((R\re)^a) = (R\re)^b$, and we have
$\psi_b(B) = B^b$ as well as $\psi_b((R\re)^b) = R$.
Therefore $R \subset \pm\sum_{\alpha \in B^b} \NN_0 \alpha$. By Lemma \ref{purelc}, we obtain for every $H \in \AAA$ that the vertices of an open simplex $S$ with $K^b = \RR_{>0}S$ which are not contained in $H$ are on the same side of $H$.
\end{proof}

\begin{lem}\label{lemspan}
Let $b \in A$, $H \in W^b$, then $\ol{K^b} \cap H$ spans $H$.
\end{lem}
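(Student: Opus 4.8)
The plan is to reduce the statement to the explicit dual-basis description of a simplicial cone provided in Remark and Definition \ref{rem:scs}. Everything reduces to the observation that a wall of $K^b$ is the orthogonal complement of one root-basis vector, and that the remaining $r-1$ dual basis vectors already lie in $\ol{K^b}\cap H$ and span $H$.

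\textbf{Setup.} By Lemma \ref{WG:simpcones} the cone $K^b$ is the open simplicial cone associated to the basis $B^b = \{\beta_1,\dots,\beta_r\}$ of $V^\ast$, so $K^b = K^{B^b} = \bigcap_{i} \beta_i^+$. First I would pass to the dual picture: let $C = \{v_1,\dots,v_r\}\subset V$ be the basis dual to $B^b$, characterised by $\beta_i(v_j) = \delta_{ij}$. By Remark and Definition \ref{rem:scs} we then have $\ol{K^b} = \{\sum_{j=1}^r \lambda_j v_j \mid \lambda_j \ge 0\}$, and $C$ is exactly (a set of representatives of) the vertex set of the simplex $\SSSS_{K^b}$.

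\textbf{Key steps.} A wall $H \in W^b$ has the form $H = \beta_i^\perp$ for some $i \in I$. For every $j \neq i$ we have $\beta_i(v_j) = 0$, so $v_j \in H$; and since $v_j$ is a non-negative (indeed, coordinate) combination of the $v_k$, we have $v_j \in \ol{K^b}$. Hence $v_j \in \ol{K^b}\cap H$ for all $j \neq i$. The $r-1$ vectors $\{v_j \mid j \neq i\}$ are linearly independent, being part of a basis of $V$, and they are contained in the $(r-1)$-dimensional subspace $H$; therefore $\langle v_j \mid j \neq i\rangle = H$. Combining the inclusions $H = \langle v_j \mid j \neq i\rangle \subseteq \langle \ol{K^b}\cap H\rangle \subseteq H$ gives $\langle \ol{K^b}\cap H\rangle = H$, which is the claim.

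\textbf{Main obstacle.} There is no substantial obstacle: once the dual basis $C$ is introduced, the argument is a direct computation. The only point requiring care is the correct identification of a wall $H\in W^b$ as $\beta_i^\perp$ for a single root-basis vector $\beta_i$, together with the verification that the complementary dual vectors genuinely lie in the \emph{closed} cone $\ol{K^b}$ and not merely in its linear span; both follow immediately from the description in Remark and Definition \ref{rem:scs}.
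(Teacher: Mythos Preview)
Your proof is correct and is precisely the content behind the paper's one-line argument ``This follows as $K^b$ is a simplicial cone.'' You have simply unpacked that sentence using the dual-basis description from Remark and Definition \ref{rem:scs}, which is exactly the intended justification.
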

\begin{proof}
This follows as $K^b$ is a simplicial cone.
\end{proof}

\begin{defi}
For $H \in \AAA$, Lemma \ref{lemspan} yields that every $K^b$ is contained in a unique halfspace associated to $H$. We denote this halfspace by $D_H(K^b)$. For the halfspace not containing $K^b$ we write $-D_H(K^b)$.

We say that $H$ \textit{separates} $K^b$ and $K^{b'}$ for $b,b' \in A$ if $D_H(K^b) = -D_H(K^{b'})$, and set
$$S(K^b, K^{b'}) = \{H \in \AAA \mid D_H(K^b) = -D_H(K^{b'})\}.$$
Furthermore let $T$ be the convex hull of all $K^b$, $b \in A$.
\end{defi}

\begin{lem} (\cite[Lemma B.2]{CMW})
Let $C = \{v_1, \dots, v_r\}$, $C' = \{v_1', \dots, v_r'\}$ be bases of $V$ such that $K^{C} = K^{C'}$. Then, up to permutation, $\alpha_i = \lambda_i\alpha_i'$ for some $\lambda_i \in \RR_{>0}$ for all $1 \leq i \leq r$.

The same holds for two bases $B,B'$ of $V^\ast$ such that $K^B = K^{B'}$.
\label{equalcones}
\end{lem}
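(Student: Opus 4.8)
The plan is to reduce both assertions to the observation that the extreme rays (in the case of bases of $V$) and the walls (in the case of bases of $V^\ast$) of a simplicial cone are determined by the cone alone, independently of the basis used to describe it. Once that intrinsic characterization is in place, the conclusion is immediate from $K^C = K^{C'}$ (resp.\ $K^B = K^{B'}$), and all that remains is to pin down the positivity of the scalars.

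First I would treat the case of bases $C, C'$ of $V$. By Remark \ref{rem:scs} the closed cone $\ol{K^C} = \{\sum_i \lambda_i v_i \mid \lambda_i \geq 0\}$ is the topological closure of $K^C$, and I claim its extreme rays are exactly the rays $\RR_{\geq 0} v_i$. On the one hand each $\RR_{\geq 0} v_i$ is extreme: if $v_i = y + z$ with $y, z \in \ol{K^C}$, expanding $y$ and $z$ in the basis $C$ forces every coefficient except the $i$-th to vanish, so $y, z \in \RR_{\geq 0} v_i$. On the other hand, an extreme ray $\RR_{\geq 0} x$ with $x = \sum_i \lambda_i v_i$, $\lambda_i \geq 0$, cannot have two strictly positive coefficients: writing $x = \lambda_{i_1} v_{i_1} + (x - \lambda_{i_1} v_{i_1})$ exhibits $x$ as a sum of two cone elements, and extremality would force $v_{i_1}$ to be proportional to $x$, contradicting the presence of a second positive coefficient. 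Hence $x$ is a positive multiple of some $v_i$, and the set of extreme rays of $\ol{K^C}$ equals $\{\RR_{\geq 0} v_i \mid 1 \leq i \leq r\}$.

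Since $K^C = K^{C'}$ implies $\ol{K^C} = \ol{K^{C'}}$, and the set of extreme rays depends only on the cone, the two ray-sets coincide. Thus there is a permutation $\pi$ with $\RR_{\geq 0} v_i = \RR_{\geq 0} v'_{\pi(i)}$, that is $v_i = \lambda_i v'_{\pi(i)}$ with $\lambda_i \in \RR_{>0}$, which is the first claim (the stated $\alpha_i = \lambda_i \alpha_i'$ being a typo for the basis vectors).

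For bases $B, B'$ of $V^\ast$ I would argue dually. Passing to the dual bases $C, C'$ of $V$ gives $K^C = K^B = K^{B'} = K^{C'}$ by Remark \ref{rem:scs}, so the first part yields $v_i = \lambda_i v'_{\pi(i)}$; evaluating $\alpha'_{\pi(i)}$ on each $v_j$ via $\alpha_j(v_i) = \delta_{ij}$ and $\alpha'_j(v'_i) = \delta_{ij}$ then shows $\alpha'_{\pi(i)} = \lambda_i \alpha_i$, again a positive multiple. Alternatively one can argue directly that the walls $\{\alpha^\perp \mid \alpha \in B\}$ are precisely the facet hyperplanes of $K^B$, hence intrinsic, and recover the sign from the fact that every $\alpha \in B$ is positive on $K^B = K^{B'}$. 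The only points requiring genuine care are the converse inclusion in the extreme-ray characterization and the positivity of the scalars $\lambda_i$; both follow from $K^C$ lying in the open cone of positive combinations of $C$, and no real obstacle arises beyond this bookkeeping.
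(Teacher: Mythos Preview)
The paper does not supply a proof of this lemma; it simply quotes the statement with a reference to \cite[Lemma B.2]{CMW}. So there is no in-paper argument to compare against.

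Your argument is correct. The identification of the extreme rays of $\ol{K^C}$ with the rays $\RR_{\geq 0} v_i$ is the standard way to see that the basis $C$ is determined by the cone up to permutation and positive scaling, and your verification of both inclusions is clean. The dualization step for the $V^\ast$-statement is also fine: from $v_i = \lambda_i v'_{\pi(i)}$ one gets $\alpha'_{\pi(i)}(v_j) = \lambda_j \delta_{\pi(i),\pi(j)} = \lambda_i \delta_{ij}$, hence $\alpha'_{\pi(i)} = \lambda_i \alpha_i$ as you write. Your remark that the displayed conclusion $\alpha_i = \lambda_i \alpha_i'$ in the first assertion is a typo for the vectors $v_i$ is well taken.
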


We will need the following characterization of walls.

\begin{lem}
Assume $b \in A$ and let $H \subset V$ be a hyperplane. Then $H \in W^b$ if and only if $H \cap K^b = \emptyset$ and $\langle H \cap \ol{K^b} \rangle = H$.
\label{char:wall}
\end{lem}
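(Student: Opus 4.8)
The plan is to prove both implications from the simplicial cone structure of $K^b$ together with Lemma \ref{purelc}. Throughout, write $B^b = \{\beta_1,\dots,\beta_r\}$, so that $K^b = \bigcap_{i} \beta_i^+$ and $W^b = \{\beta_i^\perp \mid i \in I\}$, and let $C = \{c_1,\dots,c_r\} \subset V$ be the dual basis, so that $K^b = K^C$ and $\ol{K^b} = \{\sum_i \mu_i c_i \mid \mu_i \geq 0\}$ with $\beta_i(\sum_j \mu_j c_j) = \mu_i$. The forward implication is then immediate: if $H = \beta_i^\perp \in W^b$, then $K^b \subseteq \beta_i^+$ is disjoint from $\beta_i^\perp = H$, so $H \cap K^b = \emptyset$, and the identity $\langle H \cap \ol{K^b}\rangle = H$ is exactly the content of Lemma \ref{lemspan}.

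For the reverse implication, write $H = \gamma^\perp$ with $0 \neq \gamma \in V^\ast$. Since $K^b$ is convex, hence connected, and disjoint from $H$, it lies entirely in one of the two open halfspaces bounded by $H$; after replacing $\gamma$ by $-\gamma$ if necessary, I may assume $\gamma(x) > 0$ for all $x \in K^b$. By continuity this gives $\gamma(c_i) \geq 0$ for every $i$, so Lemma \ref{purelc} yields $\gamma = \sum_i \lambda_i \beta_i$ with all $\lambda_i \geq 0$; I set $J = \{i \mid \lambda_i > 0\}$, which is nonempty because $\gamma \neq 0$.

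It remains to pin down $J$ using the spanning hypothesis, and this is the only real obstacle. For $x = \sum_j \mu_j c_j \in \ol{K^b}$ one has $\gamma(x) = \sum_i \lambda_i \mu_i$, a sum of nonnegative terms since $\lambda_i \geq 0$ and $\mu_i \geq 0$; hence $\gamma(x) = 0$ forces $\mu_i = 0$ for every $i \in J$. Therefore $H \cap \ol{K^b} \subseteq \langle c_j \mid j \notin J\rangle$, a subspace of dimension $r - |J|$. The hypothesis $\langle H \cap \ol{K^b}\rangle = H$ now gives $r - 1 = \dim H \leq r - |J|$, so $|J| \leq 1$ and thus $|J| = 1$. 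Writing $J = \{i\}$ we obtain $\gamma = \lambda_i \beta_i$ with $\lambda_i > 0$, whence $H = \gamma^\perp = \beta_i^\perp \in W^b$. The crucial point that makes the dimension count work is that on $\ol{K^b}$ both the coefficients $\lambda_i$ and the coordinate functionals $\beta_i$ are nonnegative, which confines the vanishing locus of $\gamma$ inside $\ol{K^b}$ to the face on which all coordinates indexed by $J$ vanish; everything else is routine bookkeeping with the dual basis and a single invocation of Lemma \ref{purelc}.
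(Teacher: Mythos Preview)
Your proof is correct. Both implications go through as you claim, and the reverse direction is actually cleaner than the paper's own argument: you make explicit the key step that the paper leaves implicit, namely that $H \cap \ol{K^b}$ is precisely the face $\ol{K^b} \cap \bigcap_{i \in J} \beta_i^\perp$, and then the dimension count is transparent.

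The overall strategy is the same as the paper's---show that $H \cap \ol{K^b}$ is a face of the closed simplicial cone and that the spanning hypothesis forces it to have codimension one---but the executions differ. The paper argues geometrically via the simplex $S$ with $\ol{K^b} = \RR_{>0}S \cup \{0\}$: it asserts (without much justification) that a maximal face $F$ of $S$ lies in $H$, and since each maximal face is contained in a unique $\beta_i^\perp$, the conclusion follows. You instead work in coordinates with the dual basis, invoke Lemma~\ref{purelc} to write $\gamma = \sum_i \lambda_i \beta_i$ with $\lambda_i \geq 0$, and read off $H \cap \ol{K^b} \subseteq \langle c_j \mid j \notin J\rangle$ directly. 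Your route is more self-contained and avoids the unproved claim about maximal faces; the paper's route is shorter but leans on the reader's familiarity with the face lattice of a simplicial cone. As a minor remark, your appeal to Lemma~\ref{purelc} is not really needed: since $C$ is dual to $B^b$ you have $\gamma(c_i) = \lambda_i$ immediately, so the nonnegativity of the $\lambda_i$ follows without quoting the lemma.
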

\begin{proof}
Assume $H \in W^b$ and let $\alpha \in B^b$ such that $\alpha^\perp = H$. Since $K^b \subset \alpha^+$, $K^b \cap H =\emptyset$. By definition of $K^b$ the set $\ol{K^b} \cap H$ is not empty. Let $S$ be a closed simplex such that $\ol{K^b} = \RR_{>0}S \cup \{0\}$, by Remark \ref{rem:scs} it follows that there exists a maximal face $F$ of $S$ contained in $H$. But $F$ has an $n-2$-dimensional affine space as its affine span, therefore its linear span is a hyperplane. Furthermore $F \subset H \cap \ol{K^b}$, hence $\langle H \cap \ol{K^b} \rangle = H$.

Now assume $H \cap K^b = \emptyset$ and $\langle H \cap \ol{K^b} \rangle = H$ both hold. The set $K^b$ is a simplicial cone, from Remark \ref{rem:scs} we obtain that there exist elements $\beta_1, \dots, \beta_r \in V^\ast$ such that 
$$\ol{K^b} = \bigcap_{i=1}^r \beta_i^+.$$
By using \ref{equalcones} we can assume $B^b = \{\beta_1, \dots, \beta_r\}$. Let $S$ be as above, then we find a maximal face $F$ of $S$ such that $F \subset H$, but every face of $S$ is contained in a unique hyperplane $\beta_i^\perp$, which proves our claim.
\end{proof}

\begin{lem}
The map $A \to \KKK$, $b \mapsto K^b$, is a bijection.
\label{bKbbij}
\end{lem}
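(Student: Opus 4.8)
The plan is to note that surjectivity is immediate---by Definition~\ref{def:CStoSA} the set $\KKK$ is defined to be $\{K^b \mid b \in A\}$, so $b \mapsto K^b$ is onto by construction---and to concentrate entirely on injectivity. So I would suppose $K^b = K^{b'}$ for $b, b' \in A$ and aim to conclude $b = b'$.

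First I would pass from the cones to their defining root bases. Each $K^b$ is the open simplicial cone $K^{B^b}$ with $B^b = \psi_b(B)$, so Lemma~\ref{equalcones} (in its version for bases of $V^\ast$) yields that $B^b$ and $B^{b'}$ agree up to permutation and positive scalar multiples. To upgrade this to genuine equality of sets I would use that $R$ is reduced: every real root $\gamma$ satisfies $\ZZ\gamma \cap (R\re)^a = \{\pm\gamma\}$, which is axiom (R2) for the simple roots transported by (R3) to the whole orbit that makes up $(R\re)^a$; since $\psi$ is injective and $\ZZ$-linear, this reducedness passes to $R = \psi((R\re)^a)$. As $B^b, B^{b'} \subseteq R$, any positive scalar relating two of their elements must equal $1$, whence $B^b = B^{b'}$ as subsets of $R$.

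Next I would unwind the definition $B^b = \psi w_b^{-1}(B)$, where $w_b \in \Hom(a,b)$ is the unique morphism (well defined because $\Cc$ is connected and simply connected) and $\psi$ is injective. Equality $B^b = B^{b'}$ then forces $w_b^{-1}(B) = w_{b'}^{-1}(B)$ inside $\ZZ^r$, so that $w := w_{b'} w_b^{-1} \in \Hom(b,b')$ satisfies $w(B) = B$; in other words $w$ permutes the simple roots $\alpha_1, \dots, \alpha_r$. Since by (R1) the positive roots are exactly the nonnegative integral combinations of the $\alpha_i$, a permutation of $B$ maps positive roots to positive roots, and the same applies to $w^{-1}$; hence $w$ maps the positive real roots at $b$ bijectively onto those at $b'$.

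Finally I would invoke the length characterization of morphisms of a Weyl groupoid (Heckenberger--Yamane \cite{HY08}, see also \cite{p-CH09a}): the length $\ell(w)$ equals the number of positive roots at $b$ sent by $w$ to negative roots at $b'$. The previous step shows this number is $0$, so $\ell(w) = 0$, i.e.\ $w$ is an identity morphism; as identities have equal source and target, $b = b'$, which proves injectivity. The main obstacle is exactly this final step: translating the geometric coincidence $K^b = K^{b'}$ into the statement that $w$ fixes the fundamental system, and then arguing that no nontrivial morphism can fix it---which is where the length theory of Weyl groupoids, rather than anything established so far in the paper, does the real work.
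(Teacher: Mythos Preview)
Your proof is correct and follows essentially the same route as the paper, which simply cites \cite[Lemma~8(iii)]{HY08} to obtain (C3) and asserts that (C3) implies injectivity. Your version makes explicit the step the paper leaves implicit: from $K^b = K^{b'}$ one extracts that the unique $w \in \Hom(b,b')$ permutes the simple roots, and then the length formula from \cite{HY08} (the same source the paper invokes) forces $\ell(w)=0$, whence $w=\id$ and $b=b'$ --- so both arguments rest on the same external input, yours just unpacks it.
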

\begin{proof}
As $\mathcal{R}\re$ is a root system of type $\Cc$, it follows from \cite[Lemma 8, (iii)]{HY08}, that $\Cc$ satisfies (C3), which implies the statement.
\end{proof}

\begin{prop}
Let $b,b' \in A$, $i \in I$, and let $S := \bigcap_{i \neq j \in I} (\psi_b(\alpha_j)^+ \cap \psi_{b'}(\alpha_j)^+)$. Then the following are equivalent:
\begin{enumerate}[label=\roman*)]
	\item $K^b$ and $K^{b'}$ are $i$-adjacent,
	\item $\rho_i(b) = b'$,
	\item $\forall K \in \KKK: (K \subset S \Longleftrightarrow K \in \{K^b, K^{b'}\})$,
	\item $S(K^b, K^{b'}) = \{\psi_b(\alpha_i)^\perp\}$.
\end{enumerate}
\label{char:adjacency}
\end{prop}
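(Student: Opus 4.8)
The plan is to prove the four statements equivalent using (ii) as a hub: I would first derive (i), (iii), (iv) from (ii) by a direct computation with the simple reflection, and then bring each of (i), (iii), (iv) back to (ii). The workhorse throughout is Lemma \ref{WG:simpcones}: since every $H\in\AAA$ misses every chamber $K^c$, each $\gamma\in R$ has a constant, nonzero sign on $K^c$, and a common point of $K^c$ and $K^{c'}$ would force $K^c\subseteq\bigcap_{\beta\in B^{c'}}\beta^+=K^{c'}$ and symmetrically, so two chambers sharing a point coincide. Combined with the bijection $b\mapsto K^b$ of Lemma \ref{bKbbij}, every converse thus reduces to identifying $K^{b'}$ with $K^{\rho_i(b)}$. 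I abbreviate $H:=\psi_b(\alpha_i)^\perp$, and use that $R=\psi_c((R\re)^c)\subseteq\pm\sum_{\alpha\in B^c}\NN_0\alpha$ for every $c\in A$ by (R1); recall also $\rho_i(b)\neq b$.

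For $\mathrm{(ii)}\Rightarrow\cdot$, assume $\rho_i(b)=b'$. Since $\sigma_i^b\in\Hom(b,b')$ and $\Cc$ is simply connected, Corollary \ref{mapconc2} gives $\psi_b=\psi_{b'}\sigma_i^b$, hence $\psi_{b'}=\psi_b\sigma_i^b$, so that $\psi_{b'}(\alpha_i)=-\psi_b(\alpha_i)$ and $\psi_{b'}(\alpha_j)=\psi_b(\alpha_j)-c^b_{ij}\psi_b(\alpha_i)$ with $-c^b_{ij}\in\NN_0$ for $j\neq i$. On $\ol{K^b}\cap H$ all $\psi_{b'}(\alpha_k)$ are $\geq 0$, so this facet lies in $\ol{K^{b'}}$; as $\psi_b(\alpha_i)\geq 0$ on $\ol{K^b}$ and $\psi_{b'}(\alpha_i)=-\psi_b(\alpha_i)\geq0$ on $\ol{K^{b'}}$, we get $\ol{K^b}\cap\ol{K^{b'}}=\ol{K^b}\cap H$, which spans $H$ by Lemma \ref{lemspan}; this is (i). The same formulas give $K^b,K^{b'}\subseteq S$, and any $K^c\subseteq S$ satisfies $K^c\subseteq K^b$ or $K^c\subseteq K^{b'}$ according to the (constant, nonzero) sign of $\psi_b(\alpha_i)$ on $K^c$, hence equals one of them; this is (iii). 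Finally $H$ separates $K^b,K^{b'}$, and for $\gamma=\sum_j n_j\psi_b(\alpha_j)\in R$ (all $n_j\geq0$, after possibly replacing $\gamma$ by $-\gamma$) with $\gamma^\perp\neq H$ some $n_j>0$ with $j\neq i$; rewriting $\gamma$ in the basis $B^{b'}$ leaves these coordinates equal to $n_j\geq 0$, so by the dichotomy for $B^{b'}$ the root $\gamma$ is positive on $K^{b'}$ as well and $\gamma^\perp$ does not separate, giving (iv).

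For $\mathrm{(iv)}\Rightarrow\mathrm{(ii)}$: by $\mathrm{(ii)}\Rightarrow\mathrm{(iv)}$ applied at $\rho_i(b)$ we have $S(K^b,K^{\rho_i(b)})=\{H\}=S(K^b,K^{b'})$, so $K^{b'}$ and $K^{\rho_i(b)}$ lie on the same side of every hyperplane of $\AAA$; they therefore share a point and coincide, whence $b'=\rho_i(b)$ by Lemma \ref{bKbbij}. For $\mathrm{(i)}\Rightarrow\mathrm{(ii)}$ I would show $\mathrm{(i)}\Rightarrow\mathrm{(iv)}$: if $\gamma^\perp$ separates $K^b,K^{b'}$ then $\gamma$ is $\geq0$ on $\ol{K^b}$ and $\leq0$ on $\ol{K^{b'}}$, so it vanishes on the relative interior of $\ol{K^b}\cap\ol{K^{b'}}$; by (i) this relative interior spans $H$, forcing $\gamma^\perp=H$. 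As $K^b\neq K^{b'}$ are separated by at least one hyperplane (otherwise they would share a point and be equal), $S(K^b,K^{b'})=\{H\}$, and $\mathrm{(iv)}\Rightarrow\mathrm{(ii)}$ then finishes.

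The main obstacle is $\mathrm{(iii)}\Rightarrow\mathrm{(ii)}$, where the reflection formulas are not available. If $b=b'$ then $K^{\rho_i(b)}\subseteq S$ already exhibits a second chamber in $S$ and (iii) fails, so I may assume $b\neq b'$. From $K^{b'}\subseteq S$ and Lemma \ref{WG:simpcones} one first gets that $K^b,K^{b'}$ lie on opposite sides of $H$ (otherwise $K^{b'}\subseteq K^b$, so $K^{b'}=K^b$) and that $\psi_{b'}(\alpha_j)>0$ on $K^b$ for $j\neq i$. The crux is to show $K^{\rho_i(b)}\subseteq S$: the inclusions into $\psi_b(\alpha_j)^+$ follow from the reflection formulas applied at $\rho_i(b)$, while for $\psi_{b'}(\alpha_j)^+$ I note that $\psi_{b'}(\alpha_j)^\perp\neq H$ (else $\psi_{b'}(\alpha_j)=\mu\,\psi_b(\alpha_i)$, and the opposite signs of $\psi_b(\alpha_i)$ on $K^b$ and $K^{b'}$ versus the positivity of $\psi_{b'}(\alpha_j)$ on both would force $\mu>0$ and $\mu<0$). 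Hence by $\mathrm{(ii)}\Rightarrow\mathrm{(iv)}$ the hyperplane $\psi_{b'}(\alpha_j)^\perp$ does not lie in $S(K^b,K^{\rho_i(b)})=\{H\}$ and so does not separate $K^b$ and $K^{\rho_i(b)}$; thus $\psi_{b'}(\alpha_j)>0$ on $K^{\rho_i(b)}$ as on $K^b$. Therefore $K^{\rho_i(b)}\subseteq S$, and the uniqueness clause of (iii) together with $K^{\rho_i(b)}\neq K^b$ yields $K^{\rho_i(b)}=K^{b'}$, i.e.\ (ii).
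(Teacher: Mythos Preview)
Your proof is correct and is organized somewhat differently from the paper's. The paper proves the cycle $\mathrm{ii)}\Rightarrow\mathrm{iii)},\mathrm{iv)}$; $\mathrm{iv)}\Rightarrow\mathrm{iii)},\mathrm{i)}$; $\mathrm{i)}\Rightarrow\mathrm{iv)}$; $\mathrm{iii)}\Rightarrow\mathrm{ii)}$, whereas you use $\mathrm{(ii)}$ as a hub. The implications out of $\mathrm{(ii)}$ and the step $\mathrm{(i)}\Rightarrow\mathrm{(iv)}$ are essentially the same in both. Your $\mathrm{(iv)}\Rightarrow\mathrm{(ii)}$ replaces the paper's direct $\mathrm{iv)}\Rightarrow\mathrm{i)},\mathrm{iii)}$ by the short observation that $S(K^b,K^{b'})=S(K^b,K^{\rho_i(b)})$ forces $K^{b'}$ and $K^{\rho_i(b)}$ to lie on the same side of every hyperplane and hence to coincide; this is a neat shortcut. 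The main substantive difference is in $\mathrm{(iii)}\Rightarrow\mathrm{(ii)}$: the paper argues topologically, taking open balls in $K^b$ and $K^{b'}$, using convexity of $S$ to produce an open set in $S\cap H$, and then intersecting with the analogous set $S'$ built from $(b,\rho_i(b))$ to force $K^{b'}=K^{\rho_i(b)}$. You instead show $K^{\rho_i(b)}\subseteq S$ directly by combining the reflection formulas with the already established fact $S(K^b,K^{\rho_i(b)})=\{H\}$, after ruling out $\psi_{b'}(\alpha_j)^\perp=H$ via a sign contradiction. This avoids the open-set bookkeeping entirely and is arguably cleaner; the paper's route, on the other hand, makes the geometry of $S$ more visible. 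Either way the logical content is the same.
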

\begin{proof}
ii) $\implies$ iii),iv): Assume $\rho_i(b) = b'$, then $B^{b'} = \{\psi_b(\alpha_j - c_{ij}^b \alpha_i) \mid j \in I\}$ and by definition we have $\psi_{b'}(\alpha_j) = \psi_b(\alpha_j-c_{ij}^b\alpha_i)$.
Therefore $K^b \subset \psi_b(\alpha_j)^+$ for all $j \in I$, and hence also $K^b \subset \psi_{b'}(\alpha_j)^+$ for all $i \neq j \in I$. The analogue statement holds for $K^{b'}$, so $K^b, K^{b'} \subset S$. Now assume $K \in \KKK$, $K \subset S$, and consider the case $K \subset \psi_b(\alpha_i)^+$. Then we obtain $K \subset \bigcap_{i \in I}\psi_b(\alpha_i)^+$ and thus $K \subset K^b$. By Lemma \ref{WG:simpcones} this already implies $K = K^b$. The case $K \subset \psi_b(\alpha_i)^-$ yields in the same way $K = K^{b'}$. Thus iii) holds.
This also implies $\psi_b(\alpha_i)^\perp \in S(K^b, K^{b'})$. Hence we get
\begin{align*}S &=
(S \cap \psi_b(\alpha_i)^+)\ \dot\cup\ (S \cap \psi_b(\alpha_i)^-) \ \dot\cup\ (S \cap \psi_b(\alpha_i)^\perp)\\
&= K^b\ \dot\cup\ K^{b'}\ \dot\cup (S \cap \psi_b(\alpha_i)^\perp,
\end{align*}
so assume $H \in S(K^b, K^{b'})$. Then $H$ must meet $S$, but cannot meet $K^b$ or $K^{b'}$ by Lemma \ref{WG:simpcones}. The intersection $H \cap S$ is open in $\psi_b(\alpha_i)^\perp$. Hence the two hyperplanes must coincide. This shows iv).

iv) $\implies$ iii), i): Let $S(K^b, K^{b'}) = \{\psi_b(\alpha_i)^\perp\}$. By definition $\psi_b(\alpha_i)^\perp$ is a wall of $K^b$. Assume $\psi_b(\alpha_i)^\perp$ is not a wall of $K^{b'}$, then $K^b \subset \bigcap_{i \in I} \psi_{b'}(\alpha_i)^+$ and $K^b = K^{b'}$ by Lemma \ref{WG:simpcones}, but then $S(K^b, K^{b'}) = \emptyset$, a contradiction, hence $\psi_b(\alpha_i)^\perp \in W^{b'}$ holds. For $j \neq i$ we find $D_{\psi_b(\alpha_j)^\perp}(K^b) = D_{\psi_b(\alpha_j)^\perp}(K^{b'})$, and the same holds for $\psi_{b'}(\alpha_j)^\perp$. Therefore $S$ contains both $K^b$ and $K^{b'}$. Assume $K \subset S$, then $K$ is on either side of $\psi_b(\alpha_i)^\perp$. In case $D_{\psi_b(\alpha_i)^\perp}(K) = D_{\psi_b(\alpha_i)^\perp}(K^b)$, $K = K^b$ holds, so assume $D_{\psi_b(\alpha_i)^\perp}(K) = D_{\psi_b(\alpha_i)^\perp}(K^{b'})$. As $\psi_{b}(\alpha_i)^\perp$ is a wall of $K^{b'}$ different from $\psi_{b'}(\alpha_j)^\perp$ for $j \neq i$, we already have $\psi_{b}(\alpha_i)^\perp = \psi_{b'}(\alpha_i)^\perp$. We obtain $K \subset \bigcap_{i \in I} \psi_{b'}(\alpha_i)^+$, and therefore $K=K^{b'}$, which shows iii).
Furthermore we see that $S \cap \psi_b(\alpha_i)^\perp$ is not empty, as $S$ is a convex set containing points in $\psi_b(\alpha_i)^+$ and in $\psi_b(\alpha_i)^-$. In particular we showed
\begin{align*}S &=
(S \cap \psi_b(\alpha_i)^+)\ \dot\cup\ (S \cap \psi_b(\alpha_i)^-) \ \dot\cup\ (S \cap \psi_b(\alpha_i)^\perp)\\
&= K^b\ \dot\cup\ K^{b'}\ \dot\cup (S \cap \psi_b(\alpha_i)^\perp,
\end{align*}
 and $(S \cap \psi_b(\alpha_i)^\perp \subset \ol{K^b} \cap \ol{K^{b'}}$. Consider open balls $U \subset K^b$, $U' \subset K^{b'}$. The convex hull of $U$ and $U'$ is again open, and therefore intersects $\psi_b(\alpha_i)^\perp$ in subset $U''$, which is open in $\psi_b(\alpha_i)^\perp$. Hence $U''$ spans $\psi_b(\alpha_i)^\perp$. Now $U''$ is contained in $\ol{K^b}$ as well as $\ol{K^{b'}}$, so $K^b$ and $K^{b'}$ are $i$-adjacent, which shows i).

i) $\implies$ iv): Let $K^b$ and $K^{b'}$ be $i$-adjacent, so $\langle \ol{K^b} \cap \ol{K^{b'}} \rangle = \psi_b(\alpha_i)^\perp$. Let $H = \psi_b(\alpha_i)^\perp$. Assume $H' \in \AAA$ separates $K^b$ and $K^{b'}$. Then $(\ol{K^b} \cap H) \cap (\ol{K^{b'}} \cap H)$ will be contained in $H'$. If this intersection spans a hyperplane, then $H = H'$. Therefore iv) holds.

iii) $\implies$ ii): We have the equality $S \cap \psi_b(\alpha_i)^+ = K^b$ by definition. The intersection $S \cap \psi_b(\alpha_i)^-$ must contain $K^{b'}$. Furthermore we find that $\psi_b(\alpha_i)^\perp$ and $\psi_{b'}(\alpha_i)^\perp$ separate $K^b$ and $K^{b'}$, because otherwise $K^b \subset K^{b'}$. Since $S$ is convex, we actually find an open subset $U' \subset S$ which is in $\psi_b(\alpha_i)^\perp \cap \ol{K^b}$. Since $S$ is open, this is contained in an open subset $U \subset S$, such that $U \cap \psi_b(\alpha_i)^\perp = U'$.
Assume $\rho_i(b) = b''$, then ii) $\implies$ iv) $\implies$ i) yields that
$$S' = \bigcap_{i \neq j \in I} (\psi_b(\alpha_j)^+ \cap \psi_{b''}(\alpha_j)^+)$$ contains exactly the chambers $K^b$ and $K^{b''}$ and that $K^b$, $K^{b''}$ are $i$-adjacent. Furthermore we obtain
$$S' = K^b\ \dot\cup\ K^{b''}\ \dot\cup\ (S' \cap \psi_b(\alpha)^\perp).$$

By construction $S'$ contains $U'$, therefore it also contains an open set $U''$ such that $U'' \cap \psi_b(\alpha_i)^\perp = U'$. But then $U \cap U''$ is open, contained in $S$, and meets $K^{b''}$. So $K^{b'} = K^{b''}$ and hence $b' = b''$ by Lemma \ref{bKbbij}. This shows ii) and finishes the proof.
\end{proof}

\begin{lem}
For every $H \in \AAA$, there exists $b \in A$ such that $H \in W^b$.\label{Hiswall}
\end{lem}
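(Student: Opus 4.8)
The plan is to unwind the definitions so that the statement reduces to the elementary fact that every real root at $a$ is the image of a \emph{simple} root under a morphism of $\Wg(\Cc)$ ending at $a$. By definition of $\AAA$, any $H \in \AAA$ has the form $H = \gamma^\perp$ with $\gamma \in R = \psi(\rer{a})$; write $\gamma = \psi(\delta)$ for some real root $\delta \in \rer{a}$. Recalling that the walls of $K^b$ are precisely the hyperplanes $\alpha^\perp$ with $\alpha \in B^b = \psi_b(B)$ (this is what Lemma \ref{char:wall} establishes), it therefore suffices to produce an object $b \in A$ together with an index $j$ such that $\gamma = \psi_b(\alpha_j) \in B^b$.

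First I would invoke the very definition of $\rer{a}$: the root $\delta$ can be written as $\delta = \id^a\sigma_{i_1}\cdots\sigma_{i_k}(\alpha_j)$ for suitable indices, so that $v := \id^a\sigma_{i_1}\cdots\sigma_{i_k}$ is a morphism with \emph{target} $a$, say $v \in \Hom(c,a)$ with $c := \rho_{i_k}\cdots\rho_{i_1}(a)$, and $\alpha_j \in B$ is the simple root at $c$. In other words $\delta = v(\alpha_j)$ with $v$ a morphism into $a$ and $\alpha_j$ a standard basis vector. Setting $b := c$, connectedness together with simple connectedness of $\Cc$ guarantees that $\Hom(a,b)$ is a singleton $\{w\}$; since $\Hom(b,a)$ is likewise a singleton containing both $w^{-1}$ and $v$, we must have $w^{-1} = v$.

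What remains is a one-line computation. By Definition \ref{def:CStoSA} we have $\psi_b = \psi w^{-1} = \psi v$, and hence $\psi_b(\alpha_j) = \psi(v(\alpha_j)) = \psi(\delta) = \gamma$. Since $\alpha_j \in B$, this shows $\gamma \in \psi_b(B) = B^b$, whence $H = \gamma^\perp = \psi_b(\alpha_j)^\perp \in W^b$, which is exactly the desired conclusion.

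The only care needed is the bookkeeping of the directions of the morphisms involved: one must check that the composite appearing in the definition of $\rer{a}$ genuinely has target $a$ (so that its inverse is the unique morphism $a \to b$ furnished by simple connectedness) and that $\alpha_j$ is a simple root at its \emph{source} object $b$. I expect this direction-chasing, rather than any substantive geometric argument, to be the main — and essentially the only — obstacle; everything else follows immediately from the construction of $\psi_b$ and $B^b$ and from the fact that all $\Hom$-sets are singletons.
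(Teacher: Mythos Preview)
Your argument is correct and is essentially the same as the paper's: both observe that any $\gamma \in R$ arises as $\psi_b(\alpha_j)$ for some $b \in A$ and $j \in I$, whence $\gamma^\perp \in W^b$. The paper compresses this into a single sentence, simply asserting the existence of such $b$ and $i$, whereas you carefully unwind the definition of $\rer a$ and the direction of the morphisms to justify it; your extra bookkeeping is accurate and in fact fills in what the paper leaves implicit. (Minor note: the characterization $W^b = \{\alpha^\perp : \alpha \in B^b\}$ is already the definition of $W^b$ in Definition~\ref{def:CStoSA}, so you do not really need Lemma~\ref{char:wall} here.)
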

\begin{proof}
As $H \in \AAA$, we find $\alpha \in R$ such that $H = \alpha^\perp$, thus there exists $b \in A$ and $i \in I$ such that $\alpha = \phi_b(\alpha_i)$, and consequently $H \in W^b$.
\end{proof}

\begin{lem}
We have $|S(K^b, K^{b'})| = 0$ if and only if $b=b'$, and $|S(K^b, K^{b'})| = 1$ if and only if $K^b$ and $K^{b'}$ are adjacent.
\label{char:dist:anch}
\end{lem}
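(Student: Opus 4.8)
The plan is to reduce both statements to a single geometric fact, which I would isolate first and call the \emph{no proper containment} principle: if $K^{b'} \subseteq K^b$, then $K^b = K^{b'}$ (and hence $b = b'$ by Lemma \ref{bKbbij}). To prove it, note that each wall of $K^{b'}$ has the form $\psi_{b'}(\alpha_j)^\perp$ with $\psi_{b'}(\alpha_j) \in R$, so it lies in $\AAA$ and, by Lemma \ref{WG:simpcones}, does not meet $K^b$. Thus $\psi_{b'}(\alpha_j)$ has no zero on the connected set $K^b$, and since it is positive somewhere on $K^{b'} \subseteq K^b$ it is positive on all of $K^b$. Intersecting over $j \in I$ gives $K^b \subseteq \bigcap_{j} \psi_{b'}(\alpha_j)^+ = K^{b'}$, whence $K^b = K^{b'}$.

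With this in hand, the first equivalence is quick. If $b = b'$, then $K^b = K^{b'}$ and no hyperplane can separate a cone from itself, so $S(K^b, K^{b'}) = \emptyset$. Conversely, if $S(K^b, K^{b'}) = \emptyset$, then for each wall $\psi_b(\alpha_i)^\perp$ of $K^b$ (which lies in $\AAA$) the two chambers lie on the same side, i.e.\ $D_{\psi_b(\alpha_i)^\perp}(K^{b'}) = \psi_b(\alpha_i)^+$. Intersecting over $i \in I$ yields $K^{b'} \subseteq \bigcap_{i} \psi_b(\alpha_i)^+ = K^b$, and the no proper containment principle forces $b = b'$.

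For the second equivalence, one direction is immediate from Proposition \ref{char:adjacency}: if $K^b$ and $K^{b'}$ are adjacent, they are $i$-adjacent for some $i$, and implication (i)$\Rightarrow$(iv) gives $S(K^b, K^{b'}) = \{\psi_b(\alpha_i)^\perp\}$, which has cardinality one. For the converse I would start from $S(K^b, K^{b'}) = \{H\}$ and show that $H$ must be a wall of $K^b$. Indeed, if $H$ were not a wall, then every wall $\psi_b(\alpha_i)^\perp$ of $K^b$ would differ from $H$, hence be non-separating, and exactly as in the first equivalence I would obtain $K^{b'} \subseteq K^b$ and then $b = b'$; but then $S(K^b, K^{b'}) = \emptyset$, contradicting $|S(K^b, K^{b'})| = 1$. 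Hence $H = \psi_b(\alpha_i)^\perp$ for some $i$, so $S(K^b, K^{b'}) = \{\psi_b(\alpha_i)^\perp\}$, and Proposition \ref{char:adjacency} (iv)$\Rightarrow$(i) shows that $K^b$ and $K^{b'}$ are $i$-adjacent, hence adjacent.

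The only step requiring real care is the no proper containment principle, and its entire content is the input from Lemma \ref{WG:simpcones} that no hyperplane of $\AAA$ meets a chamber; once that is available, the rest is routine bookkeeping with the two halfspaces cut out by each hyperplane together with the bijection of Lemma \ref{bKbbij}.
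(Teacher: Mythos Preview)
Your proof is correct and follows essentially the same route as the paper: both reduce the two equivalences to the observation that $K^{b'} \subseteq K^b$ forces equality (via Lemma \ref{WG:simpcones}), and both invoke Proposition \ref{char:adjacency} (i)$\Leftrightarrow$(iv) for the second statement. The only difference is cosmetic---you isolate and name the ``no proper containment'' step, whereas the paper leaves it as a one-line remark inside the first equivalence.
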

\begin{proof}
Assume $|S(K^b, K^{b'})| = 0$ holds, and let $B^b := \{\beta_1, \dots, \beta_r\}$. Then $K^{b'} \subseteq \bigcap_{i=1}^r\beta_i^+$. Assuming that the inclusion is proper provides a contradiction to the fact that no hyperplane in $W^{b'}$ meets $K^b$ in Lemma \ref{WG:simpcones}.

In the case $b=b'$, the statement in Lemma \ref{bKbbij} yields $|S(K^b, K^{b'})| = 0$, which shows the first equivalence.

For the second statement assume $S(K^b, K^{b'}) = \{H\}$, and assume $H \notin W^b$. Then $K^{b'} \subset D_{H'}(K^b)$ for all $H' \in W^b$, and therefore $K^{b'} \subset K^b$ holds. As before we obtain $b=b'$ and $|S(K^b, K^{b'})| = 0$ in contradiction to our assumptions. 
Therefore $H$ is a wall of $K^b$ as well as $K^{b'}$, hence $H = \psi_b(\alpha_i)^\perp$ for some $i \in I$. The statement follows from Proposition \ref{char:adjacency}, iv) $\implies$ i). 

If $K^b$, $K^{b'}$ are $i$-adjacent, Proposition \ref{char:adjacency}, i) $\implies$ iv), yields
$|S(K^b, K^{b'})| = 1$.
\end{proof}

\begin{remdef}
We define the distance function on $\KKK$ in the following way, which is the same definition as for chamber complexes. Take a minimal gallery $K^{b_0}, \dots, K^{b_m}$ between chambers $K^{b_0}$ and $K^{b_m}$, where $K^{b_i}$ and $K^{b_{i+1}}$ are adjacent (this exists since $\Cc$ is connected), and set
$$d(K^{b_0}, K^{b_m}) := m.$$
This is a well defined metric on the set $\KKK$, thus $(\KKK,d)$ is a metric space. The fact that $d$ is a metric can be checked in the same way as if $\KKK$ was constructed from a simplicial arrangement.
\end{remdef}

\begin{prop}
Assume $K^b \in \KKK$ and $x \in \ol{K}$ such that $\AAA_x = \{H \in \AAA \mid x \in H\}$ is finite. Let $R_x = \{\alpha \in R \mid \alpha^\perp \in \AAA_x\}$, $W = \langle R_x \rangle$, $V_x = V/W^\perp$. Construct $I_x = \{i \in I \mid \psi_b(\alpha_i) \in R_x\}$, and set $\Pi_x = \langle \rho_i \mid i \in I_x\rangle$. Further define $A_x := \Pi_x(b)$ and $C^{b'}_x = (c^{b'}_{i,j})_{i,j \in I_x}$. Set
$$
\mathcal{C}_x := \mathcal{C}(I_x, A_x, (\rho_i)_{i \in I_x}, (C^{b'}_x)_{b' \in A_x}),
$$
then $\mathcal{C}_x$ is a
connected simply connected
Cartan graph which admits a finite root system
with real roots at $c$ being the set $\psi_c^{-1}(R_x)$ for $c \in A_x$. Here $R_x$ is identified with a subset of $(V_x)^\ast$ via $\alpha(v + W^\perp) := \alpha(v)$.
\label{char:res}
\end{prop}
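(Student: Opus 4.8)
The statement to prove is Proposition \ref{char:res}: starting from a crystallographic Tits arrangement realizing a Cartan graph $\Cc$, and a point $x \in \ol{K^b}$ whose section $\AAA_x$ is finite, the data $(I_x, A_x, (\rho_i)_{i \in I_x}, (C^{b'}_x)_{b' \in A_x})$ forms a connected simply connected Cartan graph admitting a finite root system whose real roots at $c$ are $\psi_c^{-1}(R_x)$. My plan is to identify $\Cc_x$ with a residue of $\Cc$ and then apply Lemma \ref{residues}, so that almost all of the structural claims follow formally, leaving only the finiteness of $A_x$ (equivalently of $\Pi_x(b)$) as the genuine geometric input.

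\textbf{Step 1: recognize $\Cc_x$ as an $I_x$-residue.} First I would observe that by Definition \ref{def:CStoSA} and the construction preceding Proposition \ref{char:res}, $\Pi_x = \langle \rho_i \mid i \in I_x \rangle$ is exactly the group $\Pi_{I_x}$ from the definition of residues, $A_x = \Pi_x(b)$ is the orbit, and the restricted Cartan matrices $C^{b'}_x = (c^{b'}_{ij})_{i,j \in I_x}$ are precisely $\Cm_{I_x}^{b'}$. Thus $\Cc_x = \Cc_{I_x}^{b}$ is literally the $I_x$-residue of $\Cc$ containing $b$. By Lemma \ref{residues}, $\Cc_x$ is then connected, and simply connected because $\Cc$ is; moreover $\Cc_x$ admits a root system since $\Cc$ does.

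\textbf{Step 2: finiteness of the residue from the finite section.} The main obstacle, and the only place where the hypothesis on $x$ is actually used, is showing that $A_x = \Pi_x(b)$ is finite and that the resulting real root system is finite. Here the geometry enters: I would argue that the chambers of $\KKK$ lying in the star of the simplex $F = x$-face (those chambers $K^{b'}$ with $b' \in A_x$) correspond bijectively to the chambers of the localized, essentially spherical arrangement determined by the finite set $\AAA_x$. Concretely, passing to $V_x = V/W^\perp$ with $W = \langle R_x \rangle$, the hyperplanes $\{\alpha^\perp \mid \alpha \in R_x\}$ descend to a finite arrangement in $V_x$ whose chambers are in bijection with $\Pi_x(b)$ via $\rho_i$-adjacency (using Proposition \ref{char:adjacency} to match $i$-adjacency of the $K^{b'}$ with the action of $\rho_i$, and Lemma \ref{char:dist:anch} to control distances). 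A finite hyperplane arrangement has finitely many chambers, so $\Pi_x(b)$ is finite. Once $\Pi_x(b)$ is finite, the last clause of Lemma \ref{residues} gives that $\Cc_x$ admits a \emph{finite} real root system.

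\textbf{Step 3: identify the real roots with $\psi_c^{-1}(R_x)$.} Finally I would pin down the roots. Since the finite root system equals $\rsC\re$ by \cite[Prop.\,2.12]{p-CH09a}, it suffices to identify the real roots at $c \in A_x$. For each such $c$ we have the coordinate map $\psi_c$, and by Corollary \ref{mapconc2} the maps $\psi_c$ intertwine the $\Pi_x$-action with the groupoid morphisms. I would check that $\psi_c^{-1}(R_x)$ is exactly the orbit of the simple roots $\{\alpha_i \mid i \in I_x\}$ under $\langle \s_i \mid i \in I_x\rangle$: every element of $R_x$ is a root of the original arrangement lying in $W$, hence $\psi_c^{-1}(R_x) \subseteq (R\re)^c_{I_x}$, and conversely the reflection formula \eqref{fwg_eq:sia} together with Corollary \ref{mapconc2} shows every $I_x$-real root maps into $R_x$. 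This matches the real roots of $\Cc_x$ computed in the proof of Lemma \ref{residues}, and the identification $R_x \subset (V_x)^\ast$ via $\alpha(v + W^\perp) := \alpha(v)$ is well defined precisely because $R_x \subset W = (W^\perp)^\perp$. This completes the proof.
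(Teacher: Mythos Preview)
Your Step~1 matches the paper exactly: $\Cc_x$ is the $I_x$-residue $\Cc_{I_x}^b$, and Lemma~\ref{residues} gives connectedness, simple connectedness, and existence of a root system.

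The genuine gap is in Step~3, specifically the inclusion $\psi_c^{-1}(R_x) \subseteq (R\re_x)^c$. Your justification ``every element of $R_x$ is a root of the original arrangement lying in $W$, hence $\psi_c^{-1}(R_x) \subseteq (R\re)^c_{I_x}$'' is not an argument. A root $\alpha \in R_x$ is certainly a real root of the ambient $\Cc$, but a priori it could arise only from a word $\sigma_{i_1}\cdots\sigma_{i_k}$ that leaves the residue, i.e.\ uses indices outside $I_x$; nothing you have said rules this out. This is the crux of the proposition, and the paper handles it geometrically: one first shows the easy inclusion $(R\re_x)^c \subseteq \psi_c^{-1}(R_x)$, so the real roots of $\Cc_x$ already form a finite (hence spherical) simplicial arrangement in $V_x$ whose chambers are the $\pi_x(K^c)$ for $c \in A_x$. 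Now if some $\alpha \in R_x$ were \emph{not} a real root of $\Cc_x$, the hyperplane $\alpha^\perp$ would have to cut through one of these chambers, contradicting Lemma~\ref{WG:simpcones}. That is the missing idea.

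Your Step~2 takes a detour the paper avoids. You try to prove $\Pi_x(b)$ finite via a bijection with chambers of the localized arrangement, and then invoke the last clause of Lemma~\ref{residues}. But the surjectivity of that bijection is itself the hard part (equivalent to the inclusion above), and you assert it without proof. The paper's route is shorter: finiteness of the real root system follows immediately from the easy inclusion $(R\re_x)^c \subseteq \psi_c^{-1}(R_x)$ together with $|R_x| < \infty$ (since $\AAA_x$ is finite by hypothesis). No finiteness of $\Pi_x(b)$ is needed in advance.
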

\begin{proof}
First notice that $\mathcal{C}_x$ is indeed a connected and simply connected Cartan graph by Lemma \ref{residues} as it is an $I_x$-residue of $\Cc$.

Denote by $(R\re_x)^c$ for $c \in A_x$ the real roots at $c$ given by the Cartan graph $\Cc_x$. By taking the standard basis $\{\alpha_1, \dots, \alpha_r\}$, we can consider $\ZZ^{I_x}$ as the $\ZZ$-span of $\{\alpha_i \mid i \in I_x\}$ in $\ZZ^I$.
Comparing the construction of
$$(R\re_x)^c = \{\id^c\sigma_{i_1}\cdots \sigma_{i_k}(\alpha_j) \mid k \in \NN_0, i_1, \dots, i_k, j \in I_x\}$$
 and
$$(R\re)^c = \{\id^c\sigma_{i_1}\cdots \sigma_{i_k}(\alpha_j) \mid k \in \NN_0, i_1, \dots, i_k, j \in I\},$$ it follows immediately that $(R\re_x)^c \subseteq \psi_c^{-1}(R_x)$ for $c \in A_x$. In particular $(R\re_x)^c$ is
a finite root system,
and for $c \in A_x$ the real roots $(R\re_x)^c$ at $c$ form a spherical simplicial arrangement of rank $|I_x|$ via the map $\psi_c$ in the space $V_x$. Call this arrangement $\AAA'$, the corresponding root system $R'$ and the chambers $\KKK'$. Also denote the canonical projections with $\pi_x: V \to V/W^\perp$ and $\pi_x^\perp: V^\ast \to (V_x)^\ast$, $\pi_x^\perp(\alpha)(y+W^\perp) =  \alpha(y)$.
Let $c \in A$, $d \in A_x$, and let $(K')^c$ be the chamber associated to $c$ in $\Cc_x$, and $(B')^c$ the respective root basis in $(V_x)^\ast$.

We show that for all $c \in A_x$ we have $\pi_x(K^c) = (K')^c$ and $\pi_x^\perp(B^c \cap R_x) = (B')^c$. For the simplicial arrangement associated to $\Cc_x$ we need some notation as in Definition \ref{def:CStoSA}. Let $\psi': \ZZ^{I^x} \to (V_x)^\ast$, mapping $\alpha_i$ to $\beta_i$, i.e.\ $\psi' = \psi_b|_{\ZZ^{I_x}}$. The associated root system is then given by $R' = \psi'((R_x\re)^b)$. For $c \in A_x$ and $\Hom(c,b) = \{w\}$ we can then define $\psi'_c = \psi' w$ and get $\psi_c((R_x\re)^c) = R'$.

We find by definition of $I_x$ that $B^c \cap R_x = \psi_c(\{\alpha_i\mid i \in I_x\})$ and $(B')^c = \psi'_c(\{\alpha_i\mid i \in I_x\})$ via the embedding of $\alpha_i$, $i \in I_x$ into $\ZZ^I$. By definition we can write $\psi'_c = \psi' w$, $\psi_c = \psi_b w$ for $\Hom(c,b) = \{w\}$. As noted above $\psi' = \psi_b|_{\ZZ^{I_x}}$, so $\psi'_c = \psi_b|_{\ZZ^{I_x}} w$, which yields the equality $\pi_x^\perp(B^c \cap R_x) = (B')^c$. Given this, we immediately obtain $\pi_x(K^c) = (K')^c$ by considering how $K^c$, $(K')^c$ are defined.

Now assume $\alpha \in R_x$. If it is not in $\psi_b(R_x\re)$, it meets a chamber in $\KKK'$, since the elements in $\KKK'$ are the connected components of $V_x \setminus \bigcup_{H \in \AAA_x}H$, which is impossible by Lemma \ref{WG:simpcones}.
We conclude $R_x = R'$, as required.
\end{proof}

\begin{rem}
It is an easy observation (the proof is similar to the proof of \cite[Lemma 2.7]{CMW}) that the separating hyperplanes for two chambers $K^b, K^{b'}$ with $x \in \ol{K^b} \cap \ol{K^{b'}}$ are all contained in $A_x$. In combination with the next lemma, this yields that the Cartan graph $\Cc_x$ is independent of the choice of $K^b$. In other words, in this case $b' \in \Pi^x(b)$.
\label{WG:resioc}
\end{rem}

The next lemma yields a characterization of the distance $d$, which we already established for simplicial arrangements.

\begin{lem}
If $b,b' \in A$, then $d(K^b, K^{b'}) = |S(K^b, K^{b'})|$.
\label{char:dist}
\end{lem}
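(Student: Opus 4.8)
The plan is to prove the two inequalities $d(K^b,K^{b'}) \leq |S(K^b,K^{b'})|$ and $|S(K^b,K^{b'})| \leq d(K^b,K^{b'})$ separately, exactly as one does for the gallery distance in an abstract simplicial chamber complex. The key auxiliary fact, already available in Lemma \ref{char:dist:anch}, is that two chambers are adjacent precisely when they are separated by a single hyperplane; this anchors the induction on $d$.

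\begin{proof}
We prove both inequalities. First I would show $|S(K^b,K^{b'})| \leq d(K^b,K^{b'})$ by induction on $m := d(K^b,K^{b'})$. If $m=0$ then $b=b'$ by definition of the metric, and $|S(K^b,K^{b'})|=0$ by Lemma \ref{char:dist:anch}. If $m \geq 1$, take a minimal gallery $K^b = K^{b_0}, \dots, K^{b_m} = K^{b'}$ and let $K^{b_{m-1}}$ be the penultimate chamber, so that $d(K^b, K^{b_{m-1}}) = m-1$ and $K^{b_{m-1}}, K^{b'}$ are adjacent. By Lemma \ref{char:dist:anch} the set $S(K^{b_{m-1}}, K^{b'})$ consists of a single hyperplane $H$. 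Any hyperplane separating $K^b$ from $K^{b'}$ either separates $K^b$ from $K^{b_{m-1}}$ or is $H$ itself, so
$$S(K^b, K^{b'}) \subseteq S(K^b, K^{b_{m-1}}) \cup \{H\},$$
whence $|S(K^b, K^{b'})| \leq (m-1) + 1 = m$ by the induction hypothesis.

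For the reverse inequality $d(K^b, K^{b'}) \leq |S(K^b, K^{b'})|$ I would again induct, this time on $n := |S(K^b, K^{b'})|$. The case $n=0$ gives $b=b'$ by Lemma \ref{char:dist:anch}, so $d = 0$. For $n \geq 1$, the heart of the argument is to produce a chamber adjacent to $K^b$ whose separating set from $K^{b'}$ has size exactly $n-1$: concretely, I would pick a hyperplane $H \in S(K^b, K^{b'})$ that is a wall of $K^b$, pass to the chamber $K^{b''}$ which is $i$-adjacent to $K^b$ across $H = \psi_b(\alpha_i)^\perp$ (using Lemma \ref{Hiswall} and Proposition \ref{char:adjacency} to locate it), and argue that crossing $H$ strictly decreases the separating set, i.e.\ $S(K^{b''}, K^{b'}) = S(K^b, K^{b'}) \setminus \{H\}$. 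The induction hypothesis then gives $d(K^{b''}, K^{b'}) \leq n-1$, and $d(K^b, K^{b'}) \leq 1 + d(K^{b''}, K^{b'}) \leq n$ by the triangle inequality.

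The main obstacle will be the existence of a wall $H$ of $K^b$ lying in $S(K^b, K^{b'})$ together with the fact that crossing it decreases the separating set by exactly one. Some separating hyperplane must be a wall of $K^b$: if none were, then $K^{b'}$ would lie on the $K^b$-side of every wall of $K^b$, forcing $K^{b'} \subseteq K^b$ and hence $b = b'$ by Lemma \ref{WG:simpcones} and Lemma \ref{bKbbij}, contradicting $n \geq 1$. Once $H$ is chosen as a wall of $K^b$, the adjacent chamber $K^{b''}$ satisfies $S(K^{b''}, K^b) = \{H\}$ by Proposition \ref{char:adjacency}, and $K^{b''}$ lies on the same side as $K^{b'}$ of $H$ while agreeing with $K^b$ on every other hyperplane; this is precisely what forces $S(K^{b''}, K^{b'}) = S(K^b, K^{b'}) \setminus \{H\}$. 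Combining the two inequalities yields $d(K^b, K^{b'}) = |S(K^b, K^{b'})|$.
\end{proof}
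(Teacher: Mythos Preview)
Your proof is correct and follows essentially the same approach as the paper: first bounding $|S(K^b,K^{b'})|$ above by $d$ via a minimal gallery and the triangle inclusion $S(K^b,K^{b'}) \subseteq S(K^b,K^{b_{m-1}}) \cup S(K^{b_{m-1}},K^{b'})$, then constructing a gallery of length $|S(K^b,K^{b'})|$ by repeatedly crossing a wall of $K^b$ that lies in the separating set. The only cosmetic differences are that the paper splits off the \emph{first} chamber of the gallery rather than the penultimate one, and phrases the second half as building a gallery rather than as an induction with the triangle inequality; these are equivalent.
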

\begin{proof}
Let $d := d(K^b, K^{b'})$, and set $m = |S(K^b, K^{b'})|$ in case this is finite. For $d = 0,1$ the statement follows from Lemma \ref{char:dist:anch}. We prove $|S(K^b, K^{b'})| \leq d$ by induction on $d$, so assume $d \geq 2$ and for $c,c' \in A$ with $d(K^c, K^{c'}) < d$ we know $d(K^c, K^{c'}) = |S(K^c, K^{c'})|$. Let $K^b = K^{b_0}, K^{b_1}, \dots, K^{b_d} = K^{b'}$ be a minimal gallery from $K^b$ to $K^{b'}$. We find $K^{b_1}, \dots, K^{b_d}$ to be a minimal gallery from $K^{b_1}$ to $K^{b'}$, and hence $d(K^{b_1}, K^{b'}) = d-1 = |S(K^{b_1}, K^{b'})|$ by induction, and $d(K^b, K^{b_1}) = 1 = |S(K^b, K^{b_1})|$.

So we can assume $S(K^b, K^{b_1}) = \{ H_1\}$, $S(K^{b_1}, K^{b'}) = \{H_2, \dots, H_d\}$.

Assume $H$ separates $K^b$ and $K^{b'}$, and let $1 \leq j \leq d$ be the first index, such that $D_H(K^{b_{j-1}}) = D_H(K^b)$, $D_H(K^{b_{j}}) = D_H(K^{b'})$. This index exists, otherwise we find $D_H(K^b) = D_H(K^{b'})$. If $j = 1$, we find $H = H_1$, else we find $H \in S(K^{b_1}, K^{b'})$, and we can conclude
$$S(K^{b}, K^{b'}) \subset S(K^{b}, K^{b_1}) \cup S(K^{b_1}, K^{b'})
$$
and thus $m \leq d$.

To show equality we show that there exists a gallery of length $m$ connecting $K^b$ and $K^{b'}$. As we now know that $S(K^{b}, K^{b'})$ is actually finite, let $S(K^{b}, K^{b'}) = \{H'_1, \dots, H'_m\}$. There exists a hyperplane in $S(K^{b}, K^{b'})$ which is a wall of $K^b$, otherwise we find for every wall of $K^b$, that $K^{b'}$ is on the same side, which yields $K^b = K^{b'}$ and $b=b'$, in contradiction to $d(K^b, K^{b'}) \geq 2$.

So assume $H'_1$ is a wall of $K^b$, then we find $H'_1 = \psi_b(\alpha_i)^\perp$ for some $i \in I$. So let $b_1 \in A$ such that $\rho_i(b) = b_1$, then $K^{b_1}$ is $i$-adjacent to $K^b$ by Proposition \ref{char:adjacency}. We obtain $S(K^b, K^{b_1}) = \{H_1'\}$. As $H_1'$ is the only hyperplane separating $K^b$ and $K^{b_1}$ by Lemma \ref{char:dist:anch}, this implies $D_{H_i'}(K^b) = D_{H_i'}(K^{b_1}) = -D_{H_i'}(K^{b'})$ for $i=2, \dots, m$.

Furthermore note that every $H_i'$ for $2 \leq i \leq m$ separates $K^b$ and $K^{b'}$, and therefore separates $K^{b_1}$ and $K^{b'}$ as well. On the other hand if $H$ separates $K^{b_1}$ and $K^{b'}$, it also separates $K^b$ and $K^{b'}$. We obtain $S(K^{b_1}, K^{b'}) = \{H_2', \dots, H_m'\}$, and by induction we find a gallery of length $m$ connecting $K^b$ and $K^{b'}$. Hence $d \leq m$, which concludes the proof.
\end{proof}

\begin{fol}
Assume $b,b' \in A$ and $K^{b} = K^{b_0}, \dots, K^{b_m}= K^{b'}$ is a minimal gallery from $K^b$ to $K^{b'}$. Then this gallery crosses no hyperplane in $\AAA$ more than once.
\end{fol}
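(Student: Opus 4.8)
The plan is to deduce the statement from the distance formula $d(K^b,K^{b'}) = |S(K^b,K^{b'})|$ of Lemma~\ref{char:dist} by a parity-of-crossings argument, in the same spirit as the classical fact that a minimal gallery in a Coxeter complex crosses each wall at most once.

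First I would fix the meaning of ``crossing''. By Lemma~\ref{char:dist:anch}, two consecutive chambers $K^{b_{i-1}}$ and $K^{b_i}$ of the gallery are adjacent, hence separated by a single hyperplane; call it $H_i \in \AAA$, so that $S(K^{b_{i-1}},K^{b_i}) = \{H_i\}$. We say the gallery crosses $H_i$ at step $i$. The gallery therefore performs exactly $m$ crossings, listed with multiplicity as $H_1,\dots,H_m$, and the claim to prove is precisely that these are pairwise distinct.

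The key observation is a parity statement. For a fixed $H \in \AAA$ the halfspaces $D_H(K^{b_{i-1}})$ and $D_H(K^{b_i})$ agree unless $H = H_i$, in which case they are opposite, since $H_i$ is the unique hyperplane separating $K^{b_{i-1}}$ and $K^{b_i}$. Consequently $D_H(K^{b_0})$ and $D_H(K^{b_m})$ differ if and only if $H$ appears an odd number of times among $H_1,\dots,H_m$; that is, $H \in S(K^b,K^{b'})$ if and only if $H$ is crossed an odd (in particular nonzero) number of times. To finish I would run a counting argument. Let $C := \{H_1,\dots,H_m\}$ be the set of distinct hyperplanes crossed, and for $H \in C$ let $n_H \geq 1$ denote the number of steps at which $H$ is crossed, so that $\sum_{H \in C} n_H = m$. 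Every element of $S(K^b,K^{b'})$ is crossed a nonzero number of times, so $S(K^b,K^{b'}) \subseteq C$, whence $|S(K^b,K^{b'})| \leq |C|$. Combining this with Lemma~\ref{char:dist} and the trivial bound $|C| \leq \sum_{H \in C} n_H$ gives
\[
m = d(K^b,K^{b'}) = |S(K^b,K^{b'})| \leq |C| \leq \sum_{H \in C} n_H = m .
\]
Hence all inequalities are equalities; in particular $|C| = \sum_{H \in C} n_H$ with each $n_H \geq 1$ forces $n_H = 1$ for every $H \in C$, so no hyperplane is crossed more than once.

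I do not expect a genuine obstacle. The only point requiring a little care is the bookkeeping behind the parity statement, namely that along the gallery the side of $H$ flips exactly at those steps where $H$ is the separating hyperplane; but this is immediate from Lemma~\ref{char:dist:anch}, since adjacent chambers are separated by a single hyperplane, so that each step flips at most one side.
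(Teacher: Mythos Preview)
Your argument is correct and is exactly the standard counting argument one expects here. The paper itself states this corollary without proof, as an immediate consequence of Lemma~\ref{char:dist}; your write-up simply makes explicit the parity/counting bookkeeping (each of the $m$ steps crosses one hyperplane, every $H\in S(K^b,K^{b'})$ must be crossed, and $|S(K^b,K^{b'})|=m$ forces each crossing to be distinct) that the paper leaves to the reader.
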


Another consequence of Lemma \ref{char:dist} is the following.

\begin{lem}
Let $x \in K^b$, $y \in K^{b'}$. For every point $z \in [x,y]$ there exists a neighbourhood $U_z$ such that $\sec(U_z) = \{H \in \AAA \mid H \cap U_z \neq \emptyset\}$ is finite.
\label{segmfin}
\end{lem}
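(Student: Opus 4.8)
The statement asserts local finiteness of $\AAA$ along the closed segment $[x,y]$ joining a point $x \in K^b$ to a point $y \in K^{b'}$. The natural strategy is to reduce everything to the distance bound $d(K^b,K^{b'}) = |S(K^b,K^{b'})|$ established in Lemma \ref{char:dist}, together with the characterization of walls and adjacency from Proposition \ref{char:adjacency} and Lemma \ref{char:dist:anch}. The plan is to fix a minimal gallery $K^b = K^{b_0}, \dots, K^{b_m} = K^{b'}$ and to show that the finitely many hyperplanes relevant near the segment are precisely the separating hyperplanes $S(K^b,K^{b'})$, which by Lemma \ref{char:dist} number exactly $m = d(K^b,K^{b'}) < \infty$, plus possibly some hyperplanes through the endpoints or through the simplices the segment passes through.

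First I would handle a point $z$ in the relative interior of the segment that lies inside one of the open chambers $K^{b_j}$, or more generally in the open part of $T$ away from lower-dimensional faces. For such $z$, since $z \in K^{b_j}$ and no $H \in \AAA$ meets $K^{b_j}$ by Lemma \ref{WG:simpcones}, I can choose an open ball $U_z \subset K^{b_j}$, and then $\sec(U_z) = \emptyset$ is trivially finite. The real content is at the points $z$ where the segment crosses a wall, i.e.\ passes from one chamber $K^{b_{j-1}}$ to the adjacent chamber $K^{b_j}$: there $z$ lies in $\ol{K^{b_{j-1}}} \cap \ol{K^{b_j}}$, and I would work in a small neighbourhood $U_z$ meeting only these two chambers and the separating wall $H_j \in S(K^{b_{j-1}},K^{b_j})$. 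By Lemma \ref{char:dist:anch} this single hyperplane is the only one separating the two adjacent chambers, and by Lemma \ref{WG:simpcones} no hyperplane of $\AAA$ can meet either open chamber; hence any $H \in \AAA$ meeting $U_z$ must pass through the common face, and I would argue using the localisation $\AAA_z$ that only finitely many such hyperplanes exist near $z$. The key tool here is Proposition \ref{char:res}: the localisation data at a point $z$ with $\AAA_z$ finite gives a finite residue $\Cc_z$, so if I can establish that $\AAA_z$ is finite at every crossing point, the finiteness of $\sec(U_z)$ follows by shrinking $U_z$ so that it meets only hyperplanes through $z$, exactly as in Lemma \ref{compfin}.

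The main obstacle, therefore, is to verify that $\AAA_z$ is finite at each point $z \in [x,y]$. This is where I expect the difficulty to concentrate, because unlike the simplicial-arrangement setting of Section \ref{sec_titsarr}, here local finiteness in $T$ is not yet available — it is precisely one of the properties we are trying to establish for the geometric realization. The plan to overcome this is to bound $\AAA_z$ in terms of the separating hyperplanes of nearby chambers: any $H \in \AAA_z$ is a wall of some chamber $K^c$ with $z \in \ol{K^c}$ (by Lemma \ref{Hiswall} every $H$ is a wall of some chamber, and one pushes this chamber to lie adjacent to $z$), and all such chambers $K^c$ satisfy $d(K^{b_{j-1}}, K^c) \le d(K^{b_{j-1}}, K^{b'}) \le m$ once one checks that they separate from $K^{b_{j-1}}$ only by hyperplanes already in a finite set controlled by $S(K^b, K^{b'})$ together with $\AAA_z$. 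Concretely I would argue that the chambers containing $z$ in their closure form the residue governed by $\Cc_z$, invoke Remark \ref{WG:resioc} for the independence of this residue from the chosen base chamber, and then use that a minimal gallery from $K^{b_{j-1}}$ into this residue has bounded length by Lemma \ref{char:dist}, forcing $\AAA_z$ to be finite. Once $\AAA_z$ is finite at every $z$ on the segment, the neighbourhood $U_z$ is produced exactly as in the proof of Lemma \ref{compfin}, and the lemma follows.
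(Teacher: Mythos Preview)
Your proposal contains the right idea in its first paragraph but then abandons it for an unnecessarily complicated route that becomes circular. You write that ``the finitely many hyperplanes relevant near the segment are precisely the separating hyperplanes $S(K^b,K^{b'})$''; this single observation \emph{is} the proof, and the paper exploits it directly. Concretely: if $H \in \AAA$ meets $[x,y]$, then since $x \in K^b$ and $y \in K^{b'}$ lie on no hyperplane of $\AAA$ (Lemma~\ref{WG:simpcones}), $x$ and $y$ are on opposite sides of $H$, so $H \in S(K^b,K^{b'})$. The same holds for any segment $[x',y']$ with $x' \in K^b$, $y' \in K^{b'}$. The paper therefore takes open balls $B_\varepsilon(x) \subset K^b$, $B_\delta(y) \subset K^{b'}$ and observes that the convex ``tube'' $U = \bigcup_{x',y'} [x',y']$ satisfies $\sec(U) \subseteq S(K^b,K^{b'})$, finite by Lemma~\ref{char:dist}; a translate $B_{\min(\varepsilon,\delta)}(z)$ lies in $U$ for every $z \in [x,y]$.

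Your ``main obstacle'' paragraph, by contrast, has two genuine problems. First, you presuppose that $[x,y]$ threads through the chambers of a minimal gallery, but that is the content of Lemma~\ref{mingalcontsegm}, which is proved \emph{using} Lemma~\ref{segmfin}. Second, your plan to bound $|\AAA_z|$ invokes Proposition~\ref{char:res} (which already assumes $\AAA_z$ finite) and proposes to control chambers $K^c$ with $z \in \ol{K^c}$ via distances bounded ``by $S(K^b,K^{b'})$ together with $\AAA_z$'' --- using the very quantity you are trying to bound. The step ``push the chamber to lie adjacent to $z$'' is also not available at this point: moving a wall-carrying chamber toward $z$ by a gallery requires exactly the kind of connectedness through $z$ that local finiteness would provide. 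Drop all of this: the inclusion $\AAA_z \subseteq S(K^b,K^{b'})$ is immediate from the separating-hyperplane observation and already gives $|\AAA_z| < \infty$ without any gallery argument.
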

\begin{proof}
Take an open neighbourhood $U_x$ of $x$ in $K^b$, and an open neighbourhood $U_y$ of $y$ in $K^{b'}$. Take the union $U := \bigcup_{x' \in U_x, y' \in U_y} \sigma(x',y')$. Then $U$ contains $[x,y]$ and every hyperplane that meets $U$ separates $K^b$ and $K^{b'}$. Hence the set $\sec(U)$ is finite by Lemma \ref{char:dist}. We find $\varepsilon, \delta \in \RR_{>0}$ such that the open balls $B_\varepsilon(x)$, $B_\delta(y)$ satisfy $B_\varepsilon(x) \subset U_x$, $B_\delta(y) \subset U_y$. Assume $\varepsilon \geq \delta$, and let $z' \in B_\delta(z)$. Write $z' = z + v$ for $v \in V$. Then $x +v \in B_\delta(x)$, $y + v \in B_\delta(y)$ and
$$z' = z + v \in [x,y] +v = [x',y'] \subset U.$$
Choosing $U_z$ as $B_\delta(z)$ therefore satisfies $|\sec(U_z)| < \infty$.
\end{proof}

Remember that $T$ is defined as the convex hull of $K^b$, $b \in A$. In the following, we show that every point of $T$ is on an interval between two points in the interior of two chambers, which implies that $T$ is indeed convex.

\begin{lem}
Let $b,b' \in A$ with $d(K^b, K^{b'}) = m$, and let
$\Gamma(b,b')
$ the set of minimal galleries from $K^b$ to $K^{b'}$. Let $x \in K^b$, $y \in K^{b'}$. Then
$$
[x,y] \subset \bigcup_{K^c \in \gamma \in \Gamma(b,b')} \ol{K^c}.
$$
\label{mingalcontsegm}
\end{lem}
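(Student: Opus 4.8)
The plan is to parametrise the segment as $z(t)=(1-t)x+ty$, $t\in[0,1]$, to locate each point $z(t)$ inside a chamber of $\KKK$, and then to use the separation sets $S(\cdot,\cdot)$ together with their identification with the distance (Lemma~\ref{char:dist}) to certify that each chamber met by the segment actually lies on a minimal gallery. Since $T$ is the convex hull of the cones $K^c$, we have $[x,y]\subset T$. By Lemma~\ref{segmfin} and compactness, only finitely many hyperplanes of $\AAA$ meet $[x,y]$; as $x$ and $y$ avoid every hyperplane, the line through them is contained in none, so each such hyperplane meets the segment in exactly one point. This produces finitely many parameters $0<t_1<\dots<t_k<1$ at which $z(t_j)$ lies on a hyperplane. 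I would first record the basic observation that if $H\in\AAA$ meets the open segment then $x$ and $y$ lie strictly on opposite sides of $H$, whence $H\in S(K^b,K^{b'})$.

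Next I would show that on each subinterval $[0,t_1)$, $(t_j,t_{j+1})$, $(t_k,1]$ the point $z(t)$ remains in a single chamber. The ends give $z([0,t_1))\subset K^b$ and $z((t_k,1])\subset K^{b'}$. For an interior interval I proceed from left to right: the left endpoint $z(t_j)$ lies in $\overline{K^{c}}$ for the chamber $K^{c}$ governing the previous interval, and $\AAA_{z(t_j)}$ is finite, so Proposition~\ref{char:res} supplies a spherical local arrangement at $z(t_j)$. Being spherical, its chambers cover a full neighbourhood of $z(t_j)$ away from the finitely many local hyperplanes, so for small $\varepsilon>0$ the point $z(t_j+\varepsilon)$ lies in some $K^{c_j}$ with $z(t_j)\in\overline{K^{c_j}}$. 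A connectedness argument on $(t_j,t_{j+1})$ — using that $z$ avoids all hyperplanes there and that a point of $\overline{K^{c_j}}$ lying off every wall of $K^{c_j}$ already lies in $K^{c_j}$ — then upgrades this to $z((t_j,t_{j+1}))\subset K^{c_j}$.

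Having placed every off-hyperplane point in some chamber $K^c$, I would certify minimality by a separation count. The sub-segments $[x,z(t)]$ and $[z(t),y]$ cross only hyperplanes separating their endpoints' chambers, so $S(K^b,K^c)$ and $S(K^c,K^{b'})$ are both contained in $S(K^b,K^{b'})$; they are disjoint, since a hyperplane in both would place $x$ and $y$ on the same side; and their union exhausts $S(K^b,K^{b'})$, since any $H\in S(K^b,K^{b'})$ has $z(t)$ strictly on one side and hence separates $K^c$ from exactly one of $K^b,K^{b'}$. Therefore $|S(K^b,K^c)|+|S(K^c,K^{b'})|=|S(K^b,K^{b'})|=m$, so Lemma~\ref{char:dist} gives $d(K^b,K^c)+d(K^c,K^{b'})=d(K^b,K^{b'})$, and concatenating minimal galleries from $K^b$ to $K^c$ and from $K^c$ to $K^{b'}$ yields a minimal gallery in $\Gamma(b,b')$ passing through $K^c$. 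Every $z(t_j)$ is a limit of points of the chamber governing the interval on its left, hence lies in the closure of a chamber on a minimal gallery; together with the interior case this gives $[x,y]\subset\bigcup_{K^c\in\gamma\in\Gamma(b,b')}\overline{K^c}$.

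The main obstacle is the locating step of the second paragraph: when $z(t_j)$ sits on several hyperplanes simultaneously (a lower--dimensional face of $\overline{K^{c}}$, or a point lying on additional non-wall hyperplanes), the convenient ``cross one wall at a time'' picture fails and one cannot read off the next chamber directly from adjacency. The remedy is exactly to exploit the finiteness of $\AAA_{z(t_j)}$ from Lemma~\ref{segmfin} and pass to the finite, spherical local model of Proposition~\ref{char:res}, in which every point off the finitely many local hyperplanes is guaranteed to lie in a chamber; the separation bookkeeping of the third paragraph is then routine and combinatorial.
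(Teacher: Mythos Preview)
Your proposal is correct and follows essentially the same route as the paper: both arguments partition $[x,y]$ by its finitely many hyperplane crossings (via Lemma~\ref{segmfin}), invoke the finite local model of Proposition~\ref{char:res} at each crossing point to locate the next chamber, and then use the separation count together with Lemma~\ref{char:dist} to certify that every chamber so obtained lies on a minimal gallery in $\Gamma(b,b')$. The only cosmetic difference is that the paper organises the argument as an induction on the number $k$ of crossing points and names the next chamber explicitly as the one opposite to the current chamber in the spherical residue, whereas you sweep left to right and argue existentially from sphericity; neither choice adds or omits any real content.
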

\begin{proof}
The interval $[x,y]$ only meets the finite set of hyperplanes $S(K^b, K^{b'})$ by Lemma \ref{segmfin}. As $x$, $y$ are not contained in any hyperplane in $\AAA$, the interval $[x,y]$ is not contained in a hyperplane as well.
Let $x_1, \dots, x_k \in [x,y]$ be the points such that $\sec( (x_i, x_{i+1})) = \emptyset$, $\AAA_{x_j} \neq \emptyset$ for all $1 \leq i \leq k-1$, $1 \leq j \leq k$, and $\sec([x,y]) = \bigcup_{i=1}^k \AAA_{x_i}$. These points exist by Lemma \ref{char:dist}.

We show the statement by induction on $k$. Assume $k = 0$, then $b=b'$ and $K^b = K^{b'}$ by Lemma \ref{bKbbij}. As $x,y \in K^b$ and $K^b$ is convex, $[x,y] \subset K^b$, as required.
If $k=1$ then $(x,x_1) \subset K^b$, $(x_1, y) \subset K^{b'}$. Let $K^b = K^{b_0}$, $\dots$, $K^{b_m} = K^{b'}$ be a minimal gallery from $K^b$ to $K^{b'}$. By definition $x_1 \in \ol{K^{b}}$, and $x_1 \in \ol{K^{b'}}$. As $x_1 \in \ol{K^b} \cap \psi_b(\alpha_i^\perp)$, it is also contained in $\ol{K^{b_1}}$, and inductively we obtain $x_1 \in \ol{K^{b_i}}$ for $1 \leq i \leq m$. So our claim holds for $k=1$.

So let $k \geq 2$. We show that every open segment $(x_j, x_{j+1})$ is contained in some chamber $K^{c_j}$ for $1 \leq j <k$. It is enough to show that $(x_1, x_2)$ is contained in a chamber, then the statement follows inductively by substituting $x$ with a point on $(x_1,x_2)$. As $x_1 \in \ol{K^b}$, let $J \subset I$ such that $j \in J$ if and only if $x_1 \in \psi_b(\alpha_j)^\perp$. 

Define $R_{x_1} := \{\alpha \in R \mid x_1 \in \alpha^\perp\}$, $W = \langle R_{x_1} \rangle$, let $\pi_{x_1}^\ast: V^{\ast} \to (V/W^\perp)^\ast$, $\pi_{x_1}^\ast(\alpha)(v+W^\perp) = \alpha(v)$, $\pi_{x_1}: V \mapsto V/W^\perp, v \mapsto v + W^\perp$.
As $\AAA_{x_1}$ is finite, we can apply Proposition \ref{char:res} to find a spherical Cartan graph $\Cc_{x_1}$, together with a set of chambers in one to one correspondence to the objects $A_{x_1}$. In particular, as $\Cc_{x_1}$ is spherical, there exists an object $c_1 \in A_{b,J}$, such that the chamber $K^{c_1}$ has maximal distance to $K^b$ in the spherical arrangement associated to $\Cc_{x_1}$. Let $B^b \cap R_{x_1} = \{\beta_1, \dots, \beta_l\}$, then $W^{c_1} = \{-\beta_1, \dots, -\beta_l\}$, hence $(x_1, x_2) \subset K^{c_1}$ follows.
We can conclude that $(x_j, x_{j+1})$ is contained in $K^{c_j}$ for $1 \leq j < k$.

Let $z \in (x_j, x_{j+1})$ for some $1 \leq j <k$. By counting separating hyperplanes we obtain $d(K^b, K^{c_j}) + d(K^{c_j}, K^{b'}) = d(K^b, K^{b'})$, hence there is a minimal gallery $K^b = K^{b_0}, \dots, K^{b_\lambda} = K^{c_j}, \dots, K^{b'} = K^{b_m}$.
By induction we obtain
$$
[x,y] = [x,z] \cup [z,y] \subset \bigcup_{K^{c'} \in \gamma \in \Gamma(b,c)} \ol{K^{c'}} \cup \bigcup_{K^{c'} \in \gamma \in \Gamma(c,b')} \ol{K^{c'}} \subset \bigcup_{K^{c'} \in \gamma \in \Gamma(b,b')} \ol{K^{c'}},
$$
since every minimal gallery containing $K^c$ yields minimal galleries from $K^b$ to $K^c$ as well as from $K^c$ to $K^{b'}$.
\end{proof}

\begin{lem}
For $b,b' \in A$ the set $\{K^c \mid K^c \in \gamma \in \Gamma(b,b')\}$ is finite.
\label{mingalfin}
\end{lem}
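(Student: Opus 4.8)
The goal is to show that the set of chambers lying on some minimal gallery between $K^b$ and $K^{b'}$ is finite. The natural strategy is to bound the distance of any such chamber from $K^b$ and then argue that only finitely many chambers occur at each such distance. I would first invoke Lemma \ref{char:dist}, which identifies $d(K^b,K^{b'})$ with $|S(K^b,K^{b'})|$, so $m := d(K^b,K^{b'})$ is finite. If $K^c$ lies on a minimal gallery from $K^b$ to $K^{b'}$, then by the additivity of distance along a minimal gallery we have $d(K^b,K^c) + d(K^c,K^{b'}) = m$, so in particular $d(K^b,K^c) \le m$. Hence it suffices to prove that $\{K^c \mid d(K^b,K^c) \le m\}$ is finite, or even just that for each fixed $n \le m$ the set $\{K^c \mid d(K^b,K^c) = n\}$ is finite.

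The key observation is that $S(K^b,K^c) \subseteq S(K^b,K^{b'})$ for any $K^c$ on a minimal gallery: every hyperplane separating $K^b$ from $K^c$ must, by the containment argument already used in the proof of Lemma \ref{char:dist}, lie among the hyperplanes separating $K^b$ from $K^{b'}$ (this is the standard fact that a minimal gallery crosses exactly the separating hyperplanes, each once, established in the preceding Corollary). Thus every chamber $K^c$ on a minimal gallery is determined by the subset $S(K^b,K^c)$ of the \emph{finite} set $S(K^b,K^{b'})$. Since $K^c$ is completely determined by the collection of halfspaces $D_H(K^c)$ for $H \in S(K^b,K^{b'})$ — a chamber at distance $n$ from $K^b$ within the finite hyperplane configuration $S(K^b,K^{b'})$ is pinned down by which side of each of these finitely many hyperplanes it lies on, together with the fact that it agrees with $K^b$ on all other walls — there can be at most $2^{|S(K^b,K^{b'})|}$ such chambers. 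This gives finiteness.

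The main obstacle is to make precise the claim that a chamber $K^c$ on a minimal gallery is uniquely determined by the data $S(K^b,K^c) \subseteq S(K^b,K^{b'})$, i.e.\ by the sign vector recording $D_H(K^c)$ for each $H \in S(K^b,K^{b'})$. For this I would argue that if $K^c$ and $K^{c'}$ are both on minimal galleries and satisfy $S(K^b,K^c) = S(K^b,K^{c'})$, then no hyperplane of $\AAA$ separates them: any $H \in S(K^c,K^{c'})$ would have to separate exactly one of them from $K^b$, forcing $H \in S(K^b,K^c) \,\triangle\, S(K^b,K^{c'}) = \emptyset$, a contradiction. Then Lemma \ref{char:dist:anch} gives $d(K^c,K^{c'}) = |S(K^c,K^{c'})| = 0$, whence $K^c = K^{c'}$ by Lemma \ref{bKbbij}. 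This injectivity of the map $K^c \mapsto S(K^b,K^c)$ into the finite power set $\mathcal{P}(S(K^b,K^{b'}))$ immediately yields the bound $2^{m}$ on the number of chambers and completes the proof.
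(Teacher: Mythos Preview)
Your argument is correct but takes a different route from the paper. The paper's proof is a one-line locally-finite-graph argument: by Proposition~\ref{char:adjacency} each chamber has at most $|I|$ adjacent chambers, so by induction there are at most $\sum_{k=0}^{m} |I|^k$ chambers $K^c$ with $d(K^b,K^c)\le m$, and every chamber on a minimal gallery from $K^b$ to $K^{b'}$ lies in this ball.

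Your approach instead exploits the separating-hyperplane description of distance from Lemma~\ref{char:dist}: you show that the assignment $K^c \mapsto S(K^b,K^c)$ is an injection from the set of chambers on minimal galleries into $\mathcal{P}\bigl(S(K^b,K^{b'})\bigr)$, yielding the bound $2^m$. The injectivity step via the symmetric-difference argument and Lemma~\ref{char:dist:anch} is clean, and the containment $S(K^b,K^c)\subseteq S(K^b,K^{b'})$ does follow from the additivity $d(K^b,K^c)+d(K^c,K^{b'})=m$ together with the triangle-type inclusion used in the proof of Lemma~\ref{char:dist}. Your route uses more of the ambient machinery and gives a sharper numerical bound, but the paper's argument is more elementary and avoids invoking Lemma~\ref{char:dist} at all: it only needs that $I$ is finite and that adjacency is controlled by $I$.
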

\begin{proof}
There exist only $|I|$ chambers adjacent to $K^b$, inductively there exist only finitely many chambers $K^c$ such that $d(K^b,K^c) \leq d(K^b, K^{b'})$, and $\{K^c \mid K^c \in \gamma \in \Gamma(b,b')\}$ is contained in this set.
\end{proof}

\begin{fol}\label{mgcs2}
Let $x \in \ol{K^b}$, $y \in \ol{K^{b'}}$ for $b,b' \in A$. Then 
$$[x,y] \subset \bigcup_{K^c \in \gamma \in \Gamma(b,b')} \ol{K^c}.$$
\end{fol}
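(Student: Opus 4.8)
The plan is to reduce the statement to Lemma \ref{mingalcontsegm} by an approximation argument, exploiting that the union of closed cones on the right-hand side is itself closed. Write $U := \bigcup_{K^c \in \gamma \in \Gamma(b,b')} \ol{K^c}$ for that set. By Lemma \ref{mingalfin} the collection $\{K^c \mid K^c \in \gamma \in \Gamma(b,b')\}$ is finite, so $U$ is a finite union of closed simplicial cones and hence closed in $V$. This closedness is precisely the property that will let me pass from interior points to boundary points.

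First I would approximate $x$ and $y$ from the interior. Since $K^b$ is a nonempty open convex cone (Lemma \ref{WG:simpcones}) with closure $\ol{K^b}$, for any fixed $p \in K^b$ the half-open segment $(x,p]$ lies in $K^b$; hence $x_n := (1-\tfrac1n)x + \tfrac1n p \in K^b$ and $x_n \to x$. Choosing similarly $q \in K^{b'}$ and setting $y_n := (1-\tfrac1n)y + \tfrac1n q \in K^{b'}$ gives $y_n \to y$ with $y_n \in K^{b'}$.

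Next, for each $n$ the points $x_n \in K^b$ and $y_n \in K^{b'}$ are interior, so Lemma \ref{mingalcontsegm} applies and yields $[x_n,y_n] \subset U$ (the pair $b,b'$, and hence $\Gamma(b,b')$ and $U$, being fixed throughout). Finally, fix $z \in [x,y]$ and write $z = (1-t)x + ty$ for some $t \in [0,1]$; setting $z_n := (1-t)x_n + t y_n$ we have $z_n \in [x_n,y_n] \subset U$ and $z_n \to z$. As $U$ is closed, $z \in U$, and since $z \in [x,y]$ was arbitrary we conclude $[x,y] \subset U$, as claimed.

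The step requiring the most care, and the one I would flag as the main (if mild) obstacle, is the closedness of $U$: it is essential that $\Gamma(b,b')$ involves only finitely many chambers, which is exactly what Lemma \ref{mingalfin} supplies. Without this finiteness an infinite union of closed cones need not be closed, and the limit argument passing $z_n \to z$ into $U$ would fail.
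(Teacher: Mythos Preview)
Your proposal is correct and follows essentially the same approach as the paper's own proof: approximate $x$ and $y$ by interior points of $K^b$ and $K^{b'}$, apply Lemma \ref{mingalcontsegm} to the approximating segments, and pass to the limit using that the union $U$ is closed by Lemma \ref{mingalfin}. The paper uses a continuous parameter $\varepsilon \to 0$ along $[x,x']$ and $[y,y']$ rather than your discrete sequences $x_n,y_n$, but the argument is otherwise identical, and your justification that $(x,p] \subset K^b$ via convexity of the open cone is in fact slightly more explicit than the paper's version.
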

\begin{proof}
Let $p: [0,1] \mapsto [x,y]$ be a continuous parametrization with $p(0) = x$, $p(1) =y$.
Let $x' \in K^b$, $y' \in K^{b'}$ and parametrize the segments $[x,x']$, $[y,y']$ continuously with $p_x: [0,1] \to [x,x']$, $p_y: [0,1] \to [y,y']$, such that $p_x(0) = x$, $p_x(1) = x'$, $p_y(0) = y$ and $p_y(1) = y'$ .  Then by Lemma \ref{mingalcontsegm} we have
$$[p_x(\varepsilon),p_y(\varepsilon)] \subset \bigcup_{K^c \in \gamma \in \Gamma(b,b')} \ol{K^c}
$$
for all $0 < \varepsilon \leq 1$. As $\bigcup_{K^c \in \gamma \in \Gamma(b,b')} \ol{K^c}$ is a finite union of closed chambers by Lemma \ref{mingalfin}, it is closed again. Since $p_x$, $p_y$ are continuous,
$[x,y] = [p_x(0), p_y(0)] \subset \bigcup_{K^c \in \gamma \in \Gamma(b,b')} \ol{K^c}$.
\end{proof}

\begin{lem}\label{char:T}
Let $T_0 := \bigcup_{b \in A} K^b$. The cone $T$ satisfies
$$
T = \bigcup_{x,y \in T_0} [x,y].
$$
\end{lem}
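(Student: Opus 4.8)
The inclusion $\bigcup_{x,y\in T_0}[x,y]\subseteq T$ is immediate, since $T$ is by definition convex and contains $T_0$, hence contains every segment between two of its points. The content lies in the reverse inclusion, and the plan is to prove by induction on $n$ that every point $t$ expressible as a convex combination $t=\sum_{i=1}^{n}\lambda_i z_i$ with $z_i\in T_0$ lies on a segment $[p,q]$ with $p,q\in T_0$; as $T=\mathrm{conv}(T_0)$ is exactly the set of such finite convex combinations, this suffices. For $n\le 2$ there is nothing to show. For the inductive step (so $\lambda_1<1$) I would write $t=(1-\lambda_1)m+\lambda_1 z_1$ with $z_1\in T_0$ and $m=\tfrac{1}{1-\lambda_1}\sum_{i\ge 2}\lambda_i z_i\in\mathrm{conv}(z_2,\dots,z_n)$. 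By the induction hypothesis $m\in[u,v]$ for some $u,v\in T_0$, which reduces everything to the configuration $t\in[z,m]$ with $z:=z_1\in T_0$ and $m\in[u,v]$, $u,v\in T_0$.

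If $t=z$ or $t=m$ we are done at once, so assume $t\in(z,m)$. The crux — and what I expect to be the main obstacle — is that $m$ is a priori known to lie only in the \emph{closure} of a chamber: by Corollary \ref{mgcs2} applied to $[u,v]$ we have $m\in\ol{K^e}$ for some $e$, whereas we must exhibit an endpoint lying in an \emph{open} chamber. The key tool against this is Lemma \ref{segmfin}: applied to the segment $[u,v]$ between the open-chamber points $u,v$, it produces a neighbourhood $U_m$ of $m$ meeting only finitely many hyperplanes of $\AAA$, and after shrinking $U_m$ we may assume it meets only those passing through $m$, namely the finite set $\AAA_m$.

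I would then slide the far endpoint off the boundary along the line through $z$ and $t$. Since $z\in T_0$ sits strictly inside a chamber, no hyperplane of $\AAA$ contains $z$, hence none contains the whole line $\langle z,t\rangle$; this line therefore meets $\bigcup_{H\in\AAA_m}H$ in only finitely many points. Thus I can pick a point $q$ on this line, on the far side of $t$ from $z$ and inside $U_m$, avoiding those finitely many points, so that $q\notin\bigcup_{H\in\AAA}H$. Because $T$ is open (it is the convex hull of the open set $T_0$, and the convex hull of an open set is open) we may take $U_m\subseteq T$, so $q\in T$; and since $\AAA_m$ is finite, Proposition \ref{char:res} applies at $m$ and shows that the chambers $K^c$ with $m\in\ol{K^c}$ form the (spherical) residue at $m$ and hence cover a neighbourhood of $m$. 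Therefore $q$ lies in one of these $K^c$, i.e.\ $q\in T_0$. By construction $t\in[z,q]$ with $z,q\in T_0$, which closes the induction. Everything delicate is concentrated in this last step: the upgrade from $m\in\ol{K^e}$ to a genuine interior endpoint $q$, made possible by the local finiteness of Lemma \ref{segmfin}, the fact that $z$'s membership in an open chamber keeps $\langle z,t\rangle$ out of every hyperplane, and the local spherical structure of Proposition \ref{char:res} that forces a generic nearby far point to land in some open chamber rather than merely in $T\setminus\bigcup_{H}H$.
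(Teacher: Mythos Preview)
Your argument is correct and rests on the same three ingredients the paper uses (Corollary~\ref{mgcs2}, Lemma~\ref{segmfin}, Proposition~\ref{char:res}), but it is organised differently. The paper does not induct on the length of a convex combination; instead it shows directly that $T':=\bigcup_{x,y\in T_0}[x,y]$ is convex: given $z,z'\in T'$ it places each in some $\ol{K^d},\ol{K^{d'}}$ via Lemma~\ref{mingalcontsegm}, covers $[z,z']$ by closed chambers via Corollary~\ref{mgcs2}, and then for an arbitrary $z''\in[z,z']$ invokes the spherical residue of Proposition~\ref{char:res} to pick the chamber $K^{d^*}$ \emph{opposite} to $K^{d''}$ at $z''$, so that $z''$ lies on a segment joining interior points of these two chambers. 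Your variant instead reuses the already-available interior point $z$ and slides the other endpoint along the line $\langle z,m\rangle$ to a generic nearby $q$; since every $H\in\AAA_m$ meets that line only at $m$, any $q\in U_m\setminus\{m\}$ on the line avoids all of $\AAA$, and the spherical residue then forces $q$ into some open $K^c$. Both constructions exploit exactly the same local picture; yours is slightly more economical in that it recycles one endpoint, while the paper's opposite-chamber choice avoids the inductive scaffolding entirely. One small remark: your appeal to ``the chambers $K^c$ with $m\in\ol{K^c}$ cover a neighbourhood of $m$'' is correct but is not the literal statement of Proposition~\ref{char:res}; it follows because the $\pi_m(K^c)$ are the chambers of the spherical arrangement in $V_m$ and hence tile it, and near $m$ membership in $K^c$ is governed only by the $\AAA_m$-constraints.
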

\begin{proof}
Let $T' := \bigcup_{x,y \in T_0} [x,y]$, then the inclusion $T' \subset T$ holds since $T$ is convex. We show that $T'$ is convex.
Let $z, z' \in T'$, then we find $b,b',c, c' \in A$ with $x \in K^b$, $x' \in K^{b'}$, $y \in K^{c}$, $y' \in K^{c'}$, such that $z \in [x,y]$, $z' \in [x',y']$. It follows from Lemma \ref{mingalcontsegm} that there exist chambers $K^{d}$, $K^{d'}$ with $z \in \ol{K^d}$, $z' \in \ol{K^{d'}}$.
By Corollary \ref{mgcs2} we obtain
$$[z,z'] \subset \bigcup_{K^c \in \gamma \in \Gamma(d,d')} \ol{K^c}.$$
Let $z'' \in [z,z']$, then $\AAA_{z''}$ is finite, and there exists a chamber $K^{d''}$ such that $z'' \in \ol{K^{d''}}$. By Proposition \ref{char:res} we obtain that there exists an object $d^\ast$ of maximal distance to $d''$ in the spherical Weyl groupoid induced at $z''$. Therefore $K^{d^\ast}$ has maximal distance to $K^{d''}$ in the respective spherical arrangement, and $z''$ is on a segment between a point in $K^{d''}$ and $K^{d^\ast}$.
\end{proof}

\begin{prop}
The pair $(\AAA, T)$ is a crystallographic Tits arrangement with respect to $R$.
\end{prop}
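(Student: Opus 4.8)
The plan is to assemble the preceding lemmas to verify, in turn, each clause of the definitions involved: that $(\AAA,T)$ is a hyperplane arrangement (so that $T$ is an open convex cone, every $H\in\AAA$ meets $T$, and $\AAA$ is locally finite in $T$), that it is simplicial with chambers exactly the $K^b$, that it is thin, and finally that it is crystallographic with respect to $R$. First I would show that $T$ is an open convex cone. Convexity is built into its definition as a convex hull and is made explicit by the identity $T=\bigcup_{x,y\in T_0}[x,y]$ of Lemma \ref{char:T}; it is a cone because $T_0=\bigcup_{b\in A}K^b$ is a union of the cones $K^b$ and the convex hull of cones is again a cone. For openness, given $z\in T$ I write $z\in[x,y]$ with $x\in K^b$, $y\in K^{b'}$ using Lemma \ref{char:T}; choosing balls around $x$ and $y$ inside the open cones $K^b,K^{b'}$, their convex hull is open, is contained in $T$ by Lemma \ref{char:T}, and contains $z$, so $z$ is interior.

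Local finiteness then follows from Lemma \ref{segmfin}, which produces for any point on a segment between two chamber interiors a neighbourhood meeting only finitely many hyperplanes; intersecting it with the (now known) open set $T$ yields the required neighbourhood $U_z\subset T$, and every point of $T$ lies on such a segment by Lemma \ref{char:T}. That each $H\in\AAA$ meets $T$ comes from Lemma \ref{Hiswall}, which says $H$ is a wall of some $K^b$, together with Proposition \ref{char:adjacency}, which makes $H$ separate $K^b$ from an adjacent $K^{b'}$: the segment from a point of $K^b$ to a point of $K^{b'}$ lies in $T$ and crosses $H$. This establishes the hyperplane-arrangement axioms.

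The technical heart is the identification of the chambers. The inclusion $\bigcup_b K^b\subseteq T\setminus\bigcup_{H\in\AAA}H$ is Lemma \ref{WG:simpcones}. Conversely, if $z\in T$ avoids every hyperplane, then by Lemmas \ref{char:T} and \ref{mingalcontsegm} we have $z\in\ol{K^c}$ for some $c$; since the walls of $K^c$ lie in $\AAA$ (because $B^c\subseteq R$) and $z$ lies on no hyperplane, $z$ avoids every face of $\ol{K^c}$, whence $z\in K^c$. Thus $T\setminus\bigcup_H H$ is the disjoint union of the open convex sets $K^b$, the disjointness following from Lemma \ref{char:dist:anch}, so these are precisely its connected components and $\KKK(\AAA,T)=\{K^b\mid b\in A\}$. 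Each $K^b$ is an open simplicial cone by Lemma \ref{WG:simpcones}, so $(\AAA,T)$ is simplicial; and by Lemma \ref{char:wall} the walls of $K^b$ in the arrangement sense are exactly $W^b=\{\alpha^\perp\mid\alpha\in B^b\}\subseteq\AAA$, so the arrangement is thin, hence a Tits arrangement.

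It remains to see that $R$ is a root system associated to $(\AAA,T)$, which holds because $R$ avoids $0$ and is symmetric (the real roots $(R\re)^b$ have these properties and $\psi_b$ is injective and linear), and that the arrangement is crystallographic: the root basis of $K^b$ is $B^b$, and since $R=\psi_b((R\re)^b)$, applying $\psi_b$ to axiom (R1) for $(R\re)^b$ shows every element of $R$ is a non-negative or a non-positive $\NN$-combination of $B^b$, which is exactly Definition \ref{defcry}. I expect the third step to be the main obstacle, since it is where the abstractly constructed cones $K^b$ must be matched with the topologically defined chambers of $(\AAA,T)$; note that the openness of $T$ obtained in the first step is precisely what lets the local-finiteness neighbourhoods lie inside $T$, so these two steps carry the weight, while thinness and the crystallographic property are short consequences.
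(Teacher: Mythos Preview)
Your proof is correct and follows essentially the same route as the paper's: both assemble Lemmas \ref{char:T}, \ref{segmfin}, \ref{mingalcontsegm}, \ref{WG:simpcones}, and \ref{Hiswall} to check local finiteness, identify the connected components of $T\setminus\bigcup_{H\in\AAA}H$ with the $K^b$, verify thinness, and read off the crystallographic property from (R1). You are in fact more careful than the paper on two points: you explicitly argue that $T$ is open (the paper tacitly assumes this, though it is part of the definition of a hyperplane arrangement), and you spell out why a point of $T\setminus\bigcup H$ lying in some $\ol{K^c}$ must lie in $K^c$ itself. The only item the paper adds that you omit is the one-line observation that $R$ is reduced, via (R2); this is not required by the proposition as stated but is used in the subsequent corollary, so you may wish to include it.
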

\begin{proof}
By Lemma \ref{char:T} every point $z \in T$ is on an interval $[x,y]$, $x \in K^b$, $y \in K^{b'}$, and by Lemma \ref{segmfin} we find a neighbourhood $U_z$ such that $\sec(U_z)$ is finite. Therefore $\AAA$ is locally finite in $T$.

Let $K$ be a connected component of $T \setminus (\bigcup_{H \in \AAA} H)$, and let $x \in K$. As a direct result of Lemma \ref{mingalcontsegm} $x$ is contained in $\ol{K^b}$ for some $b \in A$. By definition of $K$ it follows that $K \subset K^b$, and by Lemma \ref{WG:simpcones} we get equality.

Furthermore $\AAA$ is thin, as by definition every wall of $K^b$, $b\in A$, is in $\AAA$. From Lemma \ref{Hiswall} it follows that every $H \in \AAA$ is a wall of some chamber, so it meets $T$.

It follows also by definition that $\AAA = \{\alpha^\perp \mid \alpha \in R\}$, so $R$ is a root system for $\AAA$. Furthermore $R$ is crystallographic since $R = \psi_b(R^b)$, so every root is a non-negative or non-positive integral linear combination of $B^b$ for all $b \in A$.

Finally $R$ is reduced since the roots $(R\re)^a$ satisfy property (R2).
\end{proof}

\begin{rem}
Choosing a different object $a' \in A$ to begin with yields a combinatorially equivalent crystallographic arrangement, as those differ exactly by an element in $\GL_r(\ZZ)$.
Since equivalent Cartan graphs have the same sets of real roots, those also yield combinatorially equivalent crystallographic arrangements.
\end{rem}

\begin{defi}
Given a connected simply connected Cartan graph $\Cc$ permitting a root system of type $\Cc$, we call the crystallographic arrangement $(\AAA,T)$ as constructed above \textit{the geometric realization of $\Cc$ (with respect to $a$)}.
\end{defi}

\begin{fol}\label{cor:cor}
There exists a one-to-one correspondence between connected, simply connected Cartan graphs permitting a root system and crystallographic Tits arrangements with reduced root system.

Under this correspondence, equivalent Cartan graphs correspond to combinatorially equivalent Tits arrangements and vice versa, giving rise to a one-to-one correspondence  between the respective equivalence classes.
\end{fol}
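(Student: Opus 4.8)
The plan is to assemble the corollary from the two constructions of Sections \ref{sec_cryprop} and \ref{geom.real} and to check that they are mutually inverse at the level of equivalence classes. Write $\Phi$ for the assignment sending a crystallographic Tits arrangement $(\AAA,T)$ with reduced root system $R$ to the Cartan graph $\Cc(\AAA,T,R)$ of Proposition \ref{CAtoCS}; by the propositions following it, this graph is connected, simply connected, and the family $(R^K)_{K\in\KKK}$ is a root system of its type, so $\Phi$ lands in the target class. Write $\Psi$ for the assignment sending a connected simply connected Cartan graph $\Cc$ permitting a root system to its geometric realization $(\AAA,T)$ with respect to a chosen base object $a$; the final proposition of Section \ref{geom.real} shows this is a crystallographic Tits arrangement with reduced root system $R=\psi((R\re)^a)$. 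Since different choices of $a$ yield combinatorially equivalent arrangements, $\Psi$ descends to a well-defined map on equivalence classes, and so does $\Phi$ by the analogous remark in Section \ref{sec_cryprop}.

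First I would show $\Phi\circ\Psi$ is the identity on equivalence classes. Starting from $\Cc$, form $(\AAA,T)=\Psi(\Cc)$. By Lemma \ref{bKbbij} the chambers $\KKK$ are in bijection with the objects $A$ via $b\mapsto K^b$, so the object set of $\Phi(\Psi(\Cc))$ is identified with $A$. Under this identification Proposition \ref{char:adjacency} shows that $i$-adjacency of chambers is exactly the relation $\rho_i$, so the reflection maps agree. It then remains to match Cartan matrices: the relation $\beta^i_j=c^i_j\,\psi_b(\alpha_i)+\psi_b(\alpha_j)$ defining the Cartan matrix at $K^b$ in the sense of Proposition \ref{adjcham2} is, after transport along $\psi_b$, precisely the defining relation \eqref{fwg_eq:sia} for $\sigma^b_i$, so the structure constants coincide once the root bases are indexed compatibly. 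This yields an equivalence $\Phi(\Psi(\Cc))\cong\Cc$, up to the permitted relabelling of $I$.

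Next I would show $\Psi\circ\Phi$ is the identity on equivalence classes. Starting from $(\AAA,T)$ with reduced $R$, form $\Cc=\Phi(\AAA,T)$ and take the base object to be a chamber $K\in\KKK=A$. If in building the realization $\Psi(\Cc)$ one chooses the auxiliary basis $\{\beta_i\}$ to be the actual root basis $B^K$, then $\psi$ becomes the inverse of the coordinate map $\phi_K$. Since $(R\re)^K=R^K=\phi_K(R)$ by the last proposition of Section \ref{sec_cryprop}, we obtain $R'=\psi((R\re)^K)=\phi_K^{-1}(\phi_K(R))=R$, so the realization has the same roots, the same hyperplanes, and the same chambers as $(\AAA,T)$, while its Tits cone is recovered as $T$ by Lemma \ref{char:T}. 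For any other choice of auxiliary basis the result differs by an element of $\GL(V)$, hence $\Psi(\Phi(\AAA,T))$ is combinatorially equivalent to $(\AAA,T)$.

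Finally, the statement about equivalence classes is exactly the content of the two functoriality remarks already established: combinatorially equivalent arrangements yield equivalent Cartan graphs, and equivalent Cartan graphs (having identical real root systems) yield combinatorially equivalent geometric realizations. I expect the main obstacle to be the bookkeeping in $\Phi\circ\Psi$, namely verifying that the compatible indexing of the bases $B^{K^b}$ extracted from the arrangement matches the fixed indexing of the $\alpha_i$ underlying the given $\rho_i$ and $C^a$, so that the two Cartan graphs agree on the nose rather than merely abstractly. The second delicate point is confirming, via Lemma \ref{char:T}, that the reconstructed Tits cone is genuinely $T$ and not some strictly larger convex cone.
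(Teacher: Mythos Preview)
Your proposal is correct and follows exactly the approach the paper intends: the corollary is stated without proof in the paper, being treated as an immediate consequence of the two constructions in Sections~\ref{sec_cryprop} and~\ref{geom.real} together with the remarks on equivalence; you have simply written out the verification that $\Phi$ and $\Psi$ are mutually inverse, invoking precisely the right ingredients (Lemma~\ref{bKbbij}, Proposition~\ref{char:adjacency}, Corollary~\ref{mapconc}, the identification $(R\re)^K=\phi_K(R)$, and Lemma~\ref{char:T}). The two delicate points you flag are real but minor: the indexing issue is handled because the compatible indexing in the arrangement is \emph{defined} via the type function, which in turn is induced by the given $I$-labelling of the $\alpha_i$; and the Tits cone issue resolves because $T$, being convex and covered (up to hyperplanes) by the open chambers, is exactly their convex hull.
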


\section{Parabolic subarrangements and restrictions of crystallographic arrangements}
\label{subarr}

In this section we consider substructures of crystallographic Tits arrangements and show that the crystallographic property is inherited by these substructures in most cases.

\subsection{Subarrangements}

\begin{defi}
Assume $(\AAA,T)$ is a Tits arrangement associated to $R$, let $x \in \ol{T}$. Set
\begin{align*}
W_x &:= \supp_\AAA(\{x\}),&
V_x &:= V/W_x,\\
\pi &:= \pi_x: V \mapsto V_x, v \to v + W_x,&
T_x &:= \pi(T),\\
\AAA_x^\pi &:= \pi(\AAA_x),&
\KKK_x &:= \{K \in \KKK\ |\ x \in \overline{K}\},\\
R_x &:= \{\alpha \in R \mid \alpha^\perp \in \AAA_x\},&
B^K_x &:= B^K \cap R_x.
\end{align*}
\end{defi}

The following are the main results about parabolic subarrangements from \cite{CMW}, they describe how to understand a parabolic subarrangement as a Tits arrangement.

\begin{prop}[{\cite[Proposition 4.4]{CMW}}]
Let $(\AAA, T)$ be a Tits arrangement associated to $R$, $x \in T$. Then $(\AAA_x^\pi, T_x)$ is a Tits arrangement associated to $R_x$.
\end{prop}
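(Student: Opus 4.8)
The plan is to exploit that $x$ lies in the \emph{open} cone $T$, which forces the localisation to be both spherical and finite, and then to transport simpliciality and thinness from the chambers of $(\AAA,T)$ touching $x$ to the projected arrangement. First I would record two consequences of $x\in T$. By Lemma \ref{compfin}, choosing a neighbourhood $U_x$ with compact closure contained in $T$ and $\AAA_x=\sec(U_x)$, we get $\AAA_x\subseteq\sec(\ol{U_x})$, which is finite; hence $\AAA_x^\pi=\pi(\AAA_x)$ is finite and trivially locally finite. Moreover $x\in W_x$ by the definition $W_x=\supp_\AAA(\{x\})=\bigcap_{H\in\AAA_x}H$, so $\pi(x)=0$ and therefore $0\in T_x=\pi(T)$. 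Since $T_x$ is an open convex cone (the image of one under the linear surjection $\pi$) containing the origin, it must be all of $V_x$. Thus the statement reduces to showing that $(\AAA_x^\pi,V_x)$ is a finite, thin, simplicial arrangement whose hyperplanes are exactly the $\ol\alpha^\perp$ for $\alpha\in R_x$, where $\ol\alpha\in V_x^\ast$ denotes the functional induced by $\alpha$ (well defined since $\alpha|_{W_x}=0$ for $\alpha\in R_x$).

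The technical heart is the claim that for every chamber $K\in\KKK_x$ the set $\{\ol\alpha\mid\alpha\in B^K_x\}$ is a basis of $V_x^\ast$, where $B^K_x=B^K\cap R_x$ collects the walls of $K$ through $x$. Linear independence is immediate, as $B^K_x$ is a subset of the basis $B^K$ of $V^\ast$. For spanning I would first note that $V_x^\ast$ is canonically the annihilator of $W_x$ in $V^\ast$, which equals $\langle R_x\rangle$ because $W_x=\bigcap_{\alpha\in R_x}\alpha^\perp$. It then suffices to show $\langle R_x\rangle=\langle B^K_x\rangle$, and here the key input is Lemma \ref{posorneg}: any $\alpha\in R_x$ can be written as $\pm\sum_{\beta\in B^K}c_\beta\beta$ with all $c_\beta\geq0$. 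Since $x\in\ol{K}$ we have $\beta(x)\geq0$ for every $\beta\in B^K$, so evaluating at $x$ gives $0=\alpha(x)=\pm\sum_{\beta\in B^K}c_\beta\beta(x)$, a sum of non-negative terms. Hence $c_\beta=0$ whenever $\beta(x)>0$, so $\alpha$ is a combination of those $\beta\in B^K$ with $\beta(x)=0$, i.e.\ of $B^K_x$, proving the claim.

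With the claim in hand, simpliciality and thinness follow. For $K\in\KKK_x$ set $\ol D_K:=\bigcap_{\alpha\in B^K_x}\ol\alpha^+$; by the claim this is an open simplicial cone in $V_x$, whose walls are the $\ol\alpha^\perp$, $\alpha\in B^K_x$, all lying in $\AAA_x^\pi$ (this is where Remark \ref{rem:scs} enters). Applying the spanning argument to an arbitrary $\gamma\in R_x$ shows $\ol\gamma$ is a non-negative or non-positive combination of $\{\ol\alpha\mid\alpha\in B^K_x\}$, so $\ol\gamma^\perp$ never cuts the interior of $\ol D_K$, and $\ol D_K$ is indeed a chamber of $\AAA_x^\pi$. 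To see that every chamber arises this way, I would take a chamber $\ol C$ of $\AAA_x^\pi$, pick a generic $\ol p\in\ol C$ with a lift $\tilde p\in V$, and consider $x+t\tilde p$ for small $t>0$. Using $\sec(U_x)=\AAA_x$ together with $\alpha(x)=0$ for $\alpha\in R_x$, one checks that $x+t\tilde p$ avoids every hyperplane of $\AAA$ for small $t$ and tends to $x$, hence lies in some $K\in\KKK_x$ with $\pi(x+t\tilde p)=t\ol p\in\ol C$; as $\pi(K)\subseteq\ol D_K$ this forces $\ol C=\ol D_K$. This gives the correspondence $K\mapsto\ol D_K$ between $\KKK_x$ and the chambers of $\AAA_x^\pi$, and simultaneously shows $(\AAA_x^\pi,V_x)$ is simplicial and thin.

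Finally, $\{\ol\alpha\mid\alpha\in R_x\}$ satisfies the axioms of Definition \ref{def:rs}: $0\notin R$ gives $\ol\alpha\neq0$, the symmetry $-\alpha\in R_x$ carries over, and the arrangement just constructed is the required associated Tits arrangement. Under the identification of $R_x$ with its image in $V_x^\ast$, this is precisely the assertion that $(\AAA_x^\pi,T_x)$ is a Tits arrangement associated to $R_x$. I expect the main obstacle to be the simpliciality step, and specifically the matching of chambers of $\AAA_x^\pi$ with those in $\KKK_x$: the basis claim is clean, but the perturbation argument identifying each $\ol C$ with a unique $\ol D_K$ relies on the local-finiteness neighbourhood $U_x$ from Lemma \ref{compfin} and on a careful genericity choice, and is where the geometry of the open cone $T$ really enters.
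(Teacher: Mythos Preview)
The paper does not give its own proof of this proposition: it is quoted verbatim from \cite[Proposition 4.4]{CMW} and used as a black box, so there is no in-paper argument to compare your attempt against.

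That said, your proposal is a correct self-contained proof and follows what is the natural route. The three load-bearing steps --- (a) finiteness of $\AAA_x$ via Lemma~\ref{compfin} and hence $T_x=V_x$ because an open convex cone containing $0$ is the whole space, (b) the basis claim $\langle R_x\rangle=\langle B^K_x\rangle$ obtained by evaluating the sign-coherent expansion of Lemma~\ref{posorneg} at $x\in\ol{K}$, and (c) the perturbation $x+t\tilde p$ matching each chamber of $\AAA_x^\pi$ with some $\ol D_K$ --- are all sound. Two very minor points you could make explicit: that $\pi$ is an open map (so $T_x$ is open), and, in step (c), that the finitely many hyperplanes $H\notin\AAA_x$ meeting the chosen neighbourhood $U_x$ actually do not exist by the equality $\sec(U_x)=\AAA_x$, so only the condition $\ol\alpha(\ol p)\neq 0$ for $\alpha\in R_x$ is needed to place $x+t\tilde p$ in a chamber; this is automatic since $\ol p$ lies in a chamber of $\AAA_x^\pi$, and no extra genericity is required.
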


\begin{fol}[{\cite[Corollary 4.6]{CMW}}]
Assume $(\AAA,T)$ is a Tits arrangement associated to a root system $R$ of rank $r \geq 2$. Let $x \in \ol{T}$. Then $\AAA_x$ and $R_x$ are finite if and only if $x \in T$.
In particular, a Tits arrangement is finite if and only if it is spherical.\label{Axfin}
\end{fol}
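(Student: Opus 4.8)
The plan is to treat the two finiteness conditions as interchangeable and then prove the equivalence for $\AAA_x$, deducing the ``in particular'' clause at the end. First I would observe that $\AAA_x$ is finite if and only if $R_x$ is finite: the map $\alpha\mapsto\alpha^\perp$ sends $R_x$ onto $\AAA_x$, and its fibre over $H=\alpha^\perp$ is $R\cap\RR\alpha$, which equals $\{\pm\alpha\}$ when $R$ is reduced (the main case) and is finite in general. Hence $\AAA_x$ and $R_x$ are finite simultaneously, and it suffices to treat $\AAA_x$. For the direction $x\in T\Rightarrow\AAA_x$ finite I would simply invoke Lemma \ref{compfin}: the singleton $\{x\}$ is a compact subset of $T$, so $\sec(\{x\})=\AAA_x$ is finite.

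The converse, $\AAA_x$ finite $\Rightarrow x\in T$, is the heart of the matter, and I would prove it by producing a genuine $V$-neighbourhood of $x$ lying in $T$. Since $x\in\ol T$ is carried by some closed chamber $\ol{K_0}$, let $F$ be the face of $\SSSS$ whose relative interior contains $x$ and put $W_x=\supp_\AAA(\{x\})=\bigcap_{H\in\AAA_x}H$, a finite intersection because $\AAA_x$ is finite. Any chamber $K$ with $x\in\ol K$ is separated from $K_0$ only by walls through $x$, all of which lie in $\AAA_x$; since there are finitely many such hyperplanes, only finitely many chambers have $x$ in their closure, i.e.\ the star $\St(F)$ is finite. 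I would then argue that the finite union $\bigcup_{K\in\St(F)}\ol K$ contains an open ball around $x$: across each wall through $x$ thinness of $\SSSS$ (Proposition \ref{prop-Sproperties}) carries one to the adjacent chamber, and after finitely many steps one exhausts the chambers around $F$, so they assemble completely around $x$. On a sufficiently small ball about $x$ the only hyperplanes of $\AAA$ that appear are the walls through $x$, namely those of $\AAA_x$ (walls not containing $x$ meet $\ol K$ only in faces at positive distance from $x$); thus $\AAA$ is locally finite at $x$ and $x$ is an interior point of $T$.

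The main obstacle is exactly this last step: passing from finiteness of the hyperplanes \emph{through} $x$ to finiteness of the chambers \emph{around} $x$, and thence to local finiteness. A priori hyperplanes of $\AAA$ avoiding $x$ could accumulate at $x$, and the example $R(\tilde A_1)$ shows that this does happen at topological boundary points of the half-space $T=\gamma^+$ on $\gamma^\perp\setminus\{0\}$, where nevertheless $\AAA_x=\emptyset$. This is why $\ol T$ must be read as the union $\bigcup_{K\in\KKK}\ol K$ of closed chambers (so that $x$ is always carried by a chamber and the gated simplicial structure of $\SSSS$ applies) rather than as the plain topological closure. Granting the equivalence, the ``in particular'' clause follows by taking $x=0$: every $H\in\AAA$ is linear, so $\AAA_0=\AAA$ and $0\in\ol T$ always; hence $\AAA$ is finite iff $0\in T$, and since $T$ is an open cone this holds iff $T=\RR^r$, i.e.\ iff $(\AAA,T)$ is spherical.
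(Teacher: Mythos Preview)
The paper does not give its own proof of this statement: it is quoted verbatim as \cite[Corollary 4.6]{CMW} and used as a black box. So there is nothing in the present paper to compare your argument against; any assessment has to be on the internal merits of your proposal.

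Your treatment of the equivalence $|\AAA_x|<\infty \Leftrightarrow |R_x|<\infty$ and of the implication $x\in T\Rightarrow \AAA_x$ finite via Lemma~\ref{compfin} is correct and is exactly how one would expect the easy direction to go. The ``in particular'' clause via $x=0$ is also fine.

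Where you are right to hesitate is the converse. You have in fact put your finger on a genuine issue rather than a gap in your own reasoning: with the paper's stated convention that $\ol{T}$ is the \emph{topological} closure, the implication ``$\AAA_x$ finite $\Rightarrow x\in T$'' is false as written. Your $\tilde A_1$ example makes this explicit: the point $x=e_1-e_2$ lies in $\ol{\gamma^+}$, has $\AAA_x=\emptyset$, and yet $x\notin T$. So the corollary, read literally under the conventions of this paper, needs the hypothesis $x\in\bigcup_{K\in\KKK}\ol K$ rather than $x\in\ol T$; this is presumably how it is set up (or proved) in \cite{CMW}. Your proposed resolution --- replace $\ol T$ by the union of closed chambers --- is the correct one, and under that hypothesis your star-of-$F$ argument goes through: finiteness of $\AAA_x$ bounds the number of chambers in $\KKK_x$, thinness lets you pass to each adjacent chamber across a wall through $x$, and the resulting finite union of closed simplicial cones sharing the face $F$ is a neighbourhood of $x$ in $V$. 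The one step you leave slightly informal is that last geometric assertion (that the finitely many cones in $\St(F)$ really fill out a full neighbourhood of $x$); this is where the rank hypothesis $r\ge 2$ and the simplicial structure do the work, and it deserves a sentence of justification, e.g.\ by passing to the spherical quotient arrangement $(\AAA_x^\pi,T_x)$ and observing that a finite spherical simplicial arrangement covers all of $V_x$.
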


We recall the following statement from \cite{CMW}.

\begin{lem}[{\cite[Lemma 4.8]{CMW}}]
Let $(\AAA,T)$ be a Tits arrangement associated to a root system $R$. Let $x \in \ol{T}$ with $R_x \neq \emptyset$. Let $K,L \in \KKK_x$ be $\alpha_1$-adjacent, and $B^K = \{\alpha_1, \dots, \alpha_r\}$, $B^L = \{\beta_1, \dots, \beta_r\}$ indexed compatibly with $B^K$. Then $B^{K}_x \rightarrow B^{L}_x$, $\alpha_i \mapsto \beta_i$ is a bijection.
\label{restmap}
\end{lem}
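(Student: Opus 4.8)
The plan is to show that the compatible-indexing bijection $B^K \to B^L$, $\alpha_i \mapsto \beta_i$, carries $B^K_x = B^K \cap R_x$ onto $B^L_x = B^L \cap R_x$. Since this map is already a bijection of the full bases, it suffices to prove that for each index $i$ one has $\alpha_i \in R_x$ if and only if $\beta_i \in R_x$, i.e.\ $\alpha_i(x) = 0 \Leftrightarrow \beta_i(x) = 0$. First I would record the immediate consequences of the hypotheses: since $K, L \in \KKK_x$ we have $x \in \ol{K} \cap \ol{L} \subset \alpha_1^\perp$, so $\alpha_1(x) = 0$ and hence $\alpha_1 \in R_x$. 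Moreover Lemma \ref{adjcham15}, applied with $k = 1$, gives $\beta_1 \in \langle \alpha_1 \rangle$, so $\beta_1^\perp = \alpha_1^\perp$ and therefore $\beta_1 \in R_x$ as well. Thus the index $i = 1$ lies in both $B^K_x$ and $B^L_x$ and causes no trouble.

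For $i \neq 1$ the key input is again Lemma \ref{adjcham15}, which yields $\beta_i \in \langle \alpha_i, \alpha_1 \rangle$; write $\beta_i = a_i \alpha_i + b_i \alpha_1$ with $a_i, b_i \in \RR$. The step I expect to be the crux is verifying that the $\alpha_i$-coefficient $a_i$ is nonzero. This follows because $\{\beta_1, \dots, \beta_r\}$ is a basis of $V^\ast$ while $\beta_1 \in \RR \alpha_1$; hence $\beta_i \notin \RR \alpha_1$, and were $a_i = 0$ we would have $\beta_i = b_i \alpha_1 \in \RR \alpha_1$, a contradiction. So $a_i \neq 0$.

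It then remains a short coordinate computation. Let $C = \{v_1, \dots, v_r\}$ be the basis of $V$ dual to $B^K$, so that $\ol{K}$ is the closed simplicial cone on $C$ (Remark \ref{rem:scs}) and any $x \in \ol{K}$ can be written $x = \sum_j \lambda_j v_j$ with $\lambda_j = \alpha_j(x) \geq 0$. Since $\alpha_1(x) = 0$ we have $\lambda_1 = 0$, and therefore $\beta_i(x) = a_i \alpha_i(x) + b_i \alpha_1(x) = a_i \lambda_i$ for $i \neq 1$. As $a_i \neq 0$, this gives $\beta_i(x) = 0 \Leftrightarrow \lambda_i = 0 \Leftrightarrow \alpha_i(x) = 0$, which is exactly the equivalence $\beta_i \in R_x \Leftrightarrow \alpha_i \in R_x$. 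Combining this with the case $i = 1$ shows $\alpha_i \in B^K_x$ if and only if $\beta_i \in B^L_x$, so the injective map $\alpha_i \mapsto \beta_i$ sends $B^K_x$ into $B^L_x$ and hits every element of $B^L_x$, i.e.\ it restricts to a bijection $B^K_x \to B^L_x$, as claimed.
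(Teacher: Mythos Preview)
Your argument is correct. The paper does not supply its own proof of this lemma; it is quoted verbatim from \cite[Lemma 4.8]{CMW} and used as a black box, so there is nothing in the present paper to compare against. Your approach is the natural one: use Lemma~\ref{adjcham15} to write $\beta_i = a_i\alpha_i + b_i\alpha_1$, observe $a_i \neq 0$ from the linear independence of $B^L$ together with $\beta_1 \in \RR\alpha_1$, and then use $\alpha_1(x)=0$ to conclude $\beta_i(x)=0 \Leftrightarrow \alpha_i(x)=0$. One small remark: the detour through the dual basis $C$ is unnecessary, since the identity $\beta_i(x) = a_i\alpha_i(x) + b_i\alpha_1(x) = a_i\alpha_i(x)$ already follows directly from $\alpha_1(x)=0$ without writing $x$ in coordinates.
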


This can be applied to crystallographic arrangements.

\begin{fol}
Let $(\AAA,T)$ be a crystallographic Tits arrangement with respect to $R$. Assume the same notation as in Lemma \ref{restmap}. Then $\sigma_{K,L}|_{B^{K}_x}$ is a bijection from $B^{K}_x$ to $B^{L}_x$ mapping $\alpha_i$ to $\beta_i$.
\label{restmap2}
\end{fol}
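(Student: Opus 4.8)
The plan is to read this as an immediate upgrade of Lemma~\ref{restmap}: that lemma already supplies the abstract bijection $\alpha_i \mapsto \beta_i$ between $B^K_x$ and $B^L_x$, and all that remains is to observe that in the crystallographic setting this bijection is literally the restriction of the global linear map $\sigma_{K,L}$. Recall that $\sigma_{K,L}$ is the linear automorphism of $V^\ast$ determined by $\alpha_i \mapsto \beta_i$ for the compatibly indexed root bases $B^K = \{\alpha_1,\dots,\alpha_r\}$ and $B^L = \{\beta_1,\dots,\beta_r\}$; by Proposition~\ref{adjcham2} it is a well-defined element of $\GL_r(\ZZ)$, and in particular $\sigma_{K,L}(\alpha_i) = \beta_i$ for every $i$. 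It is precisely here that the crystallographic hypothesis enters, since it is what makes $\sigma_{K,L}$ available as an honest integral linear map rather than just a formal correspondence of bases.

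First I would unwind the definitions $B^K_x = B^K \cap R_x$ and $R_x = \{\alpha \in R \mid \alpha^\perp \in \AAA_x\}$, so that $\alpha_i \in B^K_x$ means precisely $x \in \alpha_i^\perp$, and similarly $\beta_i \in B^L_x$ means $x \in \beta_i^\perp$. This reduces the corollary to two assertions: that $\sigma_{K,L}$ carries $B^K_x$ into $B^L_x$, and that the resulting map is a bijection onto $B^L_x$.

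Both assertions are exactly the content of Lemma~\ref{restmap}, applied under the present hypotheses ($K,L$ being $\alpha_1$-adjacent with $B^L$ indexed compatibly with $B^K$): that lemma gives that $\alpha_i \mapsto \beta_i$ is a bijection $B^K_x \to B^L_x$, equivalently $\alpha_i \in B^K_x \iff \beta_i \in B^L_x$. Thus for $\alpha_i \in B^K_x$ we obtain $\sigma_{K,L}(\alpha_i) = \beta_i \in B^L_x$, so $\sigma_{K,L}|_{B^K_x}$ is well defined into $B^L_x$ and coincides, element by element, with the bijection of Lemma~\ref{restmap}. This proves the claim.

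There is no genuine obstacle to overcome; the one point requiring care is purely bookkeeping, namely to verify that the two indexing conventions agree. The bijection produced by Lemma~\ref{restmap} is built from the compatible indexing of $B^L$ relative to $B^K$, and this is the very same compatible indexing used to define $\sigma_{K,L}$ through Proposition~\ref{adjcham2}. Once this alignment is recorded, the statement follows directly, and the proof is essentially a one-line consequence of Lemma~\ref{restmap} together with the definition of $\sigma_{K,L}$.
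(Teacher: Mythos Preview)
Your argument is correct and matches the paper's treatment: the corollary is stated there without proof, as an immediate consequence of Lemma~\ref{restmap} together with the definition of the linear map $\sigma_{K,L}$ (equivalently $\varphi_{L,K}$) sending $\alpha_i$ to $\beta_i$. Your only addition is to spell out explicitly that the compatible indexing used in Lemma~\ref{restmap} coincides with the one defining $\sigma_{K,L}$, which is exactly the bookkeeping the paper leaves implicit.
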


\begin{prop}
Let $(\AAA,T)$ be a crystallographic Tits arrangement w.r.t.\ $R$ and let $x \in \ol{T}$ with $R_x \neq \emptyset$. Then
$R_x \subset \sum_{\alpha \in B^{K}_x} \ZZ \alpha$ for all $K \in K_x$.
\label{indcryst}
\end{prop}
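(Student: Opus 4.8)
The plan is to expand an arbitrary root $\beta \in R_x$ in the basis $B^K$ and to show that only the coefficients indexed by $B^K_x$ can survive. So fix $K \in \KKK_x$ and $\beta \in R_x$. Since $(\AAA,T)$ is crystallographic with respect to $R$ (Definition \ref{defcry}), there is a single sign $\varepsilon \in \{+1,-1\}$ and nonnegative integers $n_\alpha \in \NN_0$ (for $\alpha \in B^K$) with
$$\beta = \varepsilon \sum_{\alpha \in B^K} n_\alpha\, \alpha.$$
It then suffices to prove $n_\alpha = 0$ for every $\alpha \in B^K \setminus B^K_x$, for then $\beta = \varepsilon \sum_{\alpha \in B^K_x} n_\alpha \alpha \in \sum_{\alpha \in B^K_x}\ZZ\alpha$, which is exactly the claim.

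The key input is geometric. Because $K \in \KKK_x$ we have $x \in \ol{K}$, and since $K = K^{B^K}$ its closure is the closed simplicial cone $\ol{K} = \bigcap_{\alpha \in B^K}\ol{\alpha^+}$; hence $\alpha(x) \geq 0$ for all $\alpha \in B^K$. Moreover $B^K_x = B^K \cap R_x = \{\alpha \in B^K \mid x \in \alpha^\perp\} = \{\alpha \in B^K \mid \alpha(x) = 0\}$, so every $\alpha \in B^K \setminus B^K_x$ satisfies $\alpha(x) > 0$. Finally, $\beta \in R_x$ means $x \in \beta^\perp$, i.e.\ $\beta(x) = 0$.

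Evaluating the expansion at $x$ then gives $0 = \beta(x) = \varepsilon \sum_{\alpha \in B^K} n_\alpha\, \alpha(x)$, a sum in which each term $n_\alpha\, \alpha(x) \geq 0$; since the total is zero, each term vanishes, forcing $n_\alpha = 0$ whenever $\alpha(x) > 0$, that is, for all $\alpha \in B^K \setminus B^K_x$. I do not expect a genuine obstacle here: the only delicate point is that the crystallographic property supplies a \emph{single} global sign $\varepsilon$ for the whole expansion, so that all summands $n_\alpha\,\alpha(x)$ share the sign of $\varepsilon$ — which is precisely what makes the ``nonnegative terms summing to zero'' argument conclusive. The two cases $\varepsilon = \pm 1$ are handled uniformly, or, if preferred, by applying the $\varepsilon = +1$ case to $-\beta \in R_x$ and negating.
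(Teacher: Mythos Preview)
Your proof is correct and is genuinely more direct than the paper's. The paper argues combinatorially: it first shows, using Lemma~\ref{restmap} and Corollary~\ref{restmap2}, that for adjacent $K,L\in\KKK_x$ the transition $B^K_x\to B^L_x$ stays inside $\sum_{\alpha\in B^K_x}\ZZ\alpha$; then, given $\beta\in R_x$, it finds a chamber $L\in\KKK_x$ with $\beta\in B^L_x$ and propagates the integrality along a gallery in $\KKK_x$, invoking connectedness of $\KKK_x$ from \cite{CMW}. Your argument bypasses all of this by simply evaluating the crystallographic expansion at $x$: the single-sign condition forces a sum of nonnegative terms to vanish, killing every coefficient outside $B^K_x$ at once. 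Your route is shorter, avoids external input (the connectedness lemma), and makes transparent exactly which feature of ``crystallographic'' is doing the work---the uniform sign, not merely integrality. The paper's approach, on the other hand, establishes along the way the compatibility of the maps $\sigma_{K,L}$ with the local bases $B^K_x$, which is of independent use elsewhere in Section~\ref{subarr}.
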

\begin{proof}
Let $K,L \in \KKK_x$ be adjacent chambers, $B^{K} = \{\alpha_1, \dots, \alpha_r\}$ and assume that $B^{L} = \{\beta_1, \dots, \beta_r\}$ is indexed compatibly with $B^K$. Assume w.l.o.g.\ $\overline{K} \cap \overline{L} \subset \alpha_1^\perp \cap \ol{K}$ and $B^{K}_x = \{\alpha_1, \dots, \alpha_m\}$, $B^{L}_x = \{\beta_1, \dots, \beta_m\}$ for some $1 \leq m \leq r$. We know by Corollary \ref{restmap2} that the mapping $\sigma_{K,L}|_{B^{K}_x}: B^{K}_x \rightarrow B^{L}_x, \alpha_i \mapsto c_i\alpha_1 + \alpha_i$ is a bijection. In particular, we get $B^{L}_x \subset \sum_{\alpha \in B^{K}_x} \ZZ \alpha$.

Now let $\alpha \in R_x$. Since $x \in \alpha^\perp$, $x$ is contained in some simplex, and therefore also in some maximal simplex. Thus there exists a chamber $L \in \KKK_x$, such that $r \in B^{L}_x$.
Using that $\KKK_x$ is connected \cite[Lemma 2.25]{CMW}, there exists a chain $K = K_0, K_1, \dots, K_{k-1}, K_k =d$ such that $K_{i-1}, K_i$ are adjacent. It follows by induction and using the fact that $\sigma_{K_{i-1}, K_i}$ maps roots to integral linear combinations of $B^{K_{i-1}}_x$ that $B^{K_{i}}_x \subset \sum_{j=1}^m \ZZ \alpha_i$ for all $0 \leq i \leq k$.
\end{proof}

\begin{fol}
Let $(\AAA, T)$ be a crystallographic Tits arrangement with respect to $R$, $x \in \ol{T}$ such that $R_x \neq \emptyset$. Then the Tits arrangement $(\AAA_x^\pi,T_x)$ is crystallographic with respect to $R_x$.
\label{cryst:globtoloc}
\end{fol}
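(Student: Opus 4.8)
The plan is to verify the defining inclusion of Definition \ref{defcry} directly for the Tits arrangement $(\AAA_x^\pi, T_x)$, which is associated to $R_x$ by \cite[Proposition 4.4]{CMW}. Writing $\ol{K}$-chambers of this subarrangement as images $\pi(K)$ and denoting by $B^{\pi(K)}$ the root basis of $\pi(K)$, I must show
$$
R_x \subset \pm \sum_{\beta \in B^{\pi(K)}} \NN_0 \beta
$$
for every chamber of $(\AAA_x^\pi, T_x)$. Here each $\alpha \in R_x$ is regarded as a linear form on $V_x = V/W_x$, which is legitimate since by definition $\alpha$ vanishes on $W_x = \bigcap_{\gamma \in R_x} \gamma^\perp$.

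First I would reduce to the partial bases $B^K_x$. The chambers of $(\AAA_x^\pi, T_x)$ are precisely the images $\pi(K)$ of chambers $K \in \KKK_x$, and for such a $K$ the root basis of $\pi(K)$ is exactly $B^K_x = B^K \cap R_x$, viewed in $(V_x)^\ast$: the walls of $\pi(K)$ are the images of those walls of $K$ lying in $\AAA_x$, and the positivity condition defining the root basis is unchanged by $\pi$, because each $\alpha \in B^K_x$ factors through $\pi$ and hence has the same sign on $K$ as its image has on $\pi(K)$. Thus it suffices to prove
$$
R_x \subset \pm \sum_{\alpha \in B^K_x} \NN_0 \alpha \qquad \text{for all } K \in \KKK_x.
$$

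The key step then combines two facts already available. By Lemma \ref{posorneg}, applied to the Tits arrangement $(\AAA_x^\pi, T_x)$ and the chamber $\pi(K)$, every $\alpha \in R_x$ is a non-negative or a non-positive \emph{real} linear combination of $B^K_x$. On the other hand, Proposition \ref{indcryst} gives $R_x \subset \sum_{\alpha \in B^K_x} \ZZ\alpha$, so these coefficients are integers. Since $B^K_x$ is a basis of $(V_x)^\ast$, the coordinates of $\alpha$ with respect to $B^K_x$ are unique; hence they are simultaneously integral and of one sign, i.e.\ they all lie in $\NN_0$ or all in $-\NN_0$. This is exactly the required inclusion, and as it holds for every $K \in \KKK_x$, the arrangement $(\AAA_x^\pi, T_x)$ is crystallographic with respect to $R_x$.

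The main obstacle I anticipate is the bookkeeping in the first reduction rather than the final combination: one has to be careful that $B^K_x$ really is a basis of $(V_x)^\ast$ (so that coordinates are unique and Lemma \ref{posorneg} and Proposition \ref{indcryst} refer to the same coordinate system) and that it is literally the root basis of $\pi(K)$, including its orientation. Both points are guaranteed by the description of parabolic subarrangements in \cite[Proposition 4.4]{CMW}; once this identification is fixed, the marriage of Lemma \ref{posorneg} and Proposition \ref{indcryst} is immediate.
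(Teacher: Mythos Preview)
Your proof is correct and follows the same approach as the paper's own argument, only with the details spelled out. The paper's one-line proof simply says the claim is a direct consequence of Proposition \ref{indcryst}; your version makes explicit the two ingredients that are implicit there, namely that the root bases of the chambers of $(\AAA_x^\pi,T_x)$ are exactly the sets $B^K_x$, and that the sign condition (all coefficients non-negative or all non-positive) is already guaranteed by Lemma \ref{posorneg} applied to the subarrangement, so that Proposition \ref{indcryst} upgrades the real coefficients to integers.
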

\begin{proof}
This is a direct consequence from Proposition \ref{indcryst}, since the crystallographic property only depends on $R_x$.
\end{proof}

\begin{fol}
Let $(\AAA, T)$ be a crystallographic Tits arrangement with respect to the reduced root system $R$, $x \in \ol{T}$ such that $R_x \neq \emptyset$. Let $K \in \KKK_x$, $B^K_x = \{\alpha_i \mid i \in J\}$ and $\Cc = \Cc(\AAA, T,R)$. Then $\Cc(\AAA_x^\pi,T_x,R_x) = \Cc_J^K$. Likewise, if $J \subset I$, $a \in A$, $B^a = \{\alpha_i \mid i \in I\}$, we find $x \in \ol{T}$ such that $x \in \alpha_i^\perp$ if and only if $i \in J$. In this case $\Cc_J^a = \Cc(\AAA_x^\pi, T_x, R_x)$.

In other words, parabolic subarrangements of crystallographic arrangements correspond to residues of the respective Cartan graph.
\end{fol}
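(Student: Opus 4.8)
The plan is to establish the first assertion directly and then deduce the second by exhibiting a suitable point $x$. Throughout, Corollary \ref{cryst:globtoloc} guarantees that $(\AAA_x^\pi, T_x)$ is a crystallographic Tits arrangement with respect to $R_x$, so $\Cc(\AAA_x^\pi, T_x, R_x)$ is defined, and I read the claimed equality $\Cc(\AAA_x^\pi, T_x, R_x) = \Cc_J^K$ as an identity of Cartan graphs after the two canonical identifications: of index sets $I_x \cong J$ via $B^K_x = \{\alpha_i \mid i \in J\}$, and of object sets via $L \mapsto \pi_x(L)$, which identifies $\KKK_x$ with the chambers $\KKK(\AAA_x^\pi, T_x)$ of the localisation.

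First I would match the object sets by proving $\Pi_J(K) = \KKK_x$. For $i \in J$ the wall $\alpha_i^\perp$ contains $x$, so crossing it from a chamber whose closure contains $x$ produces another such chamber; hence $\rho_i(\KKK_x) \subseteq \KKK_x$ for $i \in J$, and $\Pi_J(K) \subseteq \KKK_x$ since $K \in \KKK_x$. Conversely, $\KKK_x$ is connected (\cite[Lemma 2.25]{CMW}), and any gallery inside $\KKK_x$ from $K$ to $L$ crosses only walls through $x$: two consecutive chambers both contain $x$ in their closures, hence $x$ lies in the span of their common wall. By the compatibility of the indexing together with Lemma \ref{restmap} and Corollary \ref{restmap2}, the walls through $x$ carry types lying in $J$ consistently along $\KKK_x$, so every step of such a gallery is a $\rho_i$ with $i \in J$; thus $L \in \Pi_J(K)$. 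Under $L \mapsto \pi_x(L)$ this set is exactly the object set of $\Cc(\AAA_x^\pi, T_x, R_x)$, and the maps $\rho_i$ ($i \in J$) agree on both sides by construction.

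It then remains to compare the Cartan matrices, which is the clean core of the argument. Fix $L \in \KKK_x$ with compatibly indexed basis $B^L = \{\gamma_1, \dots, \gamma_r\}$, so that $B^L_x = \{\gamma_i \mid i \in J\}$, and let $i \in J$ with $i$-adjacent chamber $L^i$ and compatibly indexed $B^{L^i} = \{\delta_1, \dots, \delta_r\}$, so $\delta_j = c_j^i \gamma_i + \gamma_j$ by Proposition \ref{adjcham2}. For $i,j \in J$ one has $x \in \gamma_i^\perp \cap \gamma_j^\perp$, whence $\delta_j(x) = c_j^i \gamma_i(x) + \gamma_j(x) = 0$, so $\delta_j \in R_x$; thus the entire relation $\delta_j = c_j^i \gamma_i + \gamma_j$ takes place among roots of $R_x$. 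Identifying $R_x$ with its image in $(V_x)^\ast \cong W_x^\circ = \langle R_x \rangle$, this relation persists verbatim under $\pi_x$ with the same coefficient $c_j^i$, so the $(i,j)$-entry of the Cartan matrix of $\Cc(\AAA_x^\pi, T_x, R_x)$ at $\pi_x(L)$ equals $-c_j^i$, which is precisely the $(i,j)$-entry of $C_J^L = (c_{ij}^L)_{i,j\in J}$. Hence the two Cartan graphs coincide.

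For the converse statement, given $J \subseteq I$ and $a = K$ with $B^a = \{\alpha_i \mid i \in I\}$, I would take $x$ in the relatively open face $F = \{y \in \ol{K} \mid \alpha_i(y) = 0 \text{ for } i \in J,\ \alpha_i(y) > 0 \text{ for } i \notin J\}$, which is nonempty. For such $x$ and any root $\alpha = \sum_i n_i \alpha_i \in R$, the crystallographic property together with Lemma \ref{posorneg} forces all $n_i \geq 0$ or all $n_i \leq 0$; since $\alpha_i(x) = 0$ for $i \in J$ and $\alpha_i(x) > 0$ for $i \notin J$, the equality $\alpha(x) = 0$ forces $n_i = 0$ for every $i \notin J$. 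Thus $R_x = R \cap \pm \sum_{i \in J} \NN_0 \alpha_i$ is the same for all $x \in F$, with $B^K_x = \{\alpha_i \mid i \in J\}$ and $R_x \neq \emptyset$ whenever $J \neq \emptyset$; applying the first part to this $x$ and $K = a$ gives $\Cc_J^a = \Cc(\AAA_x^\pi, T_x, R_x)$. I expect the main obstacle to be the object-set identification $\Pi_J(K) = \KKK_x$ and the attendant bookkeeping that walls through $x$ consistently carry types in $J$; the Cartan-matrix comparison is then immediate from the observation that the defining relations of the coefficients already live inside $R_x$.
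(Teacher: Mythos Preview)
Your argument is correct and, in fact, considerably more explicit than the paper's own proof, which dispatches the corollary in a single sentence by invoking the correspondence of Corollary~\ref{cor:cor}. The paper's route treats the claim as a formal consequence of the bijection between crystallographic Tits arrangements and connected simply connected Cartan graphs (together with Proposition~\ref{char:res}, which handles the residue side of the picture in the geometric-realisation direction); the identification of the parabolic with the $J$-residue then falls out because both constructions are shown to be mutually inverse. You instead bypass the full correspondence and verify the equality of Cartan graphs by hand: matching the object sets via $\Pi_J(K)=\KKK_x$ using connectedness of $\KKK_x$ and Lemma~\ref{restmap}, and matching the Cartan entries by observing that the defining relations $\delta_j=c_j^i\gamma_i+\gamma_j$ for $i,j\in J$ already lie in $R_x$ and hence descend to $V_x$ with the same coefficients. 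Your approach is more elementary and self-contained---it only needs the arrangement-to-Cartan-graph direction of Section~\ref{sec_cryprop} and the parabolic material from \cite{CMW}---whereas the paper's one-line proof trades this transparency for brevity by leaning on the whole of Section~\ref{geom.real}. The explicit construction of $x$ in the relatively open face of $\ol{K}$ of type $I\setminus J$, and the verification that $B^K_x=\{\alpha_i\mid i\in J\}$ via the sign pattern forced by Lemma~\ref{posorneg}, are exactly right.
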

\begin{proof}
The corollary is immediate from the correspondence of Cartan graphs and Tits arrangements in Corollary \ref{cor:cor}.
\end{proof}

\begin{prop}
Let $(\AAA,T)$ be a Tits arrangement associated to $R$ of rank $r \neq 2$. If for all $0 \neq x \in \ol{T}$ the arrangements $(\AAA_x^\pi,T_x)$ are crystallographic with respect to $R_x$, then $(\AAA,T)$ is crystallographic with respect to $R$.

If $(\AAA,T)$ is locally spherical, then $(\AAA,T)$ is crystallographic with respect to $R$ if for all $0\neq x \in T$ the arrangements $(\AAA_x^\pi,T_x)$ are crystallographic with respect to $R_x$.
\label{cryst:loctoglob}
\end{prop}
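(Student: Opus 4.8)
The plan is to reduce global crystallographicity to an integrality statement for adjacent chambers and then transport it along galleries. By Lemma \ref{posorneg} every root is already a non-negative or non-positive \emph{real} combination of any root basis $B^K$, so the only thing at stake is the integrality of the coefficients. The decisive local step I would prove is: for adjacent chambers $K,L$, every element of $B^L$ is an integral combination of $B^K$. Granting this, the conclusion follows in two moves. First, integral transition matrices compose, so induction along a gallery shows that $B^L$ is integral over $B^K$ for \emph{every} chamber $L$; the galleries are finite because $T$ is a convex open cone, hence for $x\in K$, $y\in L$ the segment $[x,y]$ is a compact subset of $T$ and meets only finitely many hyperplanes by Lemma \ref{compfin}. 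Second, every $H\in\AAA$ is a wall of some chamber: since $H\cap T\neq\emptyset$ and $\AAA$ is locally finite in $T$, a point of $H\cap T$ lying on no other hyperplane is in the closure of two chambers having $H$ as a wall; consequently every root $\beta$ satisfies $\pm\beta\in B^L$ for some $L$. Combining the two, $\beta$ is an integral combination of $B^K$, and with Lemma \ref{posorneg} this gives $R\subset\pm\sum_{\alpha\in B^K}\NN_0\alpha$. As $K$ was arbitrary, $(\AAA,T)$ is crystallographic.

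For the local step fix adjacent $K,L$ with $\ol K\cap\ol L\subset\alpha_k^\perp$, write $B^K=\{\alpha_1,\dots,\alpha_r\}$ and index $B^L=\{\beta_1,\dots,\beta_r\}$ compatibly. Then $\beta_k=-\alpha_k$, and by Lemma \ref{adjcham15} we have $\beta_i\in\langle\alpha_i,\alpha_k\rangle$ for $i\neq k$, say $\beta_i=a_i\alpha_i+b_i\alpha_k$. Let $\{v_1,\dots,v_r\}$ be the basis of $V$ dual to $B^K$, so $\alpha_j(v_m)=\delta_{jm}$. Since the localisation below requires an index $m\neq i,k$, we assume $r\geq 3$ (the rank-one case is immediate); for each $i\neq k$ the face $\ol K\cap\alpha_i^\perp\cap\alpha_k^\perp$ is then the nonzero cone on the rays $\RR_{\geq0}v_m$, $m\neq i,k$. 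Choosing a nonzero $x$ on this face we get $\alpha_i,\alpha_k\in R_x$, hence $\{\alpha_i,\alpha_k\}\subseteq B^K_x$, and $\beta_i(x)=a_i\alpha_i(x)+b_i\alpha_k(x)=0$ gives $\beta_i\in R_x$. The localisation $(\AAA_x^\pi,T_x)$ is crystallographic with respect to $R_x$ by hypothesis, so at the chamber $\pi(K)$ the root $\beta_i$ lies in $\pm\sum_{\alpha\in B^K_x}\NN_0\alpha$; as $B^K_x$ is a basis of the local dual space and the expansion of $\beta_i$ in it is $a_i\alpha_i+b_i\alpha_k$, we conclude $a_i,b_i\in\ZZ$, so $B^L$ is integral over $B^K$. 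The two forms of the hypothesis enter only in the choice of $x$: for the first statement any such $x$ lies in $\ol T$, since it is a point of a face of $\ol K$; for the second statement I take $x$ on a vertex ray $\RR_{\geq0}v_m$ with $m\neq i,k$, which is a codim-$(r-1)$ simplex of $\SSSS$, whence local sphericity forces it to meet $T$ and $x$ may be chosen in $T$, so the weaker assumption on $0\neq x\in T$ suffices. This choice also shows why rank $2$ is excluded: there is no index $m\neq i,k$, i.e.\ no nonzero localisation point carrying both $\alpha_i$ and $\alpha_k$.

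The main obstacle is that a root $\beta$ with full support relative to $B^K$ (all coefficients strictly positive) cannot be detected by localising at a nonzero point where it vanishes, since there $\ol K\cap\beta^\perp=\{0\}$. The device that avoids this is to localise only in order to control the one-step transition from $B^K$ to an adjacent $B^L$: Lemma \ref{adjcham15} confines each new simple root $\beta_i$ to a plane $\langle\alpha_i,\alpha_k\rangle$ that genuinely vanishes on an $(r-2)$-dimensional face, so that even full-support roots are reached, not directly, but globally through the gallery argument together with the fact that every root is simple for some chamber.
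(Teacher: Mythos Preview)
Your proof is correct and follows the same overall architecture as the paper's: reduce to showing $B^L\subset\sum_{\alpha\in B^K}\ZZ\alpha$ for adjacent chambers, then propagate along galleries using connectedness of $\SSSS$.

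The only difference is in how the adjacent-chamber step is handled. You invoke Lemma~\ref{adjcham15} to pin each $\beta_i$ into the plane $\langle\alpha_i,\alpha_k\rangle$ and then localise at a single point of the codimension-two face $\ol K\cap\alpha_i^\perp\cap\alpha_k^\perp$ (a vertex in the locally spherical case). The paper instead picks two distinct vertices $v_1,v_2$ of $\ol K$ lying in the common wall $\alpha_k^\perp$ and observes, from the simplicial structure of $\ol L$, that every $\beta_i$ vanishes at one of them; hence $B^L\subset B^L_{v_1}\cup B^L_{v_2}$, and the local crystallographic hypothesis at $v_1,v_2$ gives integrality directly, without appealing to Lemma~\ref{adjcham15}. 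The paper's version is marginally more economical; yours makes the coefficients of each $\beta_i$ explicit. Both localise at vertices for the locally spherical statement, which is why that hypothesis suffices there, and both need $r\ge 3$ to have the required nonzero localisation points---exactly the reason rank~$2$ is excluded, as Example~\ref{cryst:exmp1} confirms.
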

\begin{proof}
If $r=1$ we can conclude that $T = \RR$ or $T  = \RR_{>0}$. In the first case the root system $R$ is 1-dimensional and crystallographic. In the second case there is no thin hyperplane arrangement, as every hyperplane is just $\{0\}$ and this does not intersect $\RR_{>0}$. Thus assume $r\geq 3$.

We know $\SSSS$ is connected by Proposition \ref{prop-Sproperties}, so it is enough to show that for two adjacent chambers $K,L \in \KKK$ we have $B^L \subset \sum_{\alpha \in B^K} \ZZ \alpha$. The proposition follows then by induction on the length of a minimal gallery between arbitrary chambers $K', L' \in \KKK$.

Assume further that $B^K = \{\alpha_1, \dots, \alpha_r\}$, $B^L = \{\beta_1, \dots, \beta_r\}$ are indexed compatibly with $B^K$ and $K,L$ are adjacent in $\alpha_1$. Take two distinct vertices $v_1, v_2$ of $\ol{K}$ such that $v_1, v_2 \in \alpha_1^\perp$. Note that if $(\AAA,T,R)$ is locally spherical, these vertices are contained in $T$. As $\ol{L}$ is a simplicial cone, there exists a unique maximal face not containing $v_1$, which must contain $v_2$. Therefore by Proposition~\ref{indcryst}
$$B^L \subset B^{L}_{v_1} \cup B^{L}_{v_2} \subset \sum_{\alpha \in B^{K}_{v_1}} \ZZ \alpha \cup \sum_{\alpha \in B^{K}_{v_2}} \ZZ \alpha \subset \sum_{\alpha \in B^K}\ZZ \alpha$$
and we are done.
\end{proof}

\begin{exmp}
\label{cryst:exmp1}
The requirement $r\neq 2$ in Proposition \ref{cryst:loctoglob} is actually necessary. Take the root system of type $\tilde{A_1}$, $R = \{e_i^\vee +k \gamma \mid i=1,2, k \in \ZZ \}$, where the $e_i^\vee$ are the standard base vectors and $\gamma = e_1^\vee + e_2^\vee$. Denote $v_\lambda = \lambda e_1 + (1-\lambda)e_2$ and take $\{v_\lambda \mid \lambda \in \ZZ\}$ as a vertex set, which is a lattice in the affine space $W = \{v \in \RR^2 \mid  \gamma(v) = 1\}$. An open simplex $s_\lambda$ is just the open convex hull of $v_{\lambda}$ and $v_{\lambda+1}$, and the chambers $K_\lambda$ are the respective cones $\RR_{>0}s_\lambda$ in $T = \gamma^+$.

Given the chambers as above, there are bases $B^{\lambda} := B^{K_\lambda}$ as $B^0 = \{e_1^\vee, e_2^\vee\}$ and 
\begin{align*}
B^n &= \{e_2^\vee+n\gamma, -(e_2^\vee+(n-1)\gamma)\},\\
B^{-n} &=\{e_1^\vee+n\gamma, -(e_1^\vee+(n-1)\gamma)\}
\end{align*}
where $n\in \NN$. Note that the order in which the elements of the sets are written down does not give a compatible indexing with each other. Now for $n\in \NN, m \in \NN_0$ the arrangements at the vertices are given by the roots
\begin{align*}R_{v_n} &= \{\pm (e_2^\vee + (n-1)\gamma)\},\\
R_{-m} &= \{\pm (e_1^\vee + m\gamma\}.
\end{align*}
The root system $R$ itself is crystallographic as well as the arrangements at the points $v_\lambda$. Also note that $R = \bigcup_{\lambda \in \ZZ} B^\lambda$.
One can modify $R$ by defining the root system
\begin{align*}
\tilde{R} = \{(k+1)(e_i^\vee+k\gamma)\mid i = 1,2, k \in \ZZ\}.
\end{align*}
Since $\tilde{R}$ only contains multiples of the elements in $R$, the arrangement induced by $R$ and by $\tilde{R}$ is actually the same, so we find the same set of chambers $K_\lambda$, which are induced by the same simplices on the same vertex set. Then the root system at the vertices are
\begin{align*}
\tilde{R}_{v_n} &= \{\pm n(e_2^\vee + (n-1)\gamma)\},\\
\tilde{R}_{v_{-m}} &= \{\pm (m+1)(e_1^\vee + m\gamma\}.
\end{align*}
This implies that for every point in $T$ the induced arrangement is crystallographic. Note that every point in $T$ different from the $v_\lambda$ is either a multiple of some $v_\lambda$ and therefore induces the same arrangement or is in the interior of a simplicial cone and induces the empty arrangement.

Now consider the root bases with respect to $\tilde{R}$, which we shall call $\tilde{B}$ to distinguish them from the sets $B^{\lambda}$. These are of the form
\begin{align*}
\tilde{B}^n &= \{(n+1)(e_2^\vee+n\gamma), -n(e_2^\vee+(n-1)\gamma)\},\\
\tilde{B}^{-n} &=\{e_1^\vee+n\gamma, -(e_1^\vee+(n-1)\gamma)\}.
\end{align*}
So $\tilde{B}^0 = \{e_1^\vee, e_2^\vee\}$, $\tilde{B}^1 = \{2e_1^\vee + 4e_2^\vee, -e_2^\vee\}$. Now $e_1^\vee = \frac{1}{2}(2e_1^\vee + 4e_2^\vee) +2 (-e_2^\vee)$, so $\tilde{R}$ does not satisfy the crystallographic property.
\end{exmp}

\begin{rem}
\begin{enumerate}
\item
We assume $x \neq 0$ in Proposition \ref{cryst:loctoglob} since $\AAA_0 = \AAA$.
\item For the proof of Proposition \ref{cryst:loctoglob} it is actually sufficient to assume the crystallographic property for all parabolic subarrangements $(\AAA_x,T_x)$ where $0\neq x$ is contained in some vertex $v$ in the simplicial complex $\SSSS$. It is also not hard to see that being crystallographic in such a point implies $R_y$ being crystallographic for all $y$ such that the minimal simplex $F_y$ containing $y$ is contained in $St(v)$. Thus $R_y$ is crystallographic for all $y \in \ol{T}$.
\end{enumerate}
\end{rem}

\subsection{Restrictions}

\begin{defi}
Let $(\AAA,T)$ be a Tits arrangement associated to $R$, $H \in \AAA$. Define
\begin{align*}
\AAA^H &:= \{H' \cap H \leq H\ |\ H' \in \AAA \setminus \{H\}, H' \cap H \cap T \neq \emptyset\},\\
\pi_H^\ast&: V^\ast \to H^\ast,\:\: \alpha \mapsto \alpha|_H,\\
R^H &:= \pi_H^\ast(R) \setminus (\{0\} \cup \{\alpha \in \pi_H^\ast(R) \mid \alpha^\perp \cap H \cap T = \emptyset\}).
\end{align*}
\end{defi}

The following are the main results from \cite{CMW} on restrictions of simplicial arrangements.

\begin{prop}[{\cite[Proposition 4.16]{CMW}}]\label{indarrange}
Let $(\AAA,T)$ be a $k$-spherical Tits arrangement for $k \geq 1$. Let $R$ be a root system associated to $(\AAA,T)$ and $H \in \AAA$. Then $(\AAA^H,T \cap H)$ is a $k-1$-spherical simplicial arrangement. If $(\AAA^H, T \cap H)$ is thin, it is a Tits arrangement associated to $R^H$.
\end{prop}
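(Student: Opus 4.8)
The plan is to carry out the whole argument inside the hyperplane $H\cong\RR^{r-1}$ and to realise the chambers of $(\AAA^H,T\cap H)$ as the relative interiors of the facets $\ol{K}\cap H$ of those chambers $K\in\KKK$ having $H$ as a wall. First I would record the easy structural facts. The set $T\cap H$ is an open convex cone in $H$ (intersection of the open convex cone $T$ with a subspace, nonempty because $H\cap T\neq\emptyset$), each element $H'\cap H$ of $\AAA^H$ is a genuine linear hyperplane of $H$ since $H'\neq H$, and $\AAA^H$ is locally finite in $T\cap H$: a neighbourhood $U_x\subset T$ witnessing local finiteness of $\AAA$ at $x\in T\cap H$ gives the neighbourhood $U_x\cap H$ in $H$, which meets only those $H'\cap H$ with $H'$ among the finitely many hyperplanes meeting $U_x$. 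Together with $H'\cap H\cap(T\cap H)\neq\emptyset$ by definition, this shows that $(\AAA^H,T\cap H)$ is a hyperplane arrangement of rank $r-1$ (I assume $r\geq 2$, so that $\AAA^H$ has rank $\geq 1$ and Corollary \ref{Axfin} applies).

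The main step is the identification of chambers. I would first observe that a point $x\in T\cap H$ lies on no member of $\AAA^H$ precisely when $\AAA_x=\{H\}$: if $x\in H'$ for some $H'\in\AAA\setminus\{H\}$, then $x\in H'\cap H\cap T$, so $H'\cap H\in\AAA^H$ passes through $x$. The crux is then the claim that for $K\in\KKK$ with $H\in W^K$, writing $B^K=\{\alpha_1,\dots,\alpha_r\}$ and $H=\alpha_1^\perp$, the relative interior $F^\circ=\{x\in\ol{K}\mid \alpha_1(x)=0,\ \alpha_i(x)>0\ (i\geq 2)\}$ of the facet $F=\ol{K}\cap H$ satisfies $\AAA_x=\{H\}$ for all $x\in F^\circ$. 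To see this, suppose $x\in H'=\beta^\perp$ with $H'\neq H$; since $K$ is a chamber, $\ol{K}$ lies in one closed halfspace of $H'$, say $\beta\geq 0$ on $\ol{K}$, so $\beta\geq 0$ on $F$ with $\beta(x)=0$ at the relatively interior point $x$, which forces $\beta|_H\equiv 0$, i.e.\ $H\subseteq H'$ and hence $H=H'$, a contradiction. Consequently $\AAA_x=\{H\}$ is finite, so $x\in T$ by Corollary \ref{Axfin}; thus $F^\circ\subseteq T\cap H$ and $F^\circ$ consists of chamber points. Conversely, for any chamber point $x$ the finite local arrangement $\AAA_x=\{H\}$ places $x$ in the relative interior of the common facet of the two chambers adjacent across $H$. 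Since the relative boundary (in $H$) of $F^\circ$ lies on the hyperplanes $\alpha_i^\perp\in\AAA$ ($i\geq 2$), it contains no chamber point, so $F^\circ$ is relatively clopen among chamber points and hence is a full connected component, i.e.\ a chamber of $\AAA^H$. Finally, as $H=\alpha_1^\perp$, the restrictions $\alpha_2|_H,\dots,\alpha_r|_H$ form a basis of $H^\ast$ and $F^\circ=\bigcap_{i\geq 2}(\alpha_i|_H)^+$ is an open simplicial cone in $H$. This proves that $(\AAA^H,T\cap H)$ is simplicial.

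For the sphericity statement I would set up the order-preserving correspondence between simplices: a face $\ol{K}\cap\bigcap_{i\in J}\alpha_i^\perp$ of $\ol{K}$ is contained in $H$ iff $1\in J$, and in that case it is exactly a simplex of $\SSSS(\AAA^H)$ with index set $J\setminus\{1\}$; under this correspondence $\codim_H(S)=\codim_V(S)-1$. Thus a codimension-$(k-1)$ simplex of $\SSSS(\AAA^H)$ is a codimension-$k$ simplex of $\SSSS(\AAA)$ lying in $H$, which meets $T$ by $k$-sphericity of $(\AAA,T)$ and therefore meets $T\cap H$; this is exactly $(k-1)$-sphericity of $(\AAA^H,T\cap H)$.

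It remains to match roots and hyperplanes. Writing each $H'\in\AAA$ as $\alpha^\perp$ with $\alpha\in R$, one has $H'\neq H$ iff $\alpha|_H\neq 0$, and then $H'\cap H=(\alpha|_H)^\perp$ inside $H$; unwinding the definition of $R^H=\pi_H^\ast(R)$ with $\{0\}$ and the roots whose kernel misses $T\cap H$ removed then yields $\AAA^H=\{\beta^\perp\mid\beta\in R^H\}$. Since $R=-R$ and the discarded set is symmetric, $R^H=-R^H$ and $0\notin R^H$, so axioms 1) and 2) of a root system hold. If moreover $(\AAA^H,T\cap H)$ is thin, then by the above it is a thin simplicial, hence Tits, arrangement, which simultaneously verifies axiom 3) for $R^H$ and exhibits $(\AAA^H,T\cap H)$ as associated to $R^H$. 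The main obstacle is the chamber identification of the second paragraph: the convexity argument that no further hyperplane of $\AAA$ passes through the relative interior of a facet, and the use of Corollary \ref{Axfin} to place these facets inside $T$. The remaining bookkeeping on codimensions and on the definition of $R^H$ is routine.
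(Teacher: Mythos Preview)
The paper does not give its own proof of this proposition: it is quoted verbatim from \cite[Proposition~4.16]{CMW} and stated here without argument, so there is nothing in the present paper to compare your proposal against.

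That said, your proof is correct and follows what is the natural line of attack. The key step---showing that for $K\in\KKK$ with $H\in W^K$ the relative interior $F^\circ$ of the facet $\ol{K}\cap H$ meets no hyperplane of $\AAA$ other than $H$, and then invoking Corollary~\ref{Axfin} to conclude $F^\circ\subset T\cap H$---is exactly the right idea, and your convexity argument (a linear form nonnegative on $\ol{K}$ and vanishing at a relatively interior point of a facet must vanish on the whole facet, hence on $H$) is clean. The identification of $\KKK(\AAA^H)$ with the set of such $F^\circ$ then follows as you describe, and the simpliciality is immediate from the fact that $\alpha_2|_H,\dots,\alpha_r|_H$ form a basis of $H^\ast$. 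The codimension bookkeeping for the sphericity statement and the verification of the root-system axioms for $R^H$ are routine and correctly sketched.

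One minor remark: in your argument the hypothesis $k\geq 1$ plays no role in establishing that $(\AAA^H,T\cap H)$ is simplicial, since you obtain $F^\circ\subset T$ directly from Corollary~\ref{Axfin} rather than from $1$-sphericality; the assumption is only needed so that the conclusion ``$(k-1)$-spherical'' makes sense. Also, your appeal to Corollary~\ref{Axfin} silently uses that $x\in\ol{T}$, which holds because $x\in\ol{K}\subset\ol{T}$; it may be worth making this explicit.
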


\begin{fol}[{\cite[Corollary 4.17]{CMW}}]\label{projbasis}
Assume that in the setting of Proposition \ref{indarrange} the pair $(\AAA^H, T \cap H)$ is a Tits arrangement and $R^H$ a root system associated to $(\AAA^H, T \cap H)$. Let $K \in \KKK$ such that $H \in W^H$. Then the set $\pi_H^\ast(B^K) \setminus\{0\}$ is a basis of $H^\ast$.
\end{fol}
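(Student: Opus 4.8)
The statement reduces to elementary linear algebra once the right objects are identified, so the plan is to exploit the structure of the restriction map $\pi_H^\ast$ together with the fact that $B^K$ is a basis of $V^\ast$. Recall from the remark following the definition of the root basis that $B^K$ consists of exactly $r$ elements and is a basis of $V^\ast$, and that $W^K = \{\alpha^\perp \mid \alpha \in B^K\}$. Since $H$ is a wall of $K$ (reading $H \in W^K$), there is therefore a unique $\alpha_0 \in B^K$ with $\alpha_0^\perp = H$; for this element $\pi_H^\ast(\alpha_0) = \alpha_0|_H = 0$.

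Next I would analyse $\pi_H^\ast \colon V^\ast \to H^\ast$, $\alpha \mapsto \alpha|_H$. This map is surjective, and its kernel is the annihilator of $H$, i.e.\ the space of linear forms vanishing on $H$. As $H$ is a hyperplane this kernel is one-dimensional, and since $\alpha_0$ is a nonzero element of it we obtain $\ker \pi_H^\ast = \RR\alpha_0$. Because $B^K$ is a basis of $V^\ast$, the set $B^K \setminus \{\alpha_0\}$ spans a complement of $\ker\pi_H^\ast$ in $V^\ast$, so by rank--nullity $\pi_H^\ast$ restricts to a linear isomorphism from $\langle B^K \setminus \{\alpha_0\}\rangle$ onto $H^\ast$.

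It remains to collect the conclusion. The isomorphism just obtained shows that $\pi_H^\ast(B^K \setminus \{\alpha_0\})$ is a basis of $H^\ast$, being the image of a basis of the complement under an isomorphism; in particular these $r-1$ images are pairwise distinct and all nonzero. Since $\alpha_0$ is the only element of $B^K$ lying in $\ker\pi_H^\ast$, no other basis vector maps to $0$, and hence $\pi_H^\ast(B^K)\setminus\{0\} = \pi_H^\ast(B^K \setminus \{\alpha_0\})$, which is exactly the desired basis of $H^\ast$.

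I expect no serious obstacle here: the only points requiring a little care are verifying that the kernel of $\pi_H^\ast$ is precisely $\RR\alpha_0$, so that exactly one basis vector is annihilated, and confirming that the remaining $r-1$ images are genuinely distinct and nonzero, so that removing $0$ does not accidentally shrink the set below $\dim H^\ast = r-1$. Both of these follow immediately from the injectivity of $\pi_H^\ast$ on the complement $\langle B^K \setminus \{\alpha_0\}\rangle$, which is where I would concentrate the (minimal) verification.
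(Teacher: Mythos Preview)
Your argument is correct: it is pure linear algebra once one identifies $\ker\pi_H^\ast=\RR\alpha_0$, and the remaining $r-1$ basis vectors map bijectively onto a basis of $H^\ast$. Note that the paper itself does not supply a proof here---the result is quoted as \cite[Corollary 4.17]{CMW}---so there is nothing to compare against in this source; your self-contained verification is entirely adequate, and in fact does not even require the hypothesis that $(\AAA^H,T\cap H)$ be a Tits arrangement.
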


\begin{defi}
The pair $(\AAA^H, T \cap H)$ is called the \textit{arrangement induced by $H$} or the \textit{restriction of $\AAA$ to $H$}.
\end{defi}

\begin{rem}
Given a Tits arrangement $(\AAA,T)$ associated to $R$, in general it seems that properties of $R$ are hard to transfer to restrictions. This is suggested by the root systems of $F_4$ or $\tilde{F_4}$ in the example below. Here we start with the strongest properties we have, i.e.\ a crystallographic reduced root system associated to a Weyl group. However, inducing on reflection hyperplanes yields in some cases only a root system associated to a non-standard Cartan graph, which is not reduced anymore. Nevertheless, the crystallographic property is inherited. We will dedicate the rest of this section to show that this is always the case for restrictions.
\end{rem}

From now on, assume that $(\AAA,T)$ is a crystallographic Tits arrangement associated to $R$. In this case, we can state a stronger version of the following lemma in \cite{CMW}.

\begin{lem}[{\cite[Lemma 4.15]{CMW}}]\label{indbases}
Let $(\AAA,T)$ be a Tits arrangement associated to $R$, $K \in \KKK$, $H \in W^K$. Let $B := \pi_H^\ast(B^K) \setminus \{0\}$. Then
\begin{enumerate}[label=\roman*)]
	\item $H \cap \overline{K}= \ol{K'}$ for a unique $K' \in \KKK^H$,
	\item $\langle \ol{K'} \cap \alpha^\perp \rangle = \alpha^\perp$  and $\alpha^\perp \cap K' = \emptyset$ for $\alpha \in B$.
	\item $K' = \{x \in H \mid \alpha(x) > 0 \text{ for all } \alpha \in B\}$.
\end{enumerate}
\end{lem}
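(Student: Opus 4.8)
The plan is to make part iii) the backbone and read off i) and ii) from it. First I would index $B^K = \{\alpha_1, \dots, \alpha_r\}$ so that $\alpha_1^\perp = H$. Since $\pi_H^\ast$ is the restriction map $V^\ast \to H^\ast$, its kernel is the line of forms vanishing on $H$, namely $\RR\alpha_1$; as $B^K$ is a basis of $V^\ast$ this shows at once that $B = \pi_H^\ast(B^K)\setminus\{0\} = \{\alpha_2|_H, \dots, \alpha_r|_H\}$ is a basis of $H^\ast$ (so no appeal to Corollary \ref{projbasis} is needed). Writing $K = K^{B^K}$ and using $\ol{K} = \bigcap_{\alpha \in B^K} \ol{\alpha^+}$, I compute $\ol{K}\cap H = \ol{K}\cap\alpha_1^\perp = \{x \in H \mid \alpha_i(x) \geq 0,\ i \geq 2\} =: F$, a closed simplicial cone in $H$ with basis $B$ whose relative interior is exactly the set $K' := \{x \in H \mid \alpha(x) > 0 \text{ for all } \alpha \in B\}$ of iii). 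Granting that $K'$ is a chamber of $\AAA^H$, part ii) is immediate from the simplicial-cone structure of $F$ (Remark \ref{rem:scs}): for $\alpha = \alpha_i|_H$ one has $\alpha^\perp\cap K' = \emptyset$ because $\alpha_i > 0$ on $K'$, while $F \cap \alpha^\perp$ is the facet of $F$ obtained by deleting $\alpha$ from $B$ and hence spans the hyperplane $\alpha^\perp$ of $H$; and part i) follows since $\ol{K}\cap H = F = \ol{K'}$, with uniqueness because any chamber whose closure is $F$ must equal its relative interior $K'$.

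The heart of the argument is therefore to prove $K' \in \KKK^H$, for which I would verify three things. First, $K' \subseteq T \cap H$: thinness provides the chamber $L$ adjacent to $K$ across $H$, with $\ol{K}\cap\ol{L} = F$; for $x$ in the relative interior of $F$ and a direction $v$ with $\alpha_1(v) > 0$ I can take $y = x + \varepsilon v \in K$ and $z = x - \varepsilon v \in L$ for small $\varepsilon$ (the strict inequalities defining $K$, resp.\ $L$, survive the perturbation, the wall form of $L$ at $H$ being a negative multiple of $\alpha_1$), so that $x \in [y,z] \subseteq T$ by convexity of $T$; hence $K' \subseteq T$. Second, $K'$ meets no $G = H'\cap H \in \AAA^H$: since $K$ is a chamber, $K$ lies in $D_{H'}(K)$, the open side of $H'$ containing $K$, and so $\ol{K}$ lies in the closed half-space $\ol{D_{H'}(K)}$; if some $x \in K'\cap H'$ existed, then $G$ would be a hyperplane of $H$ through the relatively interior point $x$ of $F$, so the open set $K'$ would contain points strictly on the side of $H'$ opposite to $K$, contradicting $K' \subseteq \ol{K} \subseteq \ol{D_{H'}(K)}$. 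Thus $K'$ is a connected open subset of $T\cap H$ disjoint from every hyperplane of $\AAA^H$, so $K' \subseteq K''$ for a (convex) chamber $K'' \in \KKK^H$.

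It remains to upgrade this inclusion to equality, which I expect to be the main obstacle. I would argue by an exit-point computation: take $p \in K'$ and any $w \in K''$; by convexity $[p,w]\subseteq K''$. If $w \notin \ol{K'} = F$, then the segment leaves $F$ at a relative boundary point $q$, so $\alpha_i(q) = 0$ for some $i \geq 2$ while $q \in \ol{K}\cap H$ and $q \in K'' \subseteq T$. Then $q \in \alpha_i^\perp\cap H\cap T$, whence $\alpha_i^\perp\cap H \in \AAA^H$; but $q \in K''$ cannot lie on a hyperplane of $\AAA^H$, a contradiction. Hence $K'' \subseteq F$, and since $K''$ is open this forces $K'' \subseteq \mathrm{int}_H F = K'$, giving $K' = K''$. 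The delicate point throughout is bookkeeping the difference between intersecting $\alpha_i^\perp$ inside $V$ versus inside $H$, and making sure the crossing hyperplane actually qualifies for membership in $\AAA^H$ — which is precisely where the fact that the exit point lies in $T$, established in the first step, is used.
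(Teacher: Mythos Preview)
The paper does not prove this lemma: it is cited from \cite[Lemma 4.15]{CMW} and stated without argument, so there is nothing in the present paper to compare against.

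Your proof is correct. Reducing everything to the claim $K' \in \KKK^H$ and then verifying it in three steps --- $K' \subset T\cap H$ via thinness and convexity of $T$, disjointness of $K'$ from each $G \in \AAA^H$ via $\ol{K} \subset \ol{D_{H'}(K)}$, and maximality via the exit-point argument --- is the natural route, and your exit-point step correctly uses that $q \in K'' \subset T$ to force $\alpha_i^\perp \cap H \in \AAA^H$ by the very definition of $\AAA^H$. The only spot that could use one extra sentence is the identification $\ol{K}\cap\ol{L} = F$: this is the standard fact that in a thin chamber complex two adjacent chambers share exactly their common panel, but since your placement of $z = x - \varepsilon v$ inside $L$ relies on $x$ being relatively interior in the facet of $\ol{L}$ as well, it is worth saying so explicitly.
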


\begin{prop}
Let $(\AAA,T)$ be a $2$-spherical crystallographic Tits arrangement with respect to $R$ and $H \in \AAA$. Let $K \in \KKK$ such that $H \in W^K$ and $K' \in \KKK^H$ such that $\ol{K'} = \ol{K} \cap H$. Then 
\begin{enumerate}[label=\roman*)]
	\item $W^{K'} = \{\alpha^\perp \ |\ \alpha \in \pi_H^\ast (B^K) \setminus \{0\}\}$,
	\item For $\alpha \in \pi_H^\ast (B^K) \setminus \{0\}$ we have that $\beta \in R^H \cap \langle \alpha \rangle$
	implies $\beta = \lambda \alpha$, $\lambda \in \ZZ$,
	\item $\pi_H^\ast (B^K) \setminus \{0\} \subset (R^H)^{red}$,
	\item $\pi_H^\ast (B^K) \setminus \{0\} = B^{K'}$.
\end{enumerate}
\label{Rhbasic}
\end{prop}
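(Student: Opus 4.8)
The plan is to prove the four assertions in the order i), iii), ii), iv), since iv) depends on all the others and the integrality in ii) is a refinement of the primitivity in iii). Throughout I would normalize the indexing so that $B^K=\{\alpha_1,\dots,\alpha_r\}$ with $\alpha_1^\perp=H$, and abbreviate $\bar\alpha_i:=\pi_H^\ast(\alpha_i)$. Since $\ker\pi_H^\ast$ is exactly the annihilator $\langle\alpha_1\rangle$ of $H$ in $V^\ast$, only $\alpha_1$ maps to $0$, so $B:=\pi_H^\ast(B^K)\setminus\{0\}=\{\bar\alpha_2,\dots,\bar\alpha_r\}$ has $r-1$ elements, and $\bar\alpha_i^\perp$ (the kernel of $\bar\alpha_i$ inside $H$) equals $\alpha_1^\perp\cap\alpha_i^\perp$.

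For i), I would first invoke Lemma \ref{indbases}(ii): for each $i\geq 2$ the hyperplane $\bar\alpha_i^\perp$ of $H$ satisfies $\langle\ol{K'}\cap\bar\alpha_i^\perp\rangle=\bar\alpha_i^\perp$ and $\bar\alpha_i^\perp\cap K'=\emptyset$, so it is a wall of $K'$; hence $\{\bar\alpha_i^\perp\mid i\geq2\}\subseteq W^{K'}$. By Corollary \ref{projbasis} the set $B$ is a basis of $H^\ast$, so these $r-1$ hyperplanes are pairwise distinct. As $K'$ is an open simplicial cone in $H\cong\RR^{r-1}$ (by Proposition \ref{indarrange}), it has exactly $r-1$ walls, which forces equality $W^{K'}=\{\bar\alpha_i^\perp\mid i\geq 2\}$. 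Note that here $2$-sphericity enters: each $\bar\alpha_i^\perp=\alpha_1^\perp\cap\alpha_i^\perp$ contains the codimension-$2$ face $\ol{K}\cap\alpha_1^\perp\cap\alpha_i^\perp$, which meets $T$, so $\bar\alpha_i^\perp$ genuinely lies in $\AAA^H$ and $K'$ is a thin chamber of the restriction.

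The technical heart is a single lifting computation that yields both ii) and iii). Given $\beta\in R^H\cap\langle\bar\alpha_i\rangle$, write $\beta=c\bar\alpha_i$ and pick $\delta\in R$ with $\pi_H^\ast(\delta)=\beta$. Then $\delta-c\alpha_i$ vanishes on $H$, so $\delta=c\alpha_i+d\alpha_1$ for some $d\in\RR$; the crystallographic property writes $\delta$ as an integral combination of $B^K$, and comparing coefficients in the basis $B^K$ gives $c\in\ZZ$. This is exactly ii). For iii), I would first show $\bar\alpha_i\in R^H$: by $2$-sphericity the face $\ol{K}\cap\alpha_1^\perp\cap\alpha_i^\perp$ meets $T$, so $\bar\alpha_i^\perp\cap H\cap T\neq\emptyset$ and $\bar\alpha_i=\pi_H^\ast(\alpha_i)$ survives into $R^H$; the same coefficient computation shows that every root of $R^H$ on the line $\langle\bar\alpha_i\rangle$ is an integer multiple of $\bar\alpha_i$, so no proper fraction $\tfrac1n\bar\alpha_i$ is a root and $\bar\alpha_i$ is primitive, i.e. $\bar\alpha_i\in(R^H)^{red}$.

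Finally, for iv) I would assemble the pieces. By iii) each $\bar\alpha_i$ lies in $(R^H)^{red}\subseteq R^H$, by i) its kernel is a wall of $K'$, and by Lemma \ref{indbases}(iii) it is positive on $K'$, so $B\subseteq B^{K'}$. Conversely, any $\gamma\in B^{K'}$ lies on some wall $\bar\alpha_i^\perp$ and is positive on $K'$, hence $\gamma\in\langle\bar\alpha_i\rangle$ with $\gamma=\lambda\bar\alpha_i$, $\lambda\in\ZZ_{>0}$ by ii); since root bases consist of reduced roots, $\gamma\in(R^H)^{red}$ is primitive, and as $\bar\alpha_i\in R^H$ this forces $\lambda=1$, giving $\gamma=\bar\alpha_i$ and $B^{K'}\subseteq B$. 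I expect this last step to be the main obstacle: because $R^H$ need not be reduced (as the $F_4$-type examples indicate), one must use the integrality of ii) together with the primitivity of iii) to exclude that $B^{K'}$ contains a proper multiple $\lambda\bar\alpha_i$ on one of the walls; the sphericity hypothesis is what guarantees the relevant codimension-$2$ faces reach $T$ so that $\bar\alpha_i$ genuinely belongs to $R^H$ in the first place.
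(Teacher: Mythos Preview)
Your proof is correct and, for parts i), iii), iv), follows essentially the same path as the paper (invoking Lemma~\ref{indbases}, Corollary~\ref{projbasis}, and the count of walls of a simplicial cone). The one place where you genuinely diverge is ii). The paper localizes: it picks $x\in\alpha_i^\perp\cap H\cap T$ (which exists by $2$-sphericity), passes to the parabolic subarrangement $(\AAA_x,T_x)$, uses that this is crystallographic by Corollary~\ref{cryst:globtoloc}, and then expands a preimage $\beta$ in the partial basis $B^K_x$ before projecting. Your argument is more direct: you expand a preimage $\delta\in R$ of $\beta$ in the full basis $B^K$ using the global crystallographic hypothesis, project by $\pi_H^\ast$, and read off $c\in\ZZ$ from the linear independence of the $\bar\alpha_j$. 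This is shorter, avoids the detour through parabolic subarrangements, and in fact does not use $2$-sphericity for ii) at all (you only need it in iii) to ensure $\bar\alpha_i^\perp\cap H\cap T\neq\emptyset$, i.e.\ that $\bar\alpha_i$ actually lies in $R^H$). The paper's route has the minor advantage of making explicit where each root $\beta\in R_\alpha$ lives in the local picture at $x$, but your computation gives the same conclusion with less machinery.

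One small remark on presentation: your opening sentence announces the order i), iii), ii), iv), but your actual argument does i), ii), iii), iv); since iii) uses ii) in your version (primitivity of $\bar\alpha_i$ is deduced from the integrality statement), the latter ordering is the correct one.
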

\begin{proof}
Assertion i) is an immediate consequence of Lemma \ref{indbases} ii).

To check ii), let $\alpha_1 \in B^K$ such that $\alpha_1^\perp = H$, let $\alpha \in R$ and $R_\alpha := \{\beta \in R \ |\ \beta^\perp \cap H = \alpha^\perp \cap H \} = \{\beta \in R\ |\ \pi_H^\ast(\beta) \in \langle \pi_H^\ast(\alpha) \rangle \setminus \{0\}\}$ as in the proof of \cite[Proposition 4.16]{CMW}. Since $(\AAA,T)$ is 2-spherical, there exists $x \in \alpha^\perp \cap H \cap T$. Consider the arrangement $(\AAA_x, T_x)$. This is a crystallographic arrangement with respect to $R_x$ by Proposition \ref{cryst:globtoloc}, as $(\AAA,T)$ is crystallographic with respect to $R$. In this arrangement consider the linearly independent set $B^{K}_x$, then $\alpha_1, \alpha \in B^{K}_x$. Let $B^{K}_x = \{\alpha_1, \alpha, \tau_3, \dots, \tau_m\}$ for some $m \in \NN$. Now for an arbitrary $\beta \in R_\alpha$, we know $\beta \in R_x$ since $x \in \alpha^\perp \cap H = \beta^\perp \cap H$. Therefore $\beta = \lambda_1 \alpha_1 + \lambda_2 \alpha + \sum_{i=3}^m \lambda_i \tau_i$ with $\lambda_i \in \ZZ$ for $i = 1, 2, \dots, m$ either all positive or negative.

By Corollary \ref{projbasis}, $\pi_H^\ast(B^K) \setminus \{0\}$ is a linearly independent set.
Applying $\pi_H^\ast$ yields
$$\pi_H^\ast(\beta) = \lambda_2 \pi_H^\ast(\alpha) + \sum_{i=3}^m \lambda_i \pi_H^\ast(\tau_i),$$
and thus by the choice of $\beta$ we get $\pi_H^\ast(\beta) = \lambda_2 \pi_H^\ast(\alpha)$ with $\lambda_2 \in \ZZ$, as desired. This shows ii).

Assertion iii) is a consequence of ii), with respect to the standard reductor, which also exists due to ii). Finally, iv) is immediate from iii) and Lemma \ref{indbases}.
\end{proof}

\begin{prop}
Let $(\AAA,T)$ be a $2$-spherical crystallographic Tits arrangement with respect to $R$ and $H \in \AAA$. Then $(\AAA^H, T \cap H)$ is a crystallographic Tits arrangement with respect to $(R^H)^{red}$.
\label{indrootsys}
\end{prop}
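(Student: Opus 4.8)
The plan is to assemble the statement from Proposition \ref{indarrange} and Proposition \ref{Rhbasic}, the only genuinely new input being the crystallographic inequality for the reduced system. First I would establish that $(\AAA^H, T\cap H)$ is a Tits arrangement. Since $(\AAA,T)$ is $2$-spherical, Proposition \ref{indarrange} already gives that $(\AAA^H, T\cap H)$ is a $1$-spherical simplicial arrangement, so it remains to prove thinness. Fix a chamber $K' \in \KKK^H$; by the restriction theory of \cite{CMW} it is of the form $\ol{K'} = \ol K \cap H$ for some $K \in \KKK$ with $H \in W^K$. Proposition \ref{Rhbasic} i) and iv) then identify $W^{K'} = \{\alpha^\perp \mid \alpha \in \pi_H^\ast(B^K)\setminus\{0\}\}$ and $B^{K'} = \pi_H^\ast(B^K)\setminus\{0\}$. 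Each wall $\alpha^\perp$ of $K'$ equals $\beta^\perp \cap H$ for the element $\beta \in B^K$ with $\pi_H^\ast(\beta) = \alpha$, and as $\beta^\perp \in \AAA$ meets $T$ along the facet $\ol{K'}\cap\alpha^\perp$, which lies in $T$ by $1$-sphericity, we get $\alpha^\perp \in \AAA^H$. Hence $W^{K'}\subseteq\AAA^H$ for every chamber, $(\AAA^H, T\cap H)$ is thin, and by Proposition \ref{indarrange} it is a Tits arrangement associated to $R^H$; since $(R^H)^{red}$ defines the same hyperplanes and is reduced, it is likewise a reduced root system associated to $(\AAA^H, T\cap H)$.

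The heart of the argument is then the crystallographic inequality. Fix a chamber $K'$ with $B^{K'}=\pi_H^\ast(B^K)\setminus\{0\}$ as above, and let $\alpha_1\in B^K$ be the root with $\alpha_1^\perp = H$. Since $\pi_H^\ast$ annihilates exactly $\langle\alpha_1\rangle$ and $B^{K'}$ is a basis of $H^\ast$ by Corollary \ref{projbasis}, the restriction of $\pi_H^\ast$ to $B^K\setminus\{\alpha_1\}$ is a bijection onto $B^{K'}$. Now take any $\beta\in R$. Because $(\AAA,T)$ is crystallographic with respect to $R$, Definition \ref{defcry} gives $\beta = \pm\sum_{\alpha\in B^K} n_\alpha\alpha$ with all $n_\alpha\in\NN_0$ and a single global sign. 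Applying $\pi_H^\ast$ kills the $\alpha_1$-term and yields
$$\pi_H^\ast(\beta)=\pm\sum_{\delta\in B^{K'}} n_\delta\,\delta,\qquad n_\delta\in\NN_0,$$
so $\pi_H^\ast(\beta)\in\pm\sum_{\delta\in B^{K'}}\NN_0\delta$. As every element of $R^H$ is of the form $\pi_H^\ast(\beta)$ for some $\beta\in R$, this shows $R^H\subseteq\pm\sum_{\delta\in B^{K'}}\NN_0\delta$, and since $(R^H)^{red}\subseteq R^H$ consists of the distinguished generators of the root lines by Proposition \ref{Rhbasic} ii) and iii), the same inequality holds for $(R^H)^{red}$. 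As $K'$ was arbitrary, $(\AAA^H, T\cap H)$ is crystallographic with respect to $(R^H)^{red}$ in the sense of Definition \ref{defcry}.

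I expect the main obstacle to be bookkeeping rather than a single hard step: one must ensure that the $\NN_0$-expansion over $B^K$ really descends to an $\NN_0$-expansion over the \emph{genuine} basis $B^{K'}$, which is exactly why passing to the reduced system is necessary, for a non-reduced $R^H$ the putative root basis would not be a basis and Definition \ref{defcry} would not even be meaningful. The two facts requiring care are that every chamber of $\AAA^H$ arises as $\ol K\cap H$, so that Proposition \ref{Rhbasic} applies uniformly and the crystallographic condition is checked at each chamber, and that $(R^H)^{red}$ is genuinely a subset of $R^H$, i.e.\ the reductor selects the shortest root present on each line; both are supplied by the restriction theory of \cite{CMW} together with Proposition \ref{Rhbasic} ii) and iii).
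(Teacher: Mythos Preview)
Your proposal is correct and follows essentially the same route as the paper: the core step in both is to expand an arbitrary $\beta\in R$ as an integral combination of $B^K$ and apply $\pi_H^\ast$, killing the $\alpha_1$-coefficient and producing an $\NN_0$-combination over $B^{K'}=\pi_H^\ast(B^K)\setminus\{0\}$. You are more explicit than the paper about two points it leaves implicit, namely the verification of thinness (which you derive from $1$-sphericity of the restriction, equivalently $2$-sphericity upstairs) and the passage from $R^H$ to $(R^H)^{red}$ via Proposition~\ref{Rhbasic} ii), iii); both additions are sound and improve the exposition without changing the argument.
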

\begin{proof}
Let $H = \alpha_1^\perp$ for some $\alpha_1 \in R^{red}$. Let $K' \in \KKK_H$ and let $K \in \KKK$ such that $\overline{K'} \subset \overline{K}$. By Proposition \ref{Rhbasic} iv) we know $B^{K} = \{\alpha_1, \dots, \alpha_r\}$ with $B^{K'}= \{\pi_H^\ast(\alpha_i) \mid i=2, \dots r\}$. Take $\beta_H \in R^H$, $\beta \in R$ such that $\pi_H^\ast(\beta) = \beta_H$. Since $(A,T,R)$ is crystallographic, $\beta = \sum_{i=1}^{n} \lambda_i \alpha_i$ with $\lambda_i \in \ZZ$. Therefore we get
$\pi_H^\ast(\beta) = \sum_{i=1}^{r} \lambda_i \pi_H^\ast(\alpha_i) = \sum_{i=2}^{n} \lambda_i \pi_H^\ast(\alpha_i)$,
so $R^H$ is indeed crystallographic.
\end{proof}

\begin{rem}
\begin{enumerate}
	\item In the case where $(\AAA,T)$ is not $2$-spherical, the restriction $(\AAA^H, T \cap H)$ might be thin nonetheless, and in this case $(A^H,T \cap H)$ is again a crystallographic Tits arrangement with respect to $R^H$.
	\item Proposition \ref{Rhbasic} also yields that to obtain a reduced root system for $(R^H)$ it is sufficient to consider the $\pi_H^\ast(B^K)$ for all chambers $K$ with $H \in W^K$. In other words,
$$(R^H)^{red} = \bigcup_{H \in W^K} \pi_H^\ast(B^K) \setminus \{0\}.
$$
\end{enumerate}

\end{rem}

\begin{exmp}
\label{exmpF4}
The property of being reduced is not inherited by $R^H$. Also, if $R$ is a root system, $R^H$ constructed in the way above does not need to be a root system as well, as the following example shows. Take the root system of $\tilde{F_4}$ (which certainly is reduced). Let $\iota: \RR^4 \to (\RR^4)^\ast, v \mapsto (v,\cdot)$ be the standard isomorphism, where $(\cdot,\cdot)$ is the standard inner product. Note that $\iota$ takes the standard basis $\{e_1, e_2, e_3, e_4\}$ to its dual $\{e_1^\vee, e_2^\vee,e_3^\vee,e_4^\vee\}$. We will denote elements of $(\RR^4)^\ast$ as vectors with respect to this basis.

A set of simple roots for $F_4$ in $\RR^4$ is for example (cp.\ \cite{Bo02}):
$$\left\{\varphi_1=(0,1,-1,0),
\varphi_2=(0,0,1,-1),
\varphi_3=(0,0,0,1),
\varphi_4=\frac{1}{2}(1,-1,-1,-1)\right\}.$$
Then the whole root system $R(F_4)$ can be described as $\iota$ of
\begin{enumerate}[label=\roman*)]
	\item vectors with two components $1$ or $-1$, $0$ otherwise,
	\item vectors with one component $1$ or $-1$, 0 otherwise,
	\item vectors with all four components $\frac{1}{2}$ or $-\frac{1}{2}$.
\end{enumerate}
So there are 24 roots of type i), 8 of type ii) and 16 of type iii). We will compute orthogonal projections of $R(F_4)$ on two simple roots. For $1 \leq i \neq j \leq 4$ let $\pi_{ij}: (\RR^4)^\ast \to (\varphi_i^\perp \cap \varphi_j^\perp)^\ast$ denote the respective restriction. The respective projections are:
\begin{align*}
\pi_{12}(R(F_4)) =
 &\left\{ \pm 
(1,0,0,0),\pm
(0,\frac{1}{3},\frac{1}{3},\frac{1}{3}),\pm
(1,\frac{1}{3},\frac{1}{3},\frac{1}{3}), \pm
(-1,\frac{1}{3},\frac{1}{3},\frac{1}{3}),
\pm
(\frac{1}{2},\frac{1}{2},\frac{1}{2},\frac{1}{2}), \right.\\
&\left. \pm 
(-\frac{1}{2},\frac{1}{2},\frac{1}{2},\frac{1}{2}),  \pm
(0,\frac{2}{3},\frac{2}{3},\frac{2}{3}), \pm
(\frac{1}{2},\frac{1}{6},\frac{1}{6},\frac{1}{6}), \pm
(-\frac{1}{2},\frac{1}{6},\frac{1}{6},\frac{1}{6}), 0
\right\},
%G_2
\\
\pi_{13}(R(F_4)) =
 &\left\{ \pm 
(1,0,0,0),\pm 
(0,1,1,0),\pm
(0,\frac{1}{2},\frac{1}{2},0),\pm
(1,\frac{1}{2},\frac{1}{2},0), \pm
(-1,\frac{1}{2},\frac{1}{2},0),
\right.\\&\left.
\pm
(\frac{1}{2},\frac{1}{2},\frac{1}{2},0),
\pm (-\frac{1}{2},\frac{1}{2},\frac{1}{2},0),\pm 
(-\frac{1}{2},0,0,0),  0
\right\},
%(1,2,2,2,1,4)
%\end{align*}
\\
%\begin{align*}
\pi_{14}(R(F_4)) =
 &\left\{
\pm (\frac{3}{4},\frac{1}{4},\frac{1}{4},\frac{1}{4}),
\pm (\frac{1}{4},\frac{1}{4},\frac{1}{4},-\frac{1}{4}),
\pm (\frac{1}{4},-\frac{1}{4},-\frac{1}{4},\frac{3}{4}),
\pm (1,\frac{1}{2},\frac{1}{2},0),
\pm (\frac{1}{2},0,0,\frac{1}{2}), \right.
\\
&\left.
\pm (\frac{1}{2},\frac{1}{2},\frac{1}{2},-\frac{1}{2}),
\pm (1,0,0,1),
\pm (0,\frac{1}{2},\frac{1}{2},-1)
,0 \right\},
%(1,2,2,2,1,4)
%\end{align*}
\\
%\begin{align*}
\pi_{23}(R(F_4)) =
 &\left\{ \pm 
(1,0,0,0),\pm
(0,1,0,0),\pm
(1,1,0,0),
% \right.\\&\left.
\pm
(1,-1,0,0), \pm
(\frac{1}{2},\frac{1}{2},0,0),
%\right.,&\left.
\pm
(\frac{1}{2},-\frac{1}{2},0,0), 0
\right\}, %B_2
\end{align*}
% \\
\begin{align*}
\pi_{24}(R(F_4)) =
 &\left\{ \pm 
(\frac{1}{2},-\frac{1}{2},0,0),\pm 
(0,0,\frac{1}{2},\frac{1}{2}), \pm 
(1,-1,0,0), 
\right.\\&\left.
\pm 
(\frac{1}{2},-\frac{1}{2},\frac{1}{2},\frac{1}{2}), \pm 
(\frac{1}{2},-\frac{1}{2},-\frac{1}{2},-\frac{1}{2}), \pm 
(0,0,1,1), 
0\right\}, %B_2
\\
\pi_{34}(R(F_4)) = &\left\{\pm 
(\frac{2}{3},-\frac{1}{3},-\frac{1}{3},0),\pm 
(-\frac{1}{3},\frac{2}{3},-\frac{1}{3},0), \pm 
(-\frac{1}{3},-\frac{1}{3},\frac{2}{3},0), 
\right.\\&\left.
\pm 
(1,-1,0,0), \pm 
(1,0,-1,0), \pm 
(0,1,-1,0), 0
\right\}. %G_2
\end{align*}
Let $R_{ij}:=\pi_{ij}(R(F_4)) \setminus \{0\}$. These are non-reduced crystallographic rank two root systems. After reducing, consider the images of the two remaining elements of the simple roots. Writing the roots as linear combinations of the two yields:
\begin{itemize}
	\item $R_{23}$ and $R_{24}$ are combinatorially equivalent to $B_2$.
	\item $R_{12}$ and $R_{34}$ are combinatorially equivalent to $G_2$.
	\item $R_{13}$ and $R_{14}$ are combinatorially equivalent to $R(1,2,2,2,1,4)$.
\end{itemize}
Here $R(1,2,2,2,1,4)$ denotes the rank two root system associated to the sequence $(1,2,2,2,1,4)$ according to the classification of spherical rank two Weyl groupoids \cite{CH09}.
\end{exmp}

% \bibliographystyle{amsplain}
% \bibliography{refs_sc}

\providecommand{\bysame}{\leavevmode\hbox to3em{\hrulefill}\thinspace}
\providecommand{\MR}{\relax\ifhmode\unskip\space\fi MR }
% \MRhref is called by the amsart/book/proc definition of \MR.
\providecommand{\MRhref}[2]{%
  \href{http://www.ams.org/mathscinet-getitem?mr=#1}{#2}
}
\providecommand{\href}[2]{#2}

\end{document}